\documentclass[11pt]{article}

\usepackage[T1]{fontenc}
\usepackage[utf8]{inputenc}
\usepackage{lmodern}
\usepackage{microtype}
\usepackage{amsmath,amssymb,amsthm,mathtools}
\usepackage{enumitem}
\usepackage[affil-it]{authblk}
\usepackage[margin=1in]{geometry}
\usepackage[colorlinks=true,linkcolor=blue,citecolor=blue,urlcolor=blue]{hyperref}

\allowdisplaybreaks
\numberwithin{equation}{section}
\setlist[itemize]{leftmargin=2em,itemsep=2pt,topsep=4pt}
\setlist[enumerate]{leftmargin=2.4em,itemsep=2pt,topsep=4pt}

\newtheorem{theorem}{Theorem}[section]
\newtheorem{conjecture}[theorem]{Conjecture}
\newtheorem{lemma}[theorem]{Lemma}

\newtheorem{corollary}[theorem]{Corollary}
\theoremstyle{remark}

\DeclareMathOperator{\cost}{cost}
\newcommand{\e}{\varepsilon}
\newcommand{\R}{\mathbb R}
\newcommand{\dotcupunion}{\mathbin{\dot\cup}}
\newcommand{\CC}{\mathcal{CC}}
\newcommand{\CCstar}{\mathcal{CC}^{*}}
\newcommand{\CD}{\mathcal{CD}}
\newcommand{\CDstar}{\mathcal{CD}^{*}}
\newcommand{\ed}{d_{\mathrm{edit}}}

\newcounter{proofcase}

\newcommand{\proofcase}[2]{
  \refstepcounter{proofcase}
  \par\medskip
  \noindent\textbf{Case \theproofcase: #2.}
  \label{#1}
  \par\smallskip
}

\usepackage[backend=biber, style=numeric, doi=false, url=false, isbn=false, maxbibnames=99]{biblatex}
\title{Clique decompositions and covers for large graphs}
\author{Wentao Zhang\thanks{Email: {\tt wt-zhang@mail.tsinghua.edu.cn}.}}
\affil{Yau Mathematical Sciences Center, Tsinghua University, Beijing, 100084, China}
\date{\today}

\begin{document}
\maketitle

\begin{abstract}
In 1966, Erd\H{o}s, Goodman, and P{\'o}sa showed that every \(n\)-vertex graph \(G\) admits a cover of its edges using at most \(\lfloor \frac{n^2}{4}\rfloor\) cliques, with tightness witnessed by the balanced complete bipartite graph.
Erd\H{o}s suggested the following strengthening: every \(n\)-vertex graph \(G\) admits an edge decomposition into cliques with total cost at most \(\lfloor \frac{n^2}{4}\rfloor\), where each \(i\)-clique has cost \(i-1\).
There is another generalization conjectured by Dau, Milenkovic and Puleo: for every \(t\ge4\), every \(n\)-vertex graph \(G\) admits a cover of its \(t\)-cliques using at most \(\prod_{j=0}^{t-1}\left\lfloor\frac{n+j}{t}\right\rfloor\) cliques.
Balogh, He, Krueger, Nguyen and Wigal proved asymptotic and fractional versions of these conjectures. We solve both conjectures for large \(n\) and identify the equality cases.
Our proof of Erd\H{o}s's conjecture combines linear programming duality and the spanning-forest polytope with a weighted stability form of Mantel's theorem, followed by explicit decomposition algorithms around an almost balanced bipartition.
For the \(t\)-clique-cover conjecture, we combine graph removal lemma and generalized Tur\'an stability with an exact covering construction for graphs close to \(T_{n,t}\).
\end{abstract}

\section{Introduction}

A \emph{\(t\)-clique} \(K_t\) is a complete graph on \(t\) vertices. A \emph{\(t\)-clique decomposition(cover)}
is a family of cliques such that every \(t\)-clique in \(G\) is a subgraph of exactly one(at least one) clique from that family.
A \emph{cost vector} \(\mathbf{c}\) is a sequence of nonnegative real numbers \(\mathbf{c}=\left(c_i\right)_{i=1}^\infty\),
each \(K_i\) would cost \(c_i\). Let \(\CD_t(G,\mathbf c)\)(\(\CC_t(G,\mathbf c)\)) denote the
minimum cost among all \(t\)-clique decompositions(covers) of \(G\) under cost vector \(\mathbf c\).
When \(\mathbf c\) is the constant \(1\) vector, we write \(\CD_t(G,\mathbf 1)=\CD_t(G)\)(\(\CC_t(G,\mathbf 1)=\CC_t(G)\)).

We study the following extremal problem: what is the maximum value of \(\CD_t(G,\mathbf c)\)(\(\CC_t(G,\mathbf c)\))
over all \(n\)-vertex graphs \(G\)? The first result on this topic was by Erd{\H o}s, Goodman, and P{\'o}sa~\cite{EGP}:

\begin{theorem}[Erd{\H o}s, Goodman, and P{\'o}sa~\cite{EGP}]\label{thm:EGP}
  If \(G\) is an \(n\)-vertex graph, then \(\CC_2(G) \le \lfloor n^2/4\rfloor\), and equality holds
  if and only if \(G\) is a balanced complete bipartite graph. Furthermore, if \(G\) is an \(n\)-vertex graph,
  then \(\CD_2(G,\mathbf{c}) \le \lfloor n^2/4\rfloor\), where \(c_2 = c_3 = 1\) and \(c_i = \infty\) for all \(i \ge 4\).
\end{theorem}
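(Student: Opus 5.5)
We prove the decomposition statement by induction on \(n\); the cover bound then follows because a decomposition is a cover. So the plan is to show that every \(n\)-vertex graph \(G\) has an edge decomposition into edges and triangles with at most \(\lfloor n^2/4\rfloor\) pieces.

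The base cases \(n\le 2\) are trivial. For the inductive step we remove two vertices, following the classical Erd\H{o}s--Goodman--P\'osa argument. Two cases arise.

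\emph{Case of an edge \(uv\).} Suppose \(G\) has an edge \(uv\). Let \(G'=G-\{u,v\}\), which by induction decomposes into at most \(\lfloor (n-2)^2/4\rfloor\) pieces. Since
\[
\lfloor n^2/4\rfloor-\lfloor (n-2)^2/4\rfloor=n-1,
\]
it suffices to cover all edges incident to \(u\) or \(v\) using at most \(n-1\) pieces. Let \(W\subseteq V(G')\) be the common neighbors of \(u\) and \(v\). Pick a single \(w\in W\) and use the triangle \(uvw\). For every other \(w'\in W\), use the two edges \(uw'\) and \(vw'\). For a vertex adjacent to exactly one of \(u,v\), use that one edge.

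This naive count can exceed \(n-1\): it gives \(1+2(|W|-1)+(\text{one-sided})\). The fix is to use triangles inside the common neighbourhood only when we can \emph{remove} edges from \(G'\) first. The cleaner route is the classical choice of vertices.

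\emph{Case of \(\delta(G)\).} If \(\delta(G)\le \lfloor n/2\rfloor\), delete a vertex \(x\) of minimum degree. The single edges at \(x\) number \(\deg x\le\lfloor n/2\rfloor\), and
\[
\lfloor n^2/4\rfloor-\lfloor (n-1)^2/4\rfloor=\lfloor n/2\rfloor,
\]
so induction applied to \(G-x\) closes this case.

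Otherwise \(\delta(G)>n/2\), so every edge \(uv\) lies in a triangle. Here the plan is the standard trick. Take a maximum matching \(M\) between \(N(x)\setminus N(y)\) and the edges available to pair them into triangles. More concretely, pick a vertex \(x\) and pair its neighbours \(y\) along a maximum matching \(M\) in \(G[N(x)]\). Replace the edges \(xy,xy'\) and the matching edge \(yy'\) by the triangle \(xyy'\). Then delete \(x\) together with the matched edges \(yy'\) from the graph. The resulting graph \(G''\) on \(n-1\) vertices still satisfies the inductive bound (deleting edges only helps, since the statement is monotone: a decomposition of a supergraph does not directly restrict). This is the delicate point.

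Monotonicity fails for decompositions. So instead I would strengthen the induction to a weighted count: \emph{the number of pieces is at most \(\lfloor n^2/4\rfloor\) minus nothing}, and directly bound the number of pieces as
\[
|E(G)|-2t,
\]
where \(t\) is the size of a family of edge-disjoint triangles. It then suffices to show that every graph with \(m\) edges has at least \(m-\lfloor n^2/4\rfloor\) edge-disjoint triangles. This is the Gy\H{o}ri-type statement, and it is the main obstacle. I would prove it by induction, deleting the minimum-degree vertex \(x\) with \(d=\deg x\). The key claim is that \(G[N(x)]\) has a matching of size \(\ge d-\lfloor n/2\rfloor\), which gives that many edge-disjoint triangles through \(x\).

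This matching claim follows from Tur\'an/Mantel-type counting. Suppose the maximum matching in \(G[N(x)]\) has size \(s\). By the Gallai--Edmonds bound, \(G[N(x)]\) then has few edges, while each \(y\in N(x)\) has at least \(d\) neighbours in total, of which at most \(n-d\) lie outside \(N(x)\). Hence \(\deg_{N(x)}(y)\ge 2d-n\), which forces a matching of size \(\ge\min(d/2,\,2d-n)\ge d-\lfloor n/2\rfloor\). The triangles found at \(x\) use only edges at \(x\) plus matching edges. Removing \(x\) and these matching edges and applying induction on the remaining graph requires checking that the removed matching edges were counted correctly, i.e. the inequality
\[
(m-d-s)-\lfloor (n-1)^2/4\rfloor+s\ge m-\lfloor n^2/4\rfloor,
\]
which reduces to \(d\le \lfloor n/2\rfloor+s\). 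This is exactly the matching claim.

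For the equality case for \(\CC_2\), we track when every inequality is tight. Tightness forces \(G\) to be triangle-free with \(\lfloor n^2/4\rfloor\) edges, and Mantel's theorem then gives \(K_{\lceil n/2\rceil,\lfloor n/2\rfloor}\).
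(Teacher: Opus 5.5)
The paper does not prove this theorem; it is quoted from Erd\H{o}s, Goodman and P\'osa. So your argument has to stand on its own, and as written its inductive step fails.

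\textbf{The upper bound.} A decomposition built from $t$ edge-disjoint triangles has $m-2t$ pieces, so what you need is $2t\ge m-\lfloor n^2/4\rfloor$. Your strengthened claim $t\ge m-\lfloor n^2/4\rfloor$ is false already for $K_4$: it asks for $2$ edge-disjoint triangles, but any two triangles of $K_4$ share an edge. Your final inequality is also mis-simplified. The $s$ cancels, leaving $d\le\lfloor n/2\rfloor$, so the matching is never used and the step breaks exactly when $\delta(G)>\lfloor n/2\rfloor$.

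The repair is the route you discarded. No monotonicity is needed, because the induction hypothesis bounds \emph{every} graph on $n-1$ vertices, in particular $G''$ (that is, $G-x$ with the $s$ matching edges removed). The pieces at $x$ are $s$ triangles and $d-2s$ edges, so the total is at most $\lfloor (n-1)^2/4\rfloor+d-s$. Showing $d-s\le\lfloor n/2\rfloor$ is exactly your matching claim. That claim is true, but the right tool is not Gallai--Edmonds. Use the elementary fact that a graph on $d$ vertices with minimum degree $\delta'$ has a matching of size at least $\min\{\delta',\lfloor d/2\rfloor\}$. Combine it with $2d-n\ge d-\lfloor n/2\rfloor$ (since $d\ge\lceil n/2\rceil$) and $\lfloor d/2\rfloor\ge d-\lfloor n/2\rfloor$ (since $d\le n-1$). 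With the correct target $2t\ge m-\lfloor n^2/4\rfloor$, your inequality then genuinely reduces to $d\le\lfloor n/2\rfloor+s$.

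\textbf{The equality case.} This is not proved either. Tightness of the edge/triangle count cannot force triangle-freeness: $K_4$ needs exactly $4=\lfloor 16/4\rfloor$ pieces, so that bound is attained by a graph with triangles. Since the equality statement concerns $\CC_2$, you must show directly that any graph containing a triangle has $\CC_2(G)\le\lfloor n^2/4\rfloor-1$. Then the equality case of Mantel's theorem handles triangle-free $G$.

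One way is to rerun the induction for covers, using $\CC_2(G)\le\CC_2(G-x)+d-s$; the triangles $xyy'$ may cover $yy'$ redundantly.
\begin{itemize}
\item If $d\le\lfloor n/2\rfloor$ and $x$ lies in a triangle, that triangle covers two edges at $x$ and saves one.
\item If $d\le\lfloor n/2\rfloor$ and $x$ lies in no triangle, the triangle of $G$ lies in $G-x$, so the strict inductive bound applies.
\item If $d>\lfloor n/2\rfloor$, then $\delta(G-x)\ge\lfloor n/2\rfloor$. By Mantel, $G-x$ contains a triangle (and the strict inductive bound applies) unless $n=2k+1$ and $G-x=K_{k,k}$.
\item In that last case, $\delta(G)\ge k+1$ forces $x$ to be adjacent to all of $G-x$. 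The $k^2$ triangles through $x$ then give $\CC_2(G)\le k^2<\lfloor n^2/4\rfloor$.
\end{itemize}
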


The condition \(c_i = \infty\) for all \(i \ge 4\) says we only need edges and triangles for the decomposition.
Katona and Tarj\'an conjectured the following generalization of Theorem~\ref{thm:EGP} for the cost function \(c_i=i\) for all \(i\):

\begin{theorem}[Gy\H{o}ri and Kostochka~\cite{GyoriKostochka}, Kahn~\cite{Kahnthm}, Chung~\cite{Chung}]\label{thm:GyKKC}
  Let \(c_i = i\) for all \(i\). If \(G\) is an \(n\)-vertex graph, then \(\CD_2(G, \mathbf{c}) \le 2 \cdot \lfloor n^2/4\rfloor\),
  and equality holds if and only if \(G\) is a balanced complete bipartite graph.
\end{theorem}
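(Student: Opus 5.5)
We prove Theorem~\ref{thm:GyKKC} by induction on \(n\), removing a suitable vertex or a pair of vertices and paying for the edges incident to it with cliques of total cost at most \(n\) (for one vertex) or \(2n-2\) (for a pair). These are exactly the increments of \(2\lfloor n^2/4\rfloor\): for a single vertex \(2(\lfloor n^2/4\rfloor-\lfloor (n-1)^2/4\rfloor)=2\lfloor n/2\rfloor\), which is \(n\) for even \(n\) but only \(n-1\) for odd \(n\), so removing a pair is often the more convenient step.

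The basic local tool is the following. Fix a vertex \(v\) with neighbourhood \(N(v)\), and take a maximum matching \(M\) in \(G[N(v)]\) together with the unmatched vertices \(U\). Each matching edge \(xy\in M\) gives a triangle \(vxy\) of cost \(3\), and each \(u\in U\) gives an edge \(vu\) of cost \(2\). These cliques cover all edges at \(v\) at total cost \(3|M|+2|U| = 2d(v)-|M|\). However, the triangles also use the edges \(xy\), which then have to be deleted from \(G-v\), and we must keep the family a decomposition. Deleting them is harmless for the induction: the hypothesis applies to every graph on \(n-1\) vertices, so we apply it to \(G-v-M\). Hence \(\CD_2(G,\mathbf c)\le 2d(v)-|M|+2\lfloor (n-1)^2/4\rfloor\), and it suffices to find \(v\) with \(2d(v)-\nu(G[N(v)])\le 2\lfloor n/2\rfloor\).

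If some vertex has \(d(v)\le \lfloor n/2\rfloor\), we are done immediately. Otherwise the minimum degree exceeds \(n/2\). Then every neighbourhood is large, with \(|N(v)|=d\), and every \(x\in N(v)\) has at least \(d+d(x)-n\ge 2d-n\) neighbours inside \(N(v)\). Since the minimum degree of \(G[N(v)]\) is at least \(2d-n\), that graph has a matching of size at least \(\min(2d-n,\lfloor d/2\rfloor)\). So \(2d-\nu\le 2d-(2d-n)=n\). This handles even \(n\), with the odd parity needing the extra unit. For odd \(n\) I plan to remove a suitable adjacent pair \(u,v\) instead and require cost \(\le 2n-2\). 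Handle the edge \(uv\) together with its common neighbourhood through matchings in \(G[N(u)\cap N(v)]\), or use \(K_4\)'s of cost \(4\) on \(u,v,x,y\) when \(xy\) is a common-neighbour edge (cost \(4\) covering \(5\) edges at the pair). The available cliques are \(K_2\), \(K_3\) and \(K_4\) at costs \(2,3,4\), so larger cliques give more saving per edge. The fine counting in this odd case, ensuring the saving is exactly one unit more, is where I expect the main obstacle. I would try choosing \(uv\) in a triangle of maximal common neighbourhood and use the bound \(|N(u)\cap N(v)|\ge 2\delta-n\ge 1\).

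For the equality statement, track when all inequalities are tight. This forces some vertex with \(\nu(G[N(v)])=0\) and \(d(v)=\lfloor n/2\rfloor\) at every step, so every neighbourhood is independent: \(G\) is triangle-free with \(e(G)=\lfloor n^2/4\rfloor\). Since a triangle-free graph needs cost exactly \(2e(G)\), Mantel's equality case shows that \(G\) is the balanced complete bipartite graph. Conversely, that graph attains the bound because it has no triangles.
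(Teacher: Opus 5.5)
Your proposal has genuine gaps: the even step fails for dense graphs, the odd step is not carried out, and the equality argument does not follow. (The paper does not prove this theorem; it only cites it. For large $n$ the inequality also follows from Theorem~\ref{dec:thm:main} via $\CD_2(G,(i))\le 2\CD_2(G,(i-1))$.) So your argument has to stand on its own.

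\textbf{The even step.} The bound $\delta(G[N(v)])\ge 2d-n$ requires $v$ to have minimum degree. It also only yields $\nu\ge\min\{2d-n,\lfloor d/2\rfloor\}$, yet you then use $\nu\ge 2d-n$, which fails once $d>2n/3$. In fact every cover of the edges at $v$ by triangles and single edges costs $2d-\nu\ge 3d/2$. This exceeds $n$ whenever the minimum degree is above $2n/3$; already for $K_4$ your step costs $5>4$ at $v$.

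The fix is to allow larger cliques at $v$:
\begin{itemize}
\item partition $N(v)$ into cliques $Q_1,\dots,Q_p$ of $G$;
\item use the cliques $\{v\}\cup Q_j$;
\item delete the edges inside the $Q_j$ from $G-v$ before applying induction.
\end{itemize}
This costs $d(v)+p$. If $v$ has minimum degree $\delta$, each $x\in N(v)$ has at most $n-1-\delta$ non-neighbours. A greedy colouring of $\overline{G}[N(v)]$ then gives $p\le\min\{\delta,n-\delta\}$, so the cost is at most $n$, which repairs the even case.

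\textbf{The odd step.} This is where the difficulty of the theorem lies, and for the pair step you only say what you would try, with no counting. The budget is $n-1$ per vertex, and with the plain induction hypothesis no single-vertex step meets it in general. For $G=K_n$, the cliques through $v$ in any decomposition are $\{v\}\cup Q_j$ with the $Q_j$ partitioning $N(v)$, so they cost $(n-1)+p\ge n$. Extra slack must therefore come from somewhere.

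For instance, suppose you carry strictness for non-extremal graphs through the induction. Write $n=2k+1$ and consider the case where the local cost is $n$. Then the reduced graph $G'$ on $2k$ vertices cannot be $K_{k,k}$: a vertex outside $N(v)\cup\{v\}$ keeps degree at least $\delta\ge k+1$, and if there is no such vertex then $G=K_n$ and $G'$ is edgeless. This gives the inequality. However, within this scheme the even-order strictness itself needs the odd-order equality case. At odd order with $\delta>n/2$ and local cost $n$, the same argument only returns $\CD_2(G)\le 2k^2+2k$, not the strict bound, so the real work remains.

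\textbf{The equality case.} In the branch $d(v)\le\lfloor n/2\rfloor$, tightness only says three things: the removed vertex has degree $\lfloor n/2\rfloor$, it has an independent neighbourhood, and the reduced graph is extremal. The last point makes the reduced graph balanced complete bipartite by induction, from which $G$ can be recovered. It does not make every neighbourhood of $G$ independent. You also give no tightness analysis for the dense branch or the odd step.
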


Theorem~\ref{thm:GyKKC} generalizes Theorem~\ref{thm:EGP} since
\begin{equation*}\CD_2(G,\mathbf 1)=\frac12\CD_2(G,2\cdot \mathbf 1)\le \frac12\CD_2(G,(i)).\end{equation*}
Later, McGuinness~\cite{McGuinness} showed that Theorem~\ref{thm:GyKKC} can be proved in a greedy manner.
Recently, \cite{BLPPPV,KLMP} determined the maximum value of \(\CD_2(G,\mathbf{c})\) for all \(\mathbf{c}\) with \(c_i = \infty\) for all \(i \geq 4\) and all sufficiently large \(n\),
with two possible extremal graphs \(K_{\lfloor n/2\rfloor,\lceil n/2\rceil}\) and \(K_n\) when \(c_2=2, c_3=3\).

Erd\H{o}s suggested (see \cite[Problem 43]{Tuza} or \cite{Gyori}) the following strengthening of Theorem~\ref{thm:GyKKC}.
\begin{conjecture}[Erd\H{o}s]\label{conjerdos}
Let \(c_i = i-1\) for all \(i\). If \(G\) is an \(n\)-vertex graph, then \(\CD_2(G,\mathbf{c}) \le \lfloor n^2/4\rfloor\).
\end{conjecture}

Indeed, Conjecture~\ref{conjerdos} generalizes Theorem~\ref{thm:GyKKC} since
\begin{equation*}
\CD_2(G,(i)) = 2\cdot\CD_2(G,(\frac i2))\le 2\cdot\CD_2(G,(i-1)).
\end{equation*}
Gy\H{o}ri and Keszegh~\cite{GyoriKeszegh} proved Conjecture~\ref{conjerdos} for \(K_4\)-free graphs.
They also proposed a stronger bound for this special case, which was later disproved by He, Ma, Wang and Zu~\cite{HeMaWangZu}.
Balogh, He, Krueger, Nguyen and Wigal~\cite{BHKNW} proved an asymptotic version of Conjecture~\ref{conjerdos} with
an \(o(n^2)\) error. We solve Conjecture~\ref{conjerdos} for all sufficiently large \(n\).
Let \(\mathfrak E_n\) be the class of graphs \(G\) on \(n\) vertices for which there is a balanced
bipartition \(V(G)=S\dotcupunion T\) such that \(G[S]=\overline K_{|S|}\) and \(G[S,T]=K_{|S|,|T|}\).
We prove the following theorem:

\begin{theorem}\label{dec:thm:main}
  Let \(c_i = i-1\) for all \(i\). There exists \(n^\ast\) such that for
  every graph \(G\) on \(n\ge n^\ast\) vertices, we have
  \begin{equation*}
  \CD_2(G,\mathbf{c}) \le \lfloor n^2/4\rfloor.
  \end{equation*}
  Moreover, equality holds if and only if \(G\in\mathfrak E_n\).
\end{theorem}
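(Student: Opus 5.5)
We follow the strategy announced in the abstract: a stability argument that reduces to graphs close to \(K_{\lfloor n/2\rfloor,\lceil n/2\rceil}\), followed by an exact decomposition procedure around an almost balanced bipartition. Write \(\cost(\mathcal F)=\sum_{K\in\mathcal F}(|K|-1)\) for a clique family \(\mathcal F\).

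\textbf{Step 1: an LP relaxation and weighted Mantel stability.}
Decomposing a clique \(K\) into a spanning tree of edges shows that \(\CD_2(G,\mathbf c)\) is at most the cost of any decomposition in which each clique of size \(i\) pays \(i-1\). This equals the rank of its edge set in the graphic matroid. So we consider the fractional relaxation in which edge sets of cliques are weighted, and dualize it. Using the spanning-forest polytope (the independent-set polytope of the graphic matroid), the dual constraints become a weighting \(w:E(G)\to[0,1]\) with \(\sum_{e\subseteq F}w(e)\le |V(F)|-1\) for every subforest \(F\) inside a clique. In particular, every triangle carries weight at most \(2\), and every \(K_i\) carries weight at most \(i-1\).

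Next we prove a weighted Mantel inequality: any such \(w\) has total weight at most \(n^2/4\). Together with a stability version, this says that if the total is at least \(n^2/4-\e n^2\), then \(G\) is \(\delta n^2\)-close to a balanced complete bipartite graph and \(w\) is close to \(1\) on the crossing edges. With removal-lemma-type cleanup, this recovers the asymptotic result of~\cite{BHKNW}. It also shows that any \(G\) that is not \(\delta n^2\)-close to \(K_{n/2,n/2}\) has \(\CD_2(G,\mathbf c)\le n^2/4-\e n^2\). For that step we need the fractional optimum to be realized integrally up to \(o(n^2)\), which we obtain from the asymptotic integrality of clique packings (Haxell–Rödl type).

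\textbf{Step 2: the near-extremal case.}
Assume \(G\) is \(\delta n^2\)-close to \(K_{n/2,n/2}\). Take a max-cut bipartition \(S\dotcupunion T\), and move the few vertices of atypical degree into an exceptional set \(X\) with \(|X|=O(\delta n)\).

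We start from the trivial decomposition into single edges and improve it. Each missing crossing pair saves \(1\) relative to \(|S||T|\). Each edge inside \(S\), say \(uv\), should be absorbed together with a common neighbour \(w\in T\) into a triangle. That triangle covers three edges at cost \(2\), so the interior edge is free. More generally, a star of interior edges \(uv_1,\dots,uv_k\) together with \(w\) forms pieces we can group into cliques.

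The accounting we aim for is
\[
\cost \le |S||T| - m(S,T)^{\mathrm c} + \text{(charge of interior edges)},
\]
where the charge is nonpositive. This requires greedily matching each interior edge to a distinct crossing edge that it can form a triangle with. Typical vertices have \((1/2-o(1))n\) neighbours across, so Hall-type greedy choices succeed for typical vertices. Exceptional vertices are handled first, using their large degree into one side.

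Finally, \(|S||T|\le\lfloor n^2/4\rfloor\), with equality only when the bipartition is balanced, no crossing pair is missing, and no saving is available. That means all interior edges lie on one side, which is exactly \(G\in\mathfrak E_n\). We also check that every graph in \(\mathfrak E_n\) attains the bound. Interior edges of \(T\) can be grouped with vertices of \(S\) into cliques \(Q\cup\{s\}\), which cost exactly as many crossing edges as they consume. A lower bound argument, again via the dual weighting \(w=1\) on crossing edges and \(0\) elsewhere, shows no decomposition does better.

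\textbf{Main obstacle.}
The hard part is the exact step. An interior edge \(uv\subseteq S\) must never increase the cost. However, the crossing edges used to absorb interior edges may conflict: each crossing edge can lie in only one clique of the decomposition. In addition, exceptional vertices with many interior neighbours may lack enough disjoint triangles.

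To handle this we will first try a weighted charging. Each interior edge is assigned to a crossing edge, and we use the min-degree slack from max-cut, namely that every vertex has at least half its neighbours across. Where two interior edges share a vertex, we instead use larger cliques \(K_4\), \(K_5\), and so on, built inside the common neighbourhood. If this fails for exceptional vertices, we fall back on the slack \(\e n^2\)-type savings coming from missing crossing pairs near \(X\), which dominate the \(O(|X|n)\) loss.
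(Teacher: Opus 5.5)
Your Step 1 (LP duality, the forest polytope, weighted Mantel) matches the paper, but the stability conclusion you draw from it is false. You claim that a near-optimal $G$ is $\delta n^2$-close to $K_{n/2,n/2}$, and that every $G$ far from it has $\CD_2(G,\mathbf c)\le n^2/4-\e n^2$. A counterexample is $\overline{K}_{\lfloor n/2\rfloor}\vee K_{\lceil n/2\rceil}$. It lies in $\mathfrak E_n$, and it has $\CD_2(G,\mathbf c)=\lfloor n^2/4\rfloor$; your own dual weighting gives the lower bound. Yet it has about $n^2/8$ more edges than $K_{\lfloor n/2\rfloor,\lceil n/2\rceil}$.

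Weighted Mantel stability only sees the positive support of the dual. It tells you that $\mathbf y^+$ is close to $1$ on the crossing pairs of some balanced bipartition $A\cup B$, so $G$ misses few crossing pairs. Edges with $y_e\le 0$ are invisible to it. To control internal edges you must keep the negative part of the sign-unrestricted dual optimum, whose total satisfies $Y^-\le\lfloor n^2/4\rfloor-\CDstar_2(G,\mathbf c)$. Applying the $K_4$ constraint to $e\subseteq A$, $f\subseteq B$ and their four crossing pairs forces $y^-_e+y^-_f\gtrsim 1$. Summing gives only $e(G[A])\,e(G[B])\le\frac{n^2}{4}(Z+Y^-)$, where $Z=\sum_{a\in A,\,b\in B}(1-y^+_{ab})$. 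So the true near-extremal structure is a balanced bipartition with few missing crossing pairs and \emph{one} sparse side; the other side is arbitrary.

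Consequently Step 2 never treats the case that carries the real difficulty: a dense side. There a vertex $d$ may have interior degree close to the size of its side. The cliques $Q\cup\{c\}$ covering its interior edges, with $c$ on the sparse side, must then use essentially all of $d$'s crossing edges, each exactly once. Your greedy/Hall charging assumes small interior degrees, so it has no slack here.

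What is needed is an almost perfect proper colouring, by vertices of the sparse side, of a decomposition of the dense side into small cliques. With edges alone this fails outright. For $n=2k+1$ with $k$ even, $|S|=k$ and $G[T]=K_{k+1}$, you would need $\chi'(K_{k+1})\le k$, which is false. Your unproved sentence that graphs in $\mathfrak E_n$ attain the bound hides exactly this.

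The paper handles it in two steps. It first packs edge-disjoint triangles at vertices of interior degree $>(1-10\e)q$, so the resulting linear rank-$3$ hypergraph has maximum degree at most $(1-10\e)q$. It then applies Kahn's list-edge-colouring theorem, with lists avoiding forbidden crossing pairs. This produces triangles and $K_4$'s with one vertex on the sparse side, whose cost equals the number of crossing edges they use.

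A secondary problem is your fallback for exceptional vertices. In an exact argument you cannot pay with ``savings from missing crossing pairs near $X$'', since an exceptional vertex may have few or none. The paper splits exceptional vertices according to $q-h_v$ and $h_v-l_v$. Missing pairs pay only when $q-h_v\ge N$; otherwise it uses explicit zero-loss constructions, including reassigning sides.
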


Another direction of generalizing Theorem~\ref{thm:EGP} is to maximize \(\CC_t(G,\mathbf 1)\) over
all \(n\)-vertex graphs \(G\) for \(t>2\). For \(n\ge t\), denote by \(T_{n,t}\) the \emph{Tur\'an graph},
the balanced complete \(t\)-partite graph on \(n\) vertices. Dau, Milenkovic, and Puleo~\cite{DMP} extended
Theorem~\ref{thm:EGP} to \(t=3\), proving that \(\CC_3(G) \le \CC_3(T_{n,3})\), and (when \(n \ge 3\)) equality
holds if and only if \(G\) is isomorphic to \(T_{n,3}\). They conjectured the following for larger \(t\).
\begin{conjecture}[Dau, Milenkovic, and Puleo~\cite{DMP}]\label{conj:clique_cover}
For every \(4\le t\le n\), for every \(n\)-vertex graph \(G\), we have \(\CC_t(G) \le \CC_t(T_{n,t})\),
with equality if and only if \(G \cong T_{n,t}\). 
\end{conjecture}

Chen, He and Xie~\cite{ChenHeXie} solved the case of \(t=4\). Balogh, He, Krueger, Nguyen and Wigal~\cite{BHKNW}
proved an asymptotic version of Conjecture~\ref{conj:clique_cover}.
We solve Conjecture~\ref{conj:clique_cover} for all sufficiently large \(n\).

\begin{theorem}\label{cov:thm:main}
  For every $t\ge 2$, there is $n_0(t)$ such that every graph $G$ on $n\ge n_0(t)$ vertices satisfies
\begin{equation*}
\CC_t(G)\le \CC_t(T_{n,t}).
\end{equation*}
Equality holds if and only if \(G\cong T_{n,t}\).
\end{theorem}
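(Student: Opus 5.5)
Write $N=\CC_t(T_{n,t})=\prod_{j=0}^{t-1}\lfloor\frac{n+j}{t}\rfloor$, which is the number of $t$-cliques of $T_{n,t}$. Since every clique in $T_{n,t}$ has at most $t$ vertices, each $t$-clique needs its own member of the cover. The plan is a stability argument in three steps.

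\textbf{Step 1: asymptotic bound and stability.} Let $G$ be an $n$-vertex graph with $\CC_t(G)\ge N$. Trivially $\CC_t(G)\le k_t(G)$, the number of $t$-cliques of $G$. Heavily overlapping $t$-cliques can be covered jointly: any $K_{t+1}$ covers $t+1$ of them with one clique. So if $G$ contains $\e n^{t+1}$ copies of $K_{t+1}$, a greedy or fractional argument should give $\CC_t(G)\le k_t(G)-\delta n^t$. Combined with the generalized Turán bound $k_t(G)\le k_t(T_{n,t})$ for $K_{t+1}$-free graphs, made robust via the graph removal lemma, this yields $\CC_t(G)\le N-\delta n^t$ unless two things hold: $G$ has $o(n^{t+1})$ copies of $K_{t+1}$, and $k_t(G)\ge N-o(n^t)$. (This is essentially the Balogh--He--Krueger--Nguyen--Wigal asymptotic result.)

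In the remaining case, removing $o(n^2)$ edges makes $G$ $K_{t+1}$-free while still having $N-o(n^t)$ $t$-cliques. Generalized Turán stability then shows that $G$ is $o(n^2)$-close in edit distance to $T_{n,t}$.

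\textbf{Step 2: cleaning the structure.} Take a partition $V_1,\dots,V_t$ maximizing the number of crossing edges, so each part has size $n/t+o(n)$. Call a vertex bad if it has $\Omega(n)$ neighbours in its own part or $\Omega(n)$ non-neighbours in some other part; there are $o(n)$ bad vertices. I would then prove minimum-degree-type statements by deletion. Let $\mathcal C$ be an optimal cover of $G-v$ and add to it, for each $t$-clique through $v$, a cover chosen cleverly. This gives the recursion
\[
\CC_t(G)\le \CC_t(G-v)+(\text{cost to cover the $t$-cliques at } v).
\]
The aim is to show this extra cost is at most $N(n)-N(n-1)$, with slack $\Omega(n^{t-1})$ for bad vertices. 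For a bad vertex $v$, its link contains many $t$-cliques of $G[N(v)]$ which extend to $(t+1)$-cliques containing $v$, so the $t$-cliques at $v$ can be covered with savings.

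Iterating this recursion down to a graph on $n_0$ vertices, using a standard ``delete the worst vertex'' induction, reduces the problem to graphs in which every vertex is good.

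\textbf{Step 3: exact covering construction for near-Turán graphs.} This is the main obstacle. Here $G$ differs from a complete $t$-partite graph by a sparse set of internal edges $E_{\mathrm{in}}$ and missing crossing edges $E_{\mathrm{miss}}$, each vertex incident to $o(n)$ of each. Counting relative to $\prod|V_i|$:
\begin{itemize}
\item each missing crossing edge destroys about $(n/t)^{t-2}$ $t$-cliques;
\item each internal edge $xy\subseteq V_i$ creates about $(n/t)^{t-2}$ new transversal-type $t$-cliques, namely $x,y$ plus one vertex from $t-2$ other parts.
\end{itemize}
These new cliques can be covered by $(t+1)$-cliques $\{x,y\}\cup\{$one vertex from each other part$\}$. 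There are about $(n/t)^{t-1}$ such cliques per internal edge, but each also covers $t-1$ old transversal cliques. I would design an explicit cover: for each internal edge $xy$, replace the transversal cliques through $x$ (or through $y$) by $(t+1)$-cliques containing $xy$. The bookkeeping must show that the number of new cliques needed is at most the number of crossing cliques saved, with strict inequality whenever $E_{\mathrm{in}}\cup E_{\mathrm{miss}}\neq\emptyset$ or the part sizes are unbalanced. The key accounting is to cover the $t$-cliques containing an internal edge $xy$ together with the transversal cliques through $x$, in the pattern $\{x,y\}\cup(\text{transversal of the others})$. This costs at most $|V_{j}|$-products with one factor fewer than what is freed when $x$'s own transversals are absorbed. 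Matching-type or greedy assignment of each internal edge to an endpoint should control overlaps, since degrees in $E_{\mathrm{in}}$ are $o(n)$.

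Finally, balancing $\prod|V_i|\le N$, with equality only for balanced parts, yields the bound and forces $G\cong T_{n,t}$ in the equality case.
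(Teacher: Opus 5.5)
Your proposal has two genuine gaps. The first is that Step~1 does not yield stability. The bound $\CC_t(G)\le k_t(G)-\delta n^t$ for graphs with $\e n^{t+1}$ copies of $K_{t+1}$ is of no use, because for such graphs $k_t(G)$ can be far larger than $N$ (e.g.\ $G=K_n$). The generalized Tur\'an bound controls $k_t(G)$ only after the removal lemma, i.e.\ only when copies of $K_{t+1}$ are already rare. So your dichotomy ``either $\CC_t(G)\le N-\delta n^t$, or $G$ has $o(n^{t+1})$ copies of $K_{t+1}$'' is exactly the nontrivial point, and you do not prove it. It also does not follow from the asymptotic bound $\CC_t(G)\le N+o(n^t)$ of Balogh--He--Krueger--Nguyen--Wigal. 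The paper supplies it in two steps. First it passes to a cleaned spanning subgraph $G_0$ with $\CC_t(G)\le\CCstar_t(G_0)+\eta n^t$ (Lemma~\ref{cov:lem:nearby-model}). Then it runs a randomized symmetrization, along which the expectation of $\CCstar_t$ does not decrease and the number of $(t+1)$-cliques is a martingale. This ends at a complete multipartite graph where both quantities are explicit, giving $|\mathcal K_{t+1}(G_0)|\le\frac{t^t}{t!}\,n\bigl(\CC_t(T_{n,t})-\CCstar_t(G_0)\bigr)+O(n^t)$ (Lemmas~\ref{cov:lem:random-symmetrization}--\ref{cov:lem:few-larger-cliques}). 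Only then do the removal lemma and Ma--Qiu apply.

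The second gap is in Steps~2--3. Vertices with many neighbours in their own part cannot be deleted with $\Omega(n^{t-1})$ slack. Take $n=tm$ and let $G$ be $T_{n,t}$ plus a star with $\gamma n$ leaves inside $V_1$, centred at $v$. Every clique contains at most one of the $m^{t-1}$ transversal $t$-cliques through $v$. So covering the $t$-cliques at $v$ costs at least $m^{t-1}=N(n)-N(n-1)$, and $(t+1)$-cliques through $v$ only save on the non-transversal ones. Such vertices therefore cannot be stripped off by a delete-the-worst-vertex induction; bounding their cost exactly is as hard as the exact problem and must be done inside Step~3. The paper deletes nothing. It takes a partition minimizing $D_G$ and keeps the small set $X$ of high-internal-degree vertices. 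Minimality then guarantees that each $x\in X_i$ has at least $\kappa n$ neighbours in every other part (Lemma~\ref{cov:lem:edit-partition}).

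Your Step~3 remains a heuristic. It treats only $t$-cliques containing a single internal edge, not those with three vertices in a part or several repeated parts. It does not say what happens when missing crossing edges destroy the $(t+1)$-cliques you want to use, and it gives no mechanism for strict inequality. The missing idea is an injection from non-transversal $t$-cliques $S$ to \emph{admissible} transversals $T$. Here $T$ either spans a nonedge (a free slot in $\prod_i|V_i|$), or $S\cup T$ is a clique (so the transversal clique $T$ is enlarged to absorb $S$). The injection comes from Hall's theorem via a degree count. An $S$ with $d$ empty parts has at least $\alpha^dn^d$ candidates, while a transversal is a candidate for at most $C(\gamma+\xi)n^d$ cliques of the same multiplicity pattern. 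Transversals are split randomly among patterns, and ``singular'' cliques (one repeated part meeting both $U_i$ and $X_i$) use the $\kappa n$ cross-neighbours of their $X$-vertex. Strictness comes from reserving one missing transversal, or the two transversals inside a $K_{t+1}$ created by an internal edge. This forces $G$ to be complete $t$-partite in the equality case (Theorem~\ref{cov:thm:transversal-matching}).
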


We give a sketch of our proof. First we introduce some notation. For a graph \(G\),
let \(\mathcal{K} (G)\) denote the set of cliques of \(G\), and \(\mathcal{K}_t (G)\) the set of \(t\)-cliques of \(G\).
Given a bipartition \(V(G)=P\dotcupunion Q\), \(e_G(P,Q)\) denotes the number of \(G\)-edges between \(P\) and \(Q\),
and \(m_G(P,Q)\) denotes the number of nonedges between \(P\) and \(Q\).
We call a clique \emph{internal} if all of its vertices are on the same side and \emph{crossing} otherwise.
Internal/crossing neighborhood and degree are defined similarly.
For simplicity of notation, we sometimes don't distinguish between an edge set and the subgraph it induces.

A fractional \(t\)-clique decomposition(cover) assigns a weight \(x_K\ge 0\) to every
\(K\in\mathcal K(G)\), so that the total weight of the cliques containing
each \(t\)-clique is exactly one(at least one). Denote the corresponding minimum cost
by \(\CDstar_t(G,\mathbf c)\)(\(\CCstar_t(G,\mathbf c)\)). We note that \cite{BHKNW} also
proved the fractional version of Conjecture~\ref{conjerdos} and Conjecture~\ref{conj:clique_cover}.

Taking \(t=2\) and \(c_i=i-1\) for all \(i\), the minimum cost \(\CDstar_2(G,(i-1))\) is the value of the linear program
\begin{equation*}
\begin{aligned}
 \CDstar_2(G,(i-1)):=\min\quad&
 \sum_{K\in\mathcal K(G)} (|K|-1)x_K,\\
 \text{subject to}\quad&
 \sum_{\substack{K\in\mathcal K(G)\\e\in E(K)}}x_K=1
       &&\text{for every }e\in E(G),\\
 &x_K\ge0
       &&\text{for every }K\in\mathcal K(G).
\end{aligned}
\end{equation*}
Its dual is
\begin{equation*}
\begin{aligned}
 \CDstar_2(G,(i-1))=\max\quad&
 \sum_{e\in E(G)}y_e,\\
 \text{subject to}\quad&
 \sum_{e\in E(K)}y_e\le |K|-1
       &&\text{for every }K\in\mathcal K(G),\\
 &y_e\in\mathbb R
       &&\text{for every }e\in E(G).
\end{aligned}
\end{equation*}
Let \(\mathbf{y}^+\) be the positive part of an optimal solution \(\mathbf{y}\), that is, \(y^+_e=\max\{y_e,0\}\),
extended by zero to the nonedges of \(G\). If \(K\) is a clique in the positive-support graph of \(\mathbf{y}^+\),
then the total weight of \(\mathbf{y}^+\) on \(E(K)\) is at most \(|K|-1\).
This is exactly the definition of the forest polytope of \(K\), namely the convex hull of the incidence vectors of
spanning forests of \(K\), with isolated vertices allowed. Starting from this observation, we prove a weighted version
of Mantel's theorem and related stability results in Section~\ref{dec:sec:fractional-stability}, which might be of
independent interest. This gives an almost balanced bipartition \(V(G)=A\dotcupunion B\) with few missing crossing edges
and one sparse side when \(\CDstar_2(G,(i-1))\) is close to optimal.

For a family \(\mathcal{D}\) of cliques, denote its total cost under \((c_i=i-1)_{i=1}^\infty\) by
\(
\cost(\mathcal{D}) := \sum_{K\in\mathcal D}(|K|-1).
\)
Given a bipartition \(V(G)=A\dotcupunion B\), an edge decomposition \(\mathcal D\) covers all crossing edges between \(A\) and \(B\).
We control \(\cost(\mathcal D)\) by comparing the cost of each clique to the number of crossing edges each clique covers.
A clique meeting the two sides in \(a,b\ge1\) vertices covers \(ab\) crossing edges and takes cost \(a+b-1\). 
Define the \emph{saving} of such a clique with respect to the bipartition \(A\dotcupunion B\) to be
\begin{equation*}
 ab-(a+b-1)=(a-1)(b-1).
\end{equation*}
Thus a clique with one vertex on one side has cost equal to its number of crossing edges,
while a \(K_4\) with two vertices on each side covers \(4\) crossing edges with cost \(3\) and saves \(1\).
And cliques completely contained in one side cover no crossing edge, which we would try to avoid.
Let \(\mathcal D_{A,B}\) be the set of crossing cliques in \(\mathcal D\) and let
\(\mathcal D_A,\mathcal D_B\) be the set of cliques in \(\mathcal D\) contained in the two sides respectively.
Denote the total saving of \(\mathcal D_{A,B}\) by
\begin{equation*}
 \Sigma(\mathcal D_{A,B})
 :=\sum_{K\in\mathcal D_{A,B}}
   (|K\cap A|-1)(|K\cap B|-1).
\end{equation*}
Then
\begin{equation}\label{dec:eq:1.2}
 \cost(\mathcal D)
 =e_G(A,B)+\cost(\mathcal D_A)+\cost(\mathcal D_B)
  -\Sigma(\mathcal D_{A,B}).
\end{equation}
We would design a greedy construction of an edge decomposition which contains few internal cliques.
We then show that this construction indeed uses cost no larger than \(\lfloor \frac{n^2}{4}\rfloor\)
and that equality holds exactly when \(G\in \mathfrak{E}_n\).
We remark that we only use cliques of size at most \(4\) in our construction.

We then turn to \(t\)-clique covers with cost vector \(\mathbf{1}\). Results from \cite{BHKNW} suggest we can delete
very few edges from \(G\) to get a subgraph \(G_0\) such that \(\CCstar_t(G_0)\) only decreases a little from \(\CC_t(G)\).
Suppose \(\CC_t(G)\) is very close to \(\CC_t(T_{n,t})\), then so is \(\CCstar_t(G_0)\).
We use the same symmetrization operation as in \cite{BHKNW} to reduce \(G_0\) to a complete multipartite graph.
Given nonadjacent vertices \(u,v\in V(G)\), we \emph{symmetrize} \(u\) to \(v\) by deleting
all edges incident to \(u\) and connecting \(u\) to all neighbors of \(v\). A \emph{clone} of a vertex \(u\) in \(G\)
is a vertex \(v\) such that \(u\neq v\), \(uv\notin E(G)\) and \(N_G(u)=N_G(v)\).
Being a clone of each other is an equivalence relation(if we consider \(u\) as a clone of itself) and
each equivalence class is independent. Given two nonadjacent clone classes \(U\) and \(V\),
we symmetrize \(U\) to \(V\) by symmetrizing each \(u\in U\) to some \(v\in V\).
Continuously taking symmetrizations would terminate with a complete multipartite graph.
We introduce a randomized symmetrization process which keeps, in expectation, the fractional \(t\)-clique cover number
nondecreasing and the number of \(t+1\)-cliques unchanged. We would show that \(|\mathcal K_{t+1}(G_0)|\) is bounded
by \(\CC_t(T_{n,t})-\CCstar_t(G_0)\), so we can delete very few copies of \(K_{t+1}\)'s to make it \(K_{t+1}\)-free.
Graph removal lemma~\cite{Removal} and a result by Ma and Qiu~\cite{MaQiu} show that
\(G\) is very close to \(T_{n,t}\) in the sense of edit distance.

Given a partition of \(V(G)\) into \(t\) disjoint parts \(V_1\dotcupunion\cdots\dotcupunion V_t\),
we call a \(t\)-set of vertices with one vertex in each part \emph{transversal} and otherwise \emph{non-transversal}.
The ideal case is when all \(K_t\)'s are transversal and we can cover each \(K_t\) with itself and the total cost
sums up to at most \(\CC_t(T_{n,t})\). However, there could be non-transversal \(K_t\)'s. For each non-transversal
\(K_t\) \(S\), we either find a transversal \(K_t\) \(T\) such that \(S\cup T\) is a clique and use it to
cover both \(S\) and \(T\), or we find a transversal \(t\)-set \(T\) which does not induce a \(K_t\) and is thus unused
in the counting. We would like different non-transversal \(K_t\)'s to use different transversals, so we need a matching
theorem between them. When \(\ed(G,T_{n,t})\) is small, we construct a partition of \(V(G)\) with each part further
partitioned into \(V_i=U_i\dotcupunion X_i\), where \(X_i\) is the set of vertices with many neighbors inside \(V_i\).
We show that the partition satisfies the following conditions: each \(x\in X_i\) has many neighbors in every other \(V_j\),
and the total number of vertices in all \(X_i\)'s is small. Under such a good partition, we show that the aforementioned
wanted matching is available and thus prove Theorem~\ref{cov:thm:main}.

The rest of the article is organized as follows. Sections~\ref{dec:sec:fractional-stability}--\ref{dec:sec:strict-stability}
are dedicated to Theorem~\ref{dec:thm:main}. Section~\ref{dec:sec:fractional-stability} finds an almost balanced bipartition
with few missing crossing edges and one side sparse when \(\CDstar_2(G,(i-1))\) is close to \(\lfloor \frac{n^2}{4}\rfloor\).
Section~\ref{dec:sec:exact-near-cut} constructs an edge decomposition under such a bipartition and proves the upper bound.
Section~\ref{dec:sec:strict-stability} deals with the equality case.
Sections~\ref{cov:sec:stability}--\ref{cov:sec:transversal matching} are dedicated to Theorem~\ref{cov:thm:main}.
Section~\ref{cov:sec:stability} shows \(G\) is close to \(T_{n,t}\) under edit distance when \(\CC_t(G)\) is close to \(\CC_t(T_{n,t})\).
Section~\ref{cov:sec:transversal matching} finds a good \(t\)-partition and a matching between
non-transversal \(K_t\)'s and transversal \(t\)-sets and proves Theorem~\ref{cov:thm:main}.

\section{Stability for fractional edge decompositions}\label{dec:sec:fractional-stability}

In this section we prove the following assertion: if the fractional edge decomposition number \(\CDstar_2(G,(i-1))\) is close to optimal,
then \(G\) admits a balanced bipartition \(V(G)=A\dotcupunion B\) such that one part is sparse and almost all \(A-B\) edges are present.

\begin{theorem}
  \label{dec:thm:fractional-stability}
For every \(\eta>0\) there are \(\delta>0\) and \(n_0\) such that the following holds. If \(G\) has order \(n\ge n_0\) and
\begin{equation*}
  \CDstar_2(G,(i-1))\ge \lfloor \frac{n^2}{4}\rfloor-\delta n^2,
\end{equation*}
then \(V(G)\) has a balanced bipartition \(A\dotcupunion B\) such that, after interchanging \(A,B\) if necessary,
\begin{equation}\label{dec:eq:2.1}
  e(G[A])+m_G(A,B)\le \eta n^2,
\end{equation}
where \(m_G(A,B)\) is the number of missing \(A\)--\(B\) pairs in \(G\).
\end{theorem}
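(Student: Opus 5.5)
We take an optimal dual solution \(\mathbf y\) of the linear program for \(\CDstar_2(G,(i-1))\) and pass to its positive part \(\mathbf y^+\), viewed as a weighting of all pairs of \(V(G)\) that vanishes on nonedges. By LP duality,
\[
\sum_e y^+_e\ \ge\ \sum_e y_e=\CDstar_2(G,(i-1))\ \ge\ \lfloor n^2/4\rfloor-\delta n^2 .
\]
Every clique \(K\) of \(G\) satisfies \(\sum_{e\in E(K)}y^+_e\le |K|-1\). Indeed, the edges of \(E(K)\) with negative \(y_e\) only help, since dropping them cannot increase the sum beyond the bound for the corresponding subclique. Strictly, dropping negative edges gives a sum at least as large, and it is bounded by the constraint on \(K\) itself only after restricting to a subclique. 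To avoid this issue we restrict to cliques of the positive-support graph \(H\), as in the introduction. There \(\mathbf y^+\) lies in the forest polytope of each clique, so in particular \(y^+_e\le 1\) on every edge and \(\sum_{e\subseteq S}y^+_e\le |S|-1\) for each clique \(S\) of \(H\).

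\emph{Step 1 (weighted Mantel).} We first prove that any weighting \(w\colon E(H)\to[0,1]\) satisfying the forest-polytope constraint on every clique of \(H\) has total weight at most \(n^2/4+o(n^2)\). This is done by Motzkin–Straus/Zykov-type symmetrization on \(H\) with weights. Alternatively, apply the triangle constraint \(w_{uv}+w_{vw}+w_{uw}\le 2\) to each triangle and the Goodman-type count: summing \(w\) over pairs and using \(\sum_{v}\bigl(\sum_{u}w_{uv}\bigr)^2\) together with the triangle inequality gives \(\sum_e w_e\le n^2/4+O(n)\). We aim for a clean inequality of the form
\[
\sum_v d_w(v)^2\ \le\ n\sum_e w_e+\sum_{uv}w_{uv}\,(\text{defect}_{uv}),
\]
where the defect measures the weight of \(w\) falling on triangles, so that near-equality forces the weighted triangle density to be small.

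\emph{Step 2 (stability).} From near-equality we extract two facts. First, almost all weight is on edges with \(w_e\) close to \(1\). Second, the weighted triangle count \(\sum_{uvw}w_{uv}w_{vw}w_{uw}\) is \(o(n^3)\), since a triangle with all weights near \(1\) violates the triangle constraint. We then let \(H'\) be the graph of edges with \(w_e\ge 1-\e\). It has at least \(n^2/4-\e' n^2\) edges and \(o(n^3)\) triangles, so the triangle removal lemma together with Füredi/Erdős–Simonovits stability for Mantel gives a balanced bipartition \(A\dotcupunion B\) with \(m_{H'}(A,B)\le\eta n^2/2\). Since \(H'\subseteq G\), this already gives \(m_G(A,B)\le \eta n^2/2\).

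\emph{Step 3 (one sparse side).} It remains to show that one of \(G[A],G[B]\) has at most \(\eta n^2/2\) edges. This step uses the decomposition side of the duality rather than \(\mathbf y\). Suppose both sides contain \(\ge \eta n^2/2\) edges. Because the crossing graph is almost complete, we can greedily find \(\Omega(n^2)\) edge-disjoint \(K_4\)'s with two vertices on each side, each of the form \(a_1a_2\in G[A]\), \(b_1b_2\in G[B]\) with all four crossing edges present. Decompose \(G\) by these \(K_4\)'s, singleton crossing edges, and the remaining internal edges. The internal edges are costly, so this decomposition alone does not suffice. The better approach is to bound the fractional value, using the dual as a certificate. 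Any feasible \(\mathbf y\) has total weight at most \(\lfloor n^2/4\rfloor\) minus a term linear in \(\min(e(G[A]),e(G[B]))\). The argument goes as follows: the \(K_4\) constraints force \(y_{a_1a_2}+y_{b_1b_2}+\sum(\text{crossing})\le 3\), so the four crossing weights plus both internal weights are at most \(3\). This loses a constant relative to the value \(4\) that the crossing edges contribute in the extremal configuration. Averaging this over a random matching of internal edges of \(A\) with internal edges of \(B\) gives a loss of order \(\eta^2 n^2\), which contradicts the assumed lower bound for \(\delta\ll\eta^2\).

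We expect Step 3 to be the main obstacle. It requires turning \(K_4\)-constraints into a global deficit while controlling the negative \(y_e\) and the internal weights on the denser side; the weighted Mantel part is comparatively routine.
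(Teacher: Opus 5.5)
\emph{Verdict: there is a genuine gap. The near-integrality claim in Step 2 is asserted without proof, and one of your routes for Step 1 cannot work.}

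\medskip

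\textbf{What matches the paper.} Your architecture is the paper's. You apply a weighted Mantel bound to \(\mathbf y^+\). You extract a balanced bipartition on which the crossing weights are nearly \(1\). You then use the \(K_4\) constraint on pairs \((e,f)\in E(G[A])\times E(G[B])\).

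\textbf{Step 3 is not the obstacle.} It takes a few lines. From \(y_e+y_f+\sum_{g\in R(e,f)}y_g\le 3\) one gets
\(1\le\sum_{g\in R(e,f)}(1-y^+_g)+\sum_{g\in R(e,f)}y^-_g+y^-_e+y^-_f\).
Positive internal weights only help, so nothing on the denser side needs controlling. Summing over all pairs (the paper's route; your packing also works) gives \(e(G[A])\,e(G[B])\le\frac{n^2}{4}(Z+Y^-)\), where \(Z:=\sum_{a\in A,b\in B}(1-y^+_{ab})\). The negative part is bounded by \(Y^-=Y^+-\CDstar_2(G,(i-1))\le\lfloor n^2/4\rfloor-\CDstar_2(G,(i-1))\le\delta n^2\), using weighted Mantel for \(Y^+\). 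What Step 3 needs from Step 2 is therefore \(Z\le\theta n^2\) with \(\theta\ll\eta^2\). That is a bound on the fractional crossing deficit at scale \(\eta^2\), not just \(m_{H'}(A,B)\le\eta n^2/2\).

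\textbf{Step 1: triangles alone do not suffice.} Your alternative route cannot work. Weight \(2/3\) on every pair of \(K_n\) makes each triangle tight, yet its total is \(n(n-1)/3\). The constraints on all larger cliques of the positive-support graph are essential. The paper uses Edmonds' forest-polytope theorem: on a clique, the weighting is a convex combination of forests, each forest is bipartite so \(\sum_{ij\in F}x_ix_j\le\frac14\), and a minimum-support Motzkin--Straus maximizer lives on such a clique.

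\textbf{Step 2: near-integrality is asserted, not proved.} You state that near-equality forces almost all weight onto pairs with \(w_e\) near \(1\). That is the real stability statement, and it is not a consequence of the Mantel bound, which only says the uniform vector nearly maximizes the quadratic form. It is also not linear in the deficit. Take three classes of proportions \(\frac12,\frac12-s,s\) with pairwise weights \(1,\frac12,\frac12\). This satisfies all constraints with \(\Delta=s^2/2\), yet \(\Phi:=\sum w_{ij}(1-w_{ij})\approx sn^2/4\).

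\textbf{How the paper proves it.} It shows \(\Phi\le 7\sqrt{\Delta}\,n^2\) in three moves.
\begin{itemize}
\item It symmetrizes pairs of equivalence classes with zero weight between them. Both \(W\) and \(\Phi\) are convex combinations of the two symmetrized values, and \(\sqrt{\cdot}\) is concave, so it suffices to treat pairwise-positive classes. Their weighting then lies in the forest polytope of \(K_k\).
\item It roots each forest at the largest class. This gives \(p_1(1-p_1)-\sum_{ij\in F}p_ip_j\ge 0\) with explicit slack, hence \(|p_1-\frac12|\le\sqrt{\Delta}\) and \(\Phi/n^2\le\Delta+2\sum_{2\le i<j}p_ip_jw_{ij}\).
\item A case split on \(p_1-p_2\) bounds the last expression by \(7\sqrt{\Delta}\).
\end{itemize}
With that in hand, the graph \(\{w>2/3\}\) is triangle-free outright, so no removal lemma is needed, and it has at least \(W-3\Phi\) edges. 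Mantel stability, together with \(1-w\le\frac32 w(1-w)\) on its edges, then yields the required bound on \(Z\).

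Without an argument for near-integrality, your Step 2, and hence Step 3, rests on an unproved claim.
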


We prove Theorem~\ref{dec:thm:fractional-stability} through a weighted form of Mantel's theorem.
An edge weighting \(\mathbf{w}\) is a map from \([n]\times[n]\) to \([0,1]\) such that
\(w_{ij}=w_{ji}\) for \(1\le i<j\le n\) and \(w_{ii}=0\) for \(1\le i\le n\).
Its \emph{positive-support graph} has edge \(ij\) exactly when \(w_{ij}\) is positive.
Note that below when we take \(S\) to have size \(3\), \eqref{dec:eq:2.2} says total edge weight on
each triangle can't exceed \(2\), this is exactly the condition for normal Mantel's theorem.
We remark that the condition of Lemma~\ref{dec:lem:weighted-mantel} cannot be weakened to only
weighted triangle free, that is, only requiring \eqref{dec:eq:2.2} for \(S\) of size \(3\).
Consider a weighted \(K_5\) with weight \(2/3\) on each edge, it satisfies the inequality in \eqref{dec:eq:2.2}
for each triangle but has total edge weight \(\frac{20}3\), which is greater than \(\lfloor\frac{25}{4}\rfloor\).

\begin{lemma}
  \label{dec:lem:weighted-mantel}
Let \(\mathbf{w}:[n]\times[n]\to[0,1]\) be an edge weighting.
Suppose that
\begin{equation}\label{dec:eq:2.2}
  \sum_{\substack{i,j\in S\\i<j}}w_{ij}\le |S|-1,\ \text{for\ every\ vertex\ set\ }S\text{\ in\ the\ positive-support\ graph\ of\ }\mathbf w \text{\ that\ induces\ a\ clique}.
\end{equation}
Then
\begin{equation*}
  \sum_{i<j}w_{ij}\le \lfloor\frac{n^2}{4}\rfloor.
\end{equation*}
\end{lemma}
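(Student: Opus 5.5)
The plan is a weighted version of the Cauchy--Schwarz proof of Mantel's theorem, with the clique condition \eqref{dec:eq:2.2} used only through a local inequality. Write \(W=\sum_{i<j}w_{ij}\) and \(d(x)=\sum_{y}w_{xy}\) for the weighted degree, so that \(\sum_x d(x)=2W\). Double counting gives
\begin{equation*}
\sum_x d(x)^2=\sum_{x}\sum_{y}w_{xy}d(x)=\sum_{u<v}w_{uv}\bigl(d(u)+d(v)\bigr).
\end{equation*}
The left-hand side is at least \(4W^2/n\) by Cauchy--Schwarz. So it suffices to establish the averaged local inequality
\begin{equation}\label{plan:local}
\sum_{u<v}w_{uv}\bigl(d(u)+d(v)\bigr)\le nW .
\end{equation}
This would give \(W\le n^2/4\). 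The floor for odd \(n\) would then need a separate sharpening, discussed in the last paragraph.

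For \eqref{plan:local}, I would write \(d(u)+d(v)=2w_{uv}+\sum_{x\ne u,v}(w_{ux}+w_{vx})\) and split the vertices \(x\ne u,v\) according to whether \(x\) is a positive-support neighbour of both \(u\) and \(v\). If \(x\) is adjacent in the support to at most one of them, its contribution is at most \(1\). If \(x\) is adjacent to both, then \(\{u,v,x\}\) is a support clique, and \eqref{dec:eq:2.2} with \(|S|=3\) gives \(w_{ux}+w_{vx}\le 2-w_{uv}\). That is larger than \(1\), so the pointwise bound \(d(u)+d(v)\le n\) can fail. The remark after the lemma, about \(K_5\) with weights \(2/3\), shows that triangle constraints alone cannot suffice, so the larger cliques must enter. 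The fix is to use the common neighbourhood \(N(u)\cap N(v)\) as a whole: for any support clique \(C\subseteq N(u)\cap N(v)\), apply \eqref{dec:eq:2.2} to \(S=C\cup\{u,v\}\). Equivalently, by the forest-polytope description, the restriction of \(\mathbf w\) to \(S\) is a convex combination of spanning forests of \(S\). Then the excess \(\sum_{x\in C}(w_{ux}+w_{vx}-1)\) is paid for by a deficit in the internal weight of \(C\): a forest with many edges at \(u,v\) has few edges inside \(C\).

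So I would aim for an edge-charging version of \eqref{plan:local}. Each excess at a pair \(uv\) coming from a common neighbour \(x\) is charged to edges \(xy\) inside the common neighbourhood, which themselves carry weight below \(1\) relative to what they could carry. Summing with weights \(w_{uv}\) should make the charges cancel against the \(nW\) budget. This is the main obstacle: making the charging globally consistent when the common neighbourhood is not a clique. I would first attempt it by decomposing \(N(u)\cap N(v)\) into support cliques greedily. If the charging argument resists, the fallback is induction on \(n\), removing a suitable pair \(u,v\) with \(w_{uv}\) maximal. One would then need \(w_{uv}+\sum_x(w_{ux}+w_{vx})\le n-1=\lfloor n^2/4\rfloor-\lfloor (n-2)^2/4\rfloor\), proved by the same clique/forest bound on \(\{u,v\}\cup C\).

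The induction route has the advantage of producing the floor automatically, since the base cases \(n\le 2\) are trivial and the increment \(n-1\) is exactly the increment of \(\lfloor n^2/4\rfloor\). For the Cauchy--Schwarz route with \(n\) odd, I would instead recover \(\lfloor n^2/4\rfloor\) by deleting a vertex of minimum weighted degree. The remaining graph has \(W'\le (n-1)^2/4\), and the removed vertex has \(d\le 2W/n\). Together these give \(W\le \frac{n}{n-2}\cdot\frac{(n-1)^2}{4}\), which is too weak near the extremum. So for odd \(n\) I expect to rely on the induction version, with the cross-term bound as the single technical heart of the proof.
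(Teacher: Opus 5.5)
Your Cauchy--Schwarz reduction to $\sum_{u<v}w_{uv}\bigl(d(u)+d(v)\bigr)\le nW$ is fine. But that inequality, or the increment bound in your fallback, is the entire content of the lemma. You give no argument for it beyond a charging heuristic that you yourself leave open. The heuristic also targets the wrong mechanism, and the fallback is false for the pair you choose.

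Take vertices $u,v$ and an independent set $Z$ of $n-2\ge 2$ further vertices. Set $w_{uv}=\tfrac23+2\epsilon$ and $w_{uz}=w_{vz}=\tfrac23-\epsilon$ for $z\in Z$, with $0<\epsilon<\tfrac16$. The maximal support cliques are the triangles $uvz$, each of weight exactly $2$, so \eqref{dec:eq:2.2} holds, and $uv$ is the unique pair of maximum weight. Yet
\[
w_{uv}+\sum_{z\in Z}(w_{uz}+w_{vz})=\frac{4n-6}{3}-2(n-3)\epsilon>n-1 ,
\]
so your induction step cannot be closed. Since $N(u)\cap N(v)=Z$ is independent, the cliques $C$ you would feed into the forest bound are single vertices. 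The bound on $\{u,v\}\cup C$ then reduces to the triangle constraint, and there are no internal edges of the common neighbourhood to charge to. For $n\ge5$ even $d(u)+d(v)\le n$ fails at $uv$. The averaged inequality survives only because the edges $uz,vz$ have large slack, as $d(z)=\tfrac43-2\epsilon$ is tiny. The compensation is non-local, so charging into $N(u)\cap N(v)$ cannot work. The floor for odd $n$ also remains open.

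The missing idea is a global reduction to a situation where the whole relevant vertex set is a support clique, so that Edmonds' theorem applies at once. The paper does this in Motzkin--Straus style. It maximizes $\mathbf x^{\mathsf T}A\mathbf x$ over simplex points with coordinates in $\tfrac1n\mathbb Z$ and takes a maximizer of minimum support. If $w_{ij}=0$ for two support vertices, the objective is linear along transfers of mass between $i$ and $j$, so the support could be shrunk. Hence the support is a support clique, and the restricted weighting is a convex combination of forests. Each forest is bipartite, so it contributes at most $\mathbf x(U)\mathbf x(U')\le\lfloor n^2/4\rfloor/n^2$ on the grid. Comparing with the uniform vector gives the bound with the floor built in.

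Your averaged inequality is in fact true, for instance via the symmetrization in the proof of Lemma~\ref{dec:lem:weight-near-integrality}. There $nW$ is preserved on average, while $\sum_x d(x)^2$ can only increase by convexity. At the end $\mathbf w$ is a convex combination of blown-up forests, each triangle-free, so Jensen gives $\sum_x d(x)^2\le nW$. But that same symmetrization already gives $W\le\lfloor n^2/4\rfloor$ directly, since each blown-up forest is a bipartite graph on $n$ vertices. This makes the Cauchy--Schwarz detour unnecessary.
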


\begin{proof}
Let \(A\) be the symmetric matrix with entries \(w_{ij}\). Maximize \(\mathbf{x}^{\mathsf T}A\mathbf{x}\) on
\begin{equation*}
  X:=\left\{(x_1, \cdots, x_n)|\sum_{i=1}^nx_i=1, 0\leq x_i\leq1, x_i \text{\ is a multiple of\ } \frac{1}{n}, i\in [n]\right\}
\end{equation*}
and choose a maximizer \(\widetilde{\mathbf{x}}\) with minimum support.
Then for every \(i,j\) with \(\widetilde{x}_i,\widetilde{x}_j>0\) we have \(w_{ij}>0\). 
If \(w_{ij}=0\), transferring mass between \(i\) and \(j\) preserves the objective and removes one of them from the support.

Consequently, on the support of \(\widetilde{\mathbf{x}}\), \eqref{dec:eq:2.2} holds for every vertex
subset.  Edmonds' forest-polytope theorem~\cite{Edmonds} says that a nonnegative vector \(\mathbf{z}\in\R^{E(K_r)}\)
is a convex combination of incidence vectors of forests if and only if
\(\mathbf{z}(E(S))\le |S|-1\) for every nonempty vertex set \(S\).  Hence the
restricted vector \(\widetilde{\mathbf{w}}\) on the support of \(\widetilde{\mathbf x}\) is such a convex combination.
If \(F\) is a forest and \(U,U'\) is a bipartition of \(V(F)\), then
\begin{equation*}
  \sum_{ij\in E(F)}\widetilde{x}_i\widetilde{x}_j
  \le \widetilde{\mathbf{x}}(U)\widetilde{\mathbf{x}}(U')\le\frac{1}{n^2}\lfloor\frac{n^2}{4}\rfloor.
\end{equation*}
Taking the convex combination gives \(\widetilde{\mathbf{x}}^{\mathsf T}A\widetilde{\mathbf{x}}\le\frac{2}{n^2}\lfloor\frac{n^2}{4}\rfloor\).
Now take \(\mathbf{x}=\left(\frac{1}{n}\right)_{i=1}^n\), then
\begin{equation*}
\sum_{i<j}w_{ij} = \frac{n^2}{2}\mathbf{x}^{\mathsf T}A\mathbf{x} \le \frac{n^2}{2}\widetilde{\mathbf{x}}^{\mathsf T}A\widetilde{\mathbf{x}} \le \lfloor\frac{n^2}{4}\rfloor.
\end{equation*}
\end{proof}

The next lemma says that, under the conditions of Lemma~\ref{dec:lem:weighted-mantel},
if the total weight \(W\) is close to optimal, then the weighting is almost integer.
Note that each term in \(\Phi\) below is \(0\) if \(w_{ij}\) is \(0\) or \(1\) and takes maximum at \(w_{ij}=\frac12\).

\begin{lemma}
\label{dec:lem:weight-near-integrality}
Let \(\mathbf{w}:[n]\times[n]\to[0,1]\) be an edge weighting
satisfying \emph{\eqref{dec:eq:2.2}} and put
\begin{equation*}
 W:=\sum_{i<j}w_{ij}=\left(\frac14-\Delta\right)n^2,\qquad
 \Phi:=\sum_{i<j}w_{ij}(1-w_{ij}).
\end{equation*}
Here \(\Delta\ge0\) by Lemma~\ref{dec:lem:weighted-mantel}. Then
\begin{equation}\label{dec:eq:assertion}
 \Phi\le7\sqrt{\Delta}\,n^2.
\end{equation}
\end{lemma}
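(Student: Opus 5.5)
The plan is to make the proof of Lemma~\ref{dec:lem:weighted-mantel} quantitative. The key observation is that \(\Phi\) is a variance. Whenever \(\mathbf w\) restricted to a clique \(K\) of the positive-support graph is written, via Edmonds' theorem, as a convex combination \(\sum_F\lambda_F\mathbf 1_F\) of forests, let \(\mathcal F\) be a random forest with \(\Pr[\mathcal F=F]=\lambda_F\). Then for each pair \(ij\subseteq K\),
\begin{equation*}
w_{ij}(1-w_{ij})=\operatorname{Var}(\mathbf 1[ij\in\mathcal F]).
\end{equation*}
So a large \(\Phi\) means that the forests in the decomposition disagree on many pairs. The goal is to show that such disagreement costs a deficit in \(\mathbf x^{\mathsf T}A\mathbf x\) relative to \(\frac{2}{n^2}\lfloor n^2/4\rfloor\). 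That deficit is at most of order \(\Delta\), which is where the \(\sqrt{\Delta}\) comes from.

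First, I would handle the case where the positive-support graph \(H\) is itself a clique, so that \eqref{dec:eq:2.2} holds for every vertex set. Write \(\mathbf w=\sum_F\lambda_F\mathbf 1_F\) and take \(\mathbf x\) uniform. Each forest \(F\) is bipartite with sides \(U_F,U_F'\), and
\begin{equation*}
e(F)\le e(U_F,U_F')-m_F,
\end{equation*}
where \(m_F\) is the number of pairs of \((U_F,U_F')\) that are not edges of \(F\). Averaging gives
\begin{equation*}
\mathbb E\,m_{\mathcal F}\le \Delta n^2,\qquad \mathbb E\bigl(\tfrac{n^2}{4}-|U_{\mathcal F}||U'_{\mathcal F}|\bigr)\le\Delta n^2 .
\end{equation*}
Thus a typical forest has \(|F\triangle K_{U_F,U_F'}|=O(\Delta n^2)\), and the sides are balanced up to \(O(\sqrt\Delta\,n)\).

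Next, I would compare two independent samples \(\mathcal F,\mathcal F'\), using
\begin{equation*}
2\Phi=\mathbb E\,|\mathcal F\triangle\mathcal F'| .
\end{equation*}
Since \(|\mathcal F\triangle\mathcal F'|\le|K_{U,U'}\triangle K_{V,V'}|+O(\Delta n^2)\), it suffices to bound the symmetric difference of the two near-balanced complete bipartite graphs. That is where the main obstacle lies. Two unrelated balanced cuts differ in about \(n^2/4\) pairs, so I must show that the cuts \(\{U_F,U_F'\}\) essentially coincide. Convexity alone will not give this. For that I would use the constraint \eqref{dec:eq:2.2} on sets \(S\) of size about \(4\) that straddle two different cuts. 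If cuts \(\{U,U'\}\) and \(\{V,V'\}\) each carry a positive share of the mass and differ on \(\theta n\) vertices, then \(\mathbf w\) has weight close to \(1\) on most pairs of a \(K_4\) lying across both cuts. This should violate \(\mathbf w(E(S))\le |S|-1\) on many such \(S\), unless \(\theta^2\lesssim\Delta\). The result is an agreement of cuts up to \(O(\sqrt\Delta\,n)\) vertices, hence \(\Phi=O(\sqrt\Delta\,n^2)\). The constant \(7\) would come from bookkeeping: balance \(\sqrt\Delta\), disagreement \(2\sqrt\Delta\), and so on.

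Finally, for general \(H\) I would reduce to the clique case by the mass-transfer step of Lemma~\ref{dec:lem:weighted-mantel}. Equivalently, I would run the argument for a maximizer \(\widetilde{\mathbf x}\) whose support is a clique of \(H\), with \(\widetilde{\mathbf x}^{\mathsf T}A\widetilde{\mathbf x}\ge\frac{2W}{n^2}\). This gives structure (an almost complete balanced bipartite weighting) on the support of \(\widetilde{\mathbf x}\). The vertices dropped by mass transfer are exactly those that are nonadjacent in \(H\) to a retained vertex and carry the same linear gain \((A\widetilde{\mathbf x})_i\). Each such vertex can be assigned to a side, and its weights controlled by the same near-equality. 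I expect this transfer step to be delicate but routine once the clique case is in hand. The cut-agreement step above is where the real work lies.
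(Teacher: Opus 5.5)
There is a genuine gap; in fact there are two.

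First, your ``clique case'' is degenerate. If the positive-support graph of \(\mathbf w\) is all of \(K_n\), then \eqref{dec:eq:2.2} with \(S=[n]\) gives \(W\le n-1\). So \(\Delta\ge\frac14-\frac{n-1}{n^2}\), and the bound is trivial. Equivalently, every forest in the Edmonds decomposition has at most \(n-1\) edges, so it can never be close to a dense \(K_{U_F,U_F'}\). Your estimates such as \(\mathbb E\,m_{\mathcal F}\le\Delta n^2\) are therefore vacuous, and the cut-agreement problem you single out as the main obstacle never arises.

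The real situation is a blow-up. There are zero-weight clone classes \(C_1,\dots,C_k\) with proportions \(p_1\ge\dots\ge p_k\). The forest decomposition lives on the class level \([k]\), and each pair \(ij\) is counted with weight \(p_ip_j\). In that setting no extra \(K_4\) constraints are needed, since the forest polytope already encodes all of \eqref{dec:eq:2.2}. The paper roots every forest at the heaviest class and uses
\[
p_1(1-p_1)-\sum_{ij\in E(F)}p_ip_j=\sum_{\substack{j\ne1\\ j\ \mathrm{root}}}p_1p_j+\sum_{j\ \mathrm{nonroot}}\bigl(p_1-p_{\pi_F(j)}\bigr)p_j\ge0.
\]
Averaging this over the decomposition gives \(|p_1-\frac12|\le\sqrt\Delta\). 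Combined with \(\Phi/n^2\le\Delta+2\sum_{2\le i<j}p_ip_jw_{ij}\), the bound follows after splitting on whether \(p_1-p_2\ge\sqrt\Delta\).

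Second, and more seriously, the reduction through the maximizer \(\widetilde{\mathbf x}\) of Lemma~\ref{dec:lem:weighted-mantel} throws away exactly the information you need. The inequality \(\widetilde{\mathbf x}^{\mathsf T}A\widetilde{\mathbf x}\ge 2W/n^2\) retains no trace of \(W\) when the maximum is attained trivially. For example, if some \(w_{ij}=1\), putting masses \(\lfloor n/2\rfloor/n\) and \(\lceil n/2\rceil/n\) on \(i\) and \(j\) already achieves the maximum \(2\lfloor n^2/4\rfloor/n^2\), whatever \(\mathbf w\) looks like elsewhere. So structure on the support of \(\widetilde{\mathbf x}\) says nothing about the pairs that make up \(\Phi\). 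The step you call delicate but routine is where the whole difficulty lies.

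What is needed is a reduction under which both \(W\) and \(\Phi\) transform \emph{exactly}. The paper symmetrizes a class \(D\) onto a class \(C\) when there is zero weight between them, and vice versa. It checks that \(\mathbf w^{D\to C}\) and \(\mathbf w^{C\to D}\) still satisfy \eqref{dec:eq:2.2}. It then observes that for every \(\phi\) with \(\phi(0)=0\), \(\sum_{i<j}\phi(w_{ij})\) equals the \(\bigl(\frac{|C|}{|C|+|D|},\frac{|D|}{|C|+|D|}\bigr)\)-convex combination of the two symmetrized sums. Taking \(\phi(t)=t\) and \(\phi(t)=t(1-t)\), concavity of \(\sqrt{\cdot}\) propagates \(\Phi\le7\sqrt\Delta\,n^2\) by induction on the number of classes. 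This reduces to the case where every two classes carry positive weight, which is the blown-up setting above. Without such an exact averaging identity, or some other device that controls all pairs, your plan does not reach the lemma.
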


\begin{proof}
We prove the second by induction on the number of equivalence classes defined as follows.
Declare \(u\sim v\) if
\(u=v\), or if
\begin{equation*}
 w_{uv}=0,\qquad w_{ux}=w_{vx}\quad
 \text{for every }x\notin\{u,v\}.
\end{equation*}
This is an equivalence relation. Each class has weight zero internally.
For any two distinct classes \(C,D\), the value \(w_{uv}\) is the same for all \(u\in C\) and \(v\in D\).

Suppose two distinct classes \(C,D\) have weight zero between them. Fix arbitrary \(c\in C\) and \(d\in D\).
Define \(\mathbf{w}^{D\to C}\) by retaining the weights on pairs outside \(C\cup D\),
assigning weight zero to every pair inside \(C\cup D\), and setting
\begin{equation*}
 w^{D\to C}_{ux}=w^{D\to C}_{xu}=w_{cx}
 \qquad
 \bigl(u\in C\cup D,\ x\notin C\cup D\bigr).
\end{equation*}
This is called \emph{symmetrizing} \(D\) to \(C\) in \(w\).
Let \(W^{D\to C}\), \(\Delta^{D\to C}\) and \(\Phi^{D\to C}\) be the corresponding sums after symmetrization.
Define \(\mathbf{w}^{C\to D}\) analogously. Both new weightings satisfy \eqref{dec:eq:2.2}: a positive-support
clique contains at most one vertex of \(C\cup D\), and replacing that vertex by a representative of the copied
class produces a positive-support clique of the same weight in \(\mathbf{w}\).
Moreover, \(C\cup D\) is one equivalence class in each new weighting, so the number of classes decreases.

For any function \(\phi:[0,1]\to\mathbb R\) with \(\phi(0)=0\), we have
\begin{equation*}
  \sum_{i<j}\phi(w_{ij}) = \frac{|C|}{|C|+|D|}\sum_{i<j}\phi(w^{D\to C}_{ij}) + \frac{|D|}{|C|+|D|}\sum_{i<j}\phi(w^{C\to D}_{ij}).
\end{equation*}
Taking \(\phi(t)=t\) and \(\phi(t)=t(1-t)\) shows that both \(W\) and \(\Phi\) satisfy this identity:
\begin{equation*}
 \Delta=\frac{|C|}{|C|+|D|}\Delta^{D\to C}+\frac{|D|}{|C|+|D|}\Delta^{C\to D},
 \qquad \Delta^{D\to C},\Delta^{C\to D}\ge0,
\end{equation*}
\begin{equation*}
 \Phi=\frac{|C|}{|C|+|D|}\Phi^{D\to C}+\frac{|D|}{|C|+|D|}\Phi^{C\to D}.
\end{equation*}
Since the square root is concave, suppose we have proved \eqref{dec:eq:assertion} for both \(\mathbf{w}^{D\to C}\) and \(\mathbf{w}^{C\to D}\), then
\begin{equation*}
\begin{aligned}
  \frac{\Phi}{n^2} &= \frac{|C|}{|C|+|D|}\frac{\Phi^{D\to C}}{n^2}+\frac{|D|}{|C|+|D|}\frac{\Phi^{C\to D}}{n^2}\\
 &\le \frac{|C|}{|C|+|D|}7\sqrt{\Delta^{D\to C}}+\frac{|D|}{|C|+|D|}7\sqrt{\Delta^{C\to D}}\\
 &\le 7\sqrt{\frac{|C|}{|C|+|D|}\Delta^{D\to C}+\frac{|D|}{|C|+|D|}\Delta^{C\to D}} = 7\sqrt{\Delta}.
\end{aligned}
\end{equation*}

By induction on the number of equivalence classes, it suffices to prove \eqref{dec:eq:assertion} for
every weighting that can be obtained from \(\mathbf{w}\) by repeated symmetrization. In particular,
we may assume that every pair of distinct equivalence classes has positive weight between them. 
If there is only one class, then all weights vanish and the assertion is immediate.
Hence assume that there are \(k\ge2\) classes \(C_1, \cdots, C_k\) with proportions \(p_1\ge p_2\ge\cdots\ge p_k>0\).
Without loss of generality, we may assume the vertex \(i\) is in \(C_i\) for \(i\in[k]\).
Then \(w_{ij}>0\) is the weight between classes \(i,j\), which satisfies \eqref{dec:eq:2.2} for every nonempty \(S\subseteq[k]\):
\begin{equation*}
 \sum_{\substack{i<j\\i,j\in S}}w_{ij}\le |S|-1
 \qquad(\varnothing\ne S\subseteq[k]).
\end{equation*}
Thus \(\overline{\mathbf{w}}=(w_{ij})_{i,j\in[k]}\) lies in Edmonds' forest polytope.
Therefore there is a set of forests on \([k]\) such that \(\overline{\mathbf{w}}\) is
a convex combination of their incidence vectors.

For any fixed forest \(F\) on \([k]\), root the component of \(F\) containing vertex \(1\) at \(1\), root every
other component arbitrarily, and denote the parent of a nonroot \(j\) by
\(\pi_F(j)\). We have
\begin{equation}\label{dec:eq:fixed forest}
\begin{aligned}
p_1(1-p_1) - \sum_{ij\in E(F)}p_ip_j &= p_1\sum_{j\ne1}p_j - \sum_{j\ {\rm nonroot}}p_{\pi_F(j)}p_j\\
&= \sum_{\substack{j\ne1\\j\ {\rm root}}}p_1p_j  +\sum_{\substack{j\ne1\\j\ {\rm nonroot}}}(p_1-p_{\pi_F(j)})p_j\\
&\ge0.
\end{aligned}
\end{equation}

Taking convex combination gives
\begin{equation}\label{dec:eq:conv comb}
\begin{aligned}
  p_1(1-p_1) - \sum_{\substack{i<j\\i,j\in [k]}}p_ip_jw_{ij} &= p_1(1-p_1) - \frac{1}{n^2}\sum_{\substack{i<j\\i,j\in [n]}}w_{ij}\\
  &= p_1(1-p_1) - \frac{1}{n^2}\left(\frac14-\Delta\right)n^2\\
  &= \Delta - \left(p_1 - \frac{1}{2}\right)^2 \ge 0,
\end{aligned}
\end{equation}
\begin{equation*}
\left|p_1-\frac12\right|\le\sqrt{\Delta}.
\end{equation*}

Now since \(w_{ij}(1-w_{ij})\le1-w_{ij}\) on edges incident with \(1\) and
\(w_{ij}(1-w_{ij})\le w_{ij}\) on all other edges, we have
\begin{equation}\label{dec:eq:Phi bound}
\begin{aligned}
  \frac{\Phi}{n^2} &= \sum_{i<j}p_ip_jw_{ij}(1-w_{ij})\\
  &\le \sum_{j>1}p_1p_j(1-w_{1j}) + \sum_{2\le i<j}p_ip_jw_{ij}\\
  &= p_1(1-p_1) - \sum_{i<j}p_ip_jw_{ij} + 2\sum_{2\le i<j}p_ip_jw_{ij}\\
  &\le \Delta + 2\sum_{2\le i<j}p_ip_jw_{ij}.
\end{aligned}
\end{equation}

If \(\Delta=0\), then \(p_1=1/2\). In particular, equality holds in \eqref{dec:eq:conv comb}.
Suppose \(p_2<p_1\), then \(p_3>0\).
Since \(w_{23}>0\), there is some forest \(F\) in the convex combination where
\(23\) is an edge.
Then \eqref{dec:eq:fixed forest} holds strictly for this \(F\).
Consequently, \eqref{dec:eq:conv comb} is also strict, a contradiction. Hence \(p_2=p_1=1/2\).
There is no other class, and \(\Delta=0\) forces \(w_{12}=1\). Thus \(\Phi=0\) and the assertion holds.

Assume henceforth that \(\Delta>0\).  If \(\sqrt{\Delta}\ge1/4\), then
\begin{equation*}
 \frac{\Phi}{n^2}=\sum_{i<j}p_ip_jw_{ij}(1-w_{ij})\le\frac14\sum_{i<j}p_ip_j\le\frac18\le7\sqrt{\Delta}.
\end{equation*}
It remains to consider \(0<\sqrt{\Delta}<1/4\). If \(p_1-p_2\ge \sqrt{\Delta}\),
for every forest \(F\) on \([k]\) and every edge of \(F\) not incident with \(1\), it is easy to see that
\begin{equation*}
 \sqrt{\Delta}p_{\pi_F(j)}p_j
 \le (p_1-p_{\pi_F(j)})p_j.
\end{equation*}
\begin{equation*}
\begin{aligned}
\sqrt{\Delta}\sum_{\substack{j\ne1\\j\ {\rm nonroot}\\\pi_F(j)\ne1}}p_{\pi_F(j)}p_j
 &\le \sum_{\substack{j\ne1\\j\ {\rm nonroot}}}(p_1-p_{\pi_F(j)})p_j\\
 &\le \sum_{\substack{j\ne1\\j\ {\rm root}}}p_1p_j  +\sum_{\substack{j\ne1\\j\ {\rm nonroot}}}(p_1-p_{\pi_F(j)})p_j
\end{aligned}
\end{equation*}
By \eqref{dec:eq:conv comb}, taking convex combination yields
\begin{equation*}\sqrt{\Delta}\sum_{2\le i<j}p_ip_jw_{ij}\le\Delta\end{equation*}
and hence
\begin{equation*}
 \frac{\Phi}{n^2}\le \Delta+2\sum_{2\le i<j}p_ip_jw_{ij}\le 3\sqrt{\Delta}.
\end{equation*}

If \(p_1-p_2<\sqrt{\Delta}\), then \(1-p_1-p_2\le3\sqrt{\Delta}\). Since \(w_{ij}\le1\) and \(p_2\le p_1\le1/2+\sqrt{\Delta}\),
\begin{equation*}
 \sum_{2\le i<j}p_ip_jw_{ij}\le p_2(1-p_1-p_2)+\frac{1}{2}(1-p_1-p_2)^2\le \frac32\sqrt{\Delta}+\frac{15}{2}\Delta.
\end{equation*}
It follows from \eqref{dec:eq:Phi bound} that
\begin{equation*}
 \frac{\Phi}{n^2}\le \Delta+2\sum_{2\le i<j}p_ip_jw_{ij}\le3\sqrt{\Delta}+16\Delta\le7\sqrt{\Delta}.
\end{equation*}
This proves the estimate in the remaining case.
\end{proof}

We can now prove stability for the weighted Mantel's theorem, which says that,
under the conditions of Lemma~\ref{dec:lem:weighted-mantel}, if the total weight \(W\) is close to optimal,
then the weighting \(\mathbf w\) is almost complete balanced bipartite.

\begin{lemma}
  \label{dec:lem:weighted-stability}
For every \(\theta>0\) there is \(\delta>0\) such that, for all
sufficiently large \(n\), let \(\mathbf{w}\) be an edge weighting
satisfying \emph{\eqref{dec:eq:2.2}} and
\begin{equation}\label{dec:eq:2.6}
  \sum_{i<j}w_{ij}\ge \left(\frac{1}{4}-\delta\right)n^2.
\end{equation}
Then there is a balanced bipartition \([n]=A\dotcupunion B\) for which
\begin{equation}\label{dec:eq:2.7}
  \sum_{a\in A,\ b\in B}(1-w_{ab})\le\theta n^2.
\end{equation}
\end{lemma}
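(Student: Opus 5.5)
The plan is to use Lemma~\ref{dec:lem:weight-near-integrality} to round \(\mathbf w\) to a triangle-free graph with almost \(n^2/4\) edges, and then apply the classical stability theorem for Mantel's theorem. Write \(W=\sum_{i<j}w_{ij}=(\frac14-\Delta)n^2\). Lemma~\ref{dec:lem:weighted-mantel} gives \(\Delta\ge0\), and \eqref{dec:eq:2.6} gives \(\Delta\le\delta\). Lemma~\ref{dec:lem:weight-near-integrality} then yields \(\Phi=\sum_{i<j}w_{ij}(1-w_{ij})\le 7\sqrt{\delta}\,n^2\). The whole argument converts this near-integrality into near-bipartiteness.

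First I define the rounded graph \(H\) on \([n]\) whose edges are the pairs \(ij\) with \(w_{ij}>2/3\). I claim \(H\) is triangle-free. A triangle \(ijk\) in \(H\) is a clique of the positive-support graph. By \eqref{dec:eq:2.2} with \(|S|=3\) its total weight is at most \(2\), but each of its three weights exceeds \(2/3\), a contradiction. Next I bound the weight discarded by the rounding. For \(w\le 2/3\) we have \(1-w\ge 1/3\), hence \(w\le 3w(1-w)\). Therefore \(\sum_{ij\notin E(H)}w_{ij}\le 3\Phi\), and
\begin{equation*}
e(H)\ge \sum_{ij\in E(H)}w_{ij}\ge W-3\Phi\ge\Bigl(\tfrac14-\delta-21\sqrt{\delta}\Bigr)n^2 .
\end{equation*}

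Now I apply the Erd\H{o}s--Simonovits stability theorem for triangles (Füredi's quantitative form suffices). A triangle-free graph on \(n\) vertices with at least \((\frac14-\gamma)n^2\) edges can be made bipartite by deleting at most \(\gamma n^2\) edges. Since \(H\) minus those edges is bipartite with about \(n^2/4\) edges, its bipartition \(A'\dotcupunion B'\) has \(|A'||B'|\ge(\frac14-2\gamma)n^2\). Hence \(\bigl||A'|-n/2\bigr|=O(\sqrt{\gamma})\,n\), and \(H\) misses at most \(O(\gamma)n^2\) pairs of \(A'\times B'\). Moving \(O(\sqrt\gamma)n\) vertices produces a balanced bipartition \(A\dotcupunion B\) such that at most \(O(\sqrt{\gamma})n^2\) pairs of \(A\times B\) are non-edges of \(H\); here \(\gamma=\delta+21\sqrt\delta\).

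Finally I bound \eqref{dec:eq:2.7} by splitting the pairs of \(A\times B\) into two kinds.
\begin{itemize}
\item For a pair that is an edge of \(H\), we have \(w>2/3\), so \(w(1-w)\ge\frac23(1-w)\) and hence \(1-w\le\frac32 w(1-w)\). These pairs contribute at most \(\frac32\Phi\le 11\sqrt\delta\,n^2\) in total.
\item A pair that is a non-edge of \(H\) contributes at most \(1\), so these pairs contribute at most \(O(\sqrt\gamma)n^2=O(\delta^{1/4})n^2\).
\end{itemize}
Choosing \(\delta\) small enough in terms of \(\theta\) makes the total at most \(\theta n^2\).

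The only step that needs care is the stability step. I will invoke the standard triangle stability theorem as a black box rather than reprove it. The rounding threshold \(2/3\) is what makes \(H\) genuinely triangle-free while losing only \(O(\Phi)\) weight, so no removal-lemma argument is needed.
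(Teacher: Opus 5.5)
Your proof is correct and follows the paper's proof essentially step by step: the same rounding to \(H=\{ij: w_{ij}>2/3\}\), the same bound \(e(H)\ge W-3\Phi\) via Lemma~\ref{dec:lem:weight-near-integrality}, and the same final split using \(1-w\le\frac32 w(1-w)\) on \(H\)-edges and the trivial bound \(1\) on \(H\)-nonedges. The only difference is that the paper proves the triangle-free stability step inline, by taking \(A_0=N_H(v)\) for a maximum-degree vertex \(v\) (which is F\"{u}redi's argument), whereas you cite that stability theorem as a black box.
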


\begin{proof}
Fix \(\theta>0\).  Choose \(0<\delta<1/16\) so small that, with \( \eta_\delta=\delta+21\sqrt{\delta}\), we have
\begin{equation*}
 2\eta_\delta+\sqrt{\eta_\delta}
 +\frac{21}{2}\sqrt{\delta}<\frac{\theta}{2}.
\end{equation*}
Take \(n\) sufficiently large that \(1/n<\theta/2\). Let \(W,\Delta,\Phi\) be as in
Lemma~\ref{dec:lem:weight-near-integrality}. Lemma~\ref{dec:lem:weighted-mantel}, \eqref{dec:eq:2.6},
and Lemma~\ref{dec:lem:weight-near-integrality} give
\begin{equation}\label{dec:eq:2.9}
 0\le\Delta\le\delta,\qquad
 \Phi\le7\sqrt{\Delta}\,n^2.
\end{equation}

Let \(H\) be the graph whose edges are the pairs \(ij\) with \(w_{ij}>2/3\).
Then \(H\) is triangle-free, since otherwise the total weight on a triangle would be greater than \(2\),
contrary to \eqref{dec:eq:2.2}. For every \(t\in[0,1]\),
\begin{equation*}
 \mathbf1_{\{t>2/3\}}\ge t-3t(1-t).
\end{equation*}
Consequently, if \(\eta=\Delta+21\sqrt{\Delta}\), then
\begin{equation}\label{dec:eq:2.10}
\begin{aligned}
 e(H)&=\sum_{i<j}\mathbf1_{\{w_{ij}>2/3\}}\\
 &\ge\sum_{i<j}w_{ij}-3\sum_{i<j}w_{ij}(1-w_{ij})\\
 &= W-3\Phi\\
 &\ge\left(\frac14-\eta\right)n^2.
\end{aligned}
\end{equation}

Choose a vertex \(v\) of maximum degree \(d\) in \(H\). Put
\(A_0=N_H(v)\) and \(B_0=V(H)\setminus A_0\).  The set \(A_0\) is
independent. We have
\begin{equation*}
\begin{aligned}
 \sum_{b\in B_0}d_H(b)
 &=e_H(A_0,B_0)+2e_H(B_0)\\
 &=e(H)+e_H(B_0)\\
 &\le d(n-d)\le n^2/4.
\end{aligned}
\end{equation*}
Thus
\begin{equation*}
 0\le e_H(B_0)\le d(n-d)-e(H)\le\eta n^2,\qquad
 d(n-d)-e_H(A_0,B_0)\le2\eta n^2.
\end{equation*}
Also \(e(H)\le d(n-d) = \frac{n^2}{4}-\left|d-\frac n2\right|^2\), so \eqref{dec:eq:2.10} gives
\begin{equation*}
 \left|d-\frac n2\right|\le\sqrt{\eta}\,n.
\end{equation*}
Move at most \(\sqrt{\eta}\,n+1\) vertices between \(A_0\) and \(B_0\)
to obtain a balanced bipartition \(A\dotcupunion B\). Recall that
\(m_H(A,B)\) denotes the number of missing \(H\)-edges across this bipartition.
Moving one vertex creates at most \(n\) new missing crossing edges. Thus
\begin{equation}\label{dec:eq:2.12}
 m_H(A,B)\le(2\eta+\sqrt{\eta})n^2+n.
\end{equation}

For every edge \(ab\in E(H)\), the inequality \(w_{ab}>2/3\) gives
\begin{equation*}
 1-w_{ab}\le\frac32w_{ab}(1-w_{ab}).
\end{equation*}
The pairs missing from \(H\) contribute at most one each, so \eqref{dec:eq:2.9} and
\eqref{dec:eq:2.12} yield
\begin{equation*}
\begin{aligned}
 \sum_{a\in A,\ b\in B}(1-w_{ab})
 &\le m_H(A,B)+\frac32\Phi\\
 &\le
 \left(2\eta+\sqrt{\eta}
       +\frac{21}{2}\sqrt{\Delta}\right)n^2+n\\
 &\le
 \left(2\eta_\delta+\sqrt{\eta_\delta}
       +\frac{21}{2}\sqrt{\delta}+\frac1n\right)n^2
 <\theta n^2.
\end{aligned}
\end{equation*}
This is \eqref{dec:eq:2.7}.
\end{proof}

Recall that the linear-programming dual of the fractional edge decomposition problem is
\begin{equation}\label{dec:eq:2.4}
  \CDstar_2(G,(i-1))=\max\left\{\sum_{e\in E(G)}y_e:
  \sum_{e\in E(K)}y_e\le |K|-1\text{ for every clique }K, y_e\in \R\right\}.
\end{equation}
Let \(\mathbf{y}\) be an optimal solution of \emph{\eqref{dec:eq:2.4}}.
Extend it symmetrically to \([n]\times[n]\) and let \(\mathbf{y}^+,\mathbf{y}^-\) be
its positive and negative parts respectively:
\begin{equation*}
  y_e^+=\max\{y_e,0\},\quad y_e^-=\max\{-y_e,0\}\quad \text{for\ } e\in E(G);
  \qquad Y^+=\sum_e y_e^+,\quad Y^-=\sum_e y_e^-.
\end{equation*}
Then \(\CDstar_2(G,(i-1))=\sum_e y_e=Y^+-Y^-\). Put \(y_e=0\) on nonedges.
The constraint on single edge \(e\) gives \(y_e^+\le1\).
By definition, \(\mathbf{y}^+\) satisfies \emph{\eqref{dec:eq:2.2}},
so Lemma~\ref{dec:lem:weighted-mantel}--\ref{dec:lem:weighted-stability} apply.
We are now ready to prove Theorem~\ref{dec:thm:fractional-stability}.

\begin{proof}[Proof of Theorem~\ref{dec:thm:fractional-stability}]
Choose \(\theta>0\) so small that
\(\theta+\frac12\sqrt\theta<\eta\), and let \(\delta_0>0\) be supplied by Lemma~\ref{dec:lem:weighted-stability}.
Take \(\delta<\delta_0/2\) small enough so that
\begin{equation}\label{dec:eq:2.14}
  \theta+\frac12\sqrt{\theta+\delta}<\eta.
\end{equation}
Enlarge \(n_0\) beyond the threshold in Lemma~\ref{dec:lem:weighted-stability} and
so that every rounding estimate below holds.

Let \(G\) satisfy the hypothesis of Theorem~\ref{dec:thm:fractional-stability},
and let \(\mathbf{y},\mathbf{y}^+,\mathbf{y}^-\) be as above. Since
\begin{equation*}
  \sum_e y_e^+=\CDstar_2(G,(i-1))+Y^-
  \ge \lfloor \frac{n^2}{4}\rfloor-\delta n^2>(\frac14-\delta_0)n^2.
\end{equation*}
Lemma~\ref{dec:lem:weighted-stability} gives a balanced bipartition \(A\dotcupunion B\) with
\begin{equation*}
Z_{\mathbf{y}^+}:=\sum_{a\in A,\ b\in B}(1-y^+_{ab})\le\theta n^2.
\end{equation*}
Every missing crossing pair in \(G\) has weight zero in \(\mathbf y\), so
\begin{equation}\label{dec:eq: miss cross}
 m_G(A,B)\le Z_{\mathbf{y}^+}\le\theta n^2.
\end{equation}

For \(e=aa'\in E(G[A])\) and \(f=bb'\in E(G[B])\), let \(R(e,f)\) be the four crossing
pairs between the endpoints.  If one of these pairs is a nonedge, then
\(\sum_{g\in R(e,f)}(1-y^+_g)\ge1\).  If all four are edges, the dual constraint on the \(K_4\) gives
\begin{equation}\label{dec:eq:2.15}
  1\le
  \sum_{g\in R(e,f)}(1-y^+_g)
  +y_e^-+y^-_f+\sum_{g\in R(e,f)}y^-_g.
\end{equation}
Sum \eqref{dec:eq:2.15} over \(E(G[A])\times E(G[B])\), adding the nonnegative \(\mathbf{y}^-\)-terms also when
\(R(e,f)\) contains a nonedge. Every multiplicity is at most \(n^2/4\), so
\begin{equation*}
  e(G[A])e(G[B])\le \frac{n^2}{4}(Z_{\mathbf{y}^+}+Y^-).
\end{equation*}
Interchange the sides so that \(e(G[A])\le e(G[B])\) if necessary, then
\begin{equation}\label{dec:eq:2.17}
  e(G[A])\le \frac n2\sqrt{Z_{\mathbf{y}^+}+Y^-}.
\end{equation}

Since
\begin{equation*}
Y^-=Y^+-\CDstar_2(G,(i-1))\le \lfloor\frac{n^2}{4}\rfloor-\CDstar_2(G,(i-1))\le  \delta n^2,
\end{equation*}
combining this with \eqref{dec:eq: miss cross} and \eqref{dec:eq:2.17} gives
\begin{equation*}
  m_G(A,B)+e(G[A])
  \le \theta n^2+\frac12\sqrt{\theta+\delta}n^2<\eta n^2
\end{equation*}
by \eqref{dec:eq:2.14}.  This is \eqref{dec:eq:2.1}.
\end{proof}

\section{Exact decomposition near a balanced complete bipartition}\label{dec:sec:exact-near-cut}

We now construct a clique decomposition step by step based on the balanced bipartition \(V(G)=A\dotcupunion B\)
supplied by Theorem~\ref{dec:thm:fractional-stability}. We first exclude a few vertices with high internal degrees
or high missing crossing degrees, which we call exceptional. This will be made formal in
Theorem~\ref{dec:thm:stable-decomposition}. Then we work on the remaining subsets \(C\subseteq A\) and \(D\subseteq B\)
with equal size. Regard \(C\times D\) as the edge set of the complete bipartite graph \(K_{C,D}\).
For \(W\subseteq C\times D\), write \(\Delta_D(W):=\max_{d\in D}d_W(d)\). A \(C\)--\(D\) pair is forbidden
when it is a nonedge in \(G\)(recorded in \(M\)) or has already been used by a previously selected clique(recorded in \(U\)).

As suggested by Theorem~\ref{dec:thm:fractional-stability}, the induced graph on \(C\) is sparse.
So we just greedily select one common neighbor in \(D\) for each edge in \(G[C]\) to form a triangle.
The construction also controls the number of selected crossing edges incident with each vertex of \(D\).

\begin{lemma}
\label{dec:lem:triangle-extension}
Let \(F\) be a graph on \(C\), and let \(M,U\subseteq C\times D\), where
the pairs in \(M\cup U\) are forbidden. Suppose that \(a,b\) are nonnegative integers such that
\begin{equation*}
 d_F(c)+d_{M\cup U}(c)\le a\qquad \text{for\ }c\in C
\end{equation*}
and
\begin{equation}\label{dec:eq:3.3}
 |D|>2a+\frac{|U|+2e(F)}{b+1}.
\end{equation}
Then \(E(F)\) can be covered by edge-disjoint triangles \(cc'd\), with
\(cc'\in E(F)\) and \(d\in D\), such that the set
\(W\subseteq C\times D\) of crossing edges in these triangles is disjoint
from \(M\cup U\) and satisfies
\begin{equation}\label{dec:eq:3.4}
 d_W(c)=d_F(c)\quad(c\in C),\qquad
 \Delta_D(U\cup W)\le\max\{\Delta_D(U),b+2\}.
\end{equation}
In particular, the triangles have total cost
\begin{equation*}
  \cost(\mathcal T)=|W|=2e(F).
\end{equation*}
\end{lemma}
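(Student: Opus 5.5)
We process the edges of \(F\) one at a time in an arbitrary order \(e_1,\dots,e_m\), where \(m=e(F)\), and greedily assign to each edge \(e_k=cc'\) an apex \(d_k\in D\) forming the triangle \(cc'd_k\). Let \(W_k\) denote the crossing pairs \(\{cd_j,c'd_j: j<k\}\) already used. Call \(d\in D\) \emph{admissible} for \(e_k\) if four conditions hold:
\[
  cd,\ c'd\notin M\cup U\cup W_k,\qquad d_{U\cup W_k}(d)\le b .
\]
If \(d\) is admissible, adding \(cd,c'd\) raises \(d_{U\cup W}(d)\) to at most \(b+2\). A vertex \(d\) whose degree already exceeds \(b\) is never chosen again, so its degree never changes after that. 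Every \(d\in D\) therefore ends with \(d_{U\cup W}(d)\le\max\{d_U(d),b+2\}\), which gives the bound on \(\Delta_D(U\cup W)\) in \eqref{dec:eq:3.4}. The triangles are edge-disjoint: their internal edges are distinct edges of \(F\), and each crossing pair is excluded once it has been used. Each \(F\)-edge at \(c\) contributes exactly one new pair at \(c\), so \(d_W(c)=d_F(c)\) and \(|W|=2e(F)\). Each triangle has cost \(2\), so \(\cost(\mathcal T)=|W|=2e(F)\). Disjointness of \(W\) from \(M\cup U\) holds by construction.

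The only real step is to show that an admissible \(d\) always exists. We count the vertices of \(D\) excluded at step \(k\), for \(e_k=cc'\).

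First, the pairs at \(c\) lying in \(M\cup U\cup W_k\). Their number is at most \(d_{M\cup U}(c)+d_{W_k}(c)\). Since \(d_{W_k}(c)\) counts earlier \(F\)-edges at \(c\), it is at most \(d_F(c)-1\). Hence at most \(a-1\) vertices \(d\) are excluded because \(cd\) is forbidden, and likewise at most \(a-1\) because of \(c'd\).

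Second, the vertices \(d\) with \(d_{U\cup W_k}(d)\ge b+1\). Their number is at most
\[
  \frac{|U|+|W_k|}{b+1}<\frac{|U|+2e(F)}{b+1}.
\]

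In total at most \(2a-2+\frac{|U|+2e(F)}{b+1}<|D|\) vertices are excluded, using \eqref{dec:eq:3.3}. So an admissible \(d\) exists at every step.

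The main point to get right is bounding \(d_{W_k}(c)\) by \(d_F(c)-1\) rather than by \(d_F(c)\); this refinement is harmless in any case, since the hypothesis has slack. One must also count the high-degree vertices using the current \(|W_k|<2e(F)\) rather than the final \(|W|\).
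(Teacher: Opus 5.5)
Your proposal is correct and follows the same greedy argument as the paper: you process the edges of \(F\) in turn and choose an apex \(d\in D\) that avoids the forbidden pairs at \(c\) and \(c'\) and whose current \(U\cup W'\)-degree is at most \(b\). The only difference is cosmetic: the paper uses the cruder bound \(d_{W'}(x)\le d_F(x)\), giving at most \(2a\) blocked vertices, and \eqref{dec:eq:3.3} absorbs this just as well as your sharper count of \(2a-2\).
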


\begin{proof}
Process the edges of \(F\) in any order.  Suppose that \(W'\) is the set
of crossing edges already used and that \(cc'\) is the next edge.  Vertices
\(d\in D\) for which \(cd\) or \(c'd\) belongs to
\(M\cup U\cup W'\) number at most
\begin{equation*}
 \bigl(d_{M\cup U}(c)+d_{W'}(c)\bigr)
 +\bigl(d_{M\cup U}(c')+d_{W'}(c')\bigr)\le2a,
\end{equation*}
because \(d_{W'}(x)\le d_F(x)\).  Moreover,
\begin{equation*}
 \bigl|\{d\in D:d_{U\cup W'}(d)\ge b+1\}\bigr|
 \le\frac{|U|+|W'|}{b+1}
 \le\frac{|U|+2e(F)}{b+1}.
\end{equation*}
Thus \eqref{dec:eq:3.3} leaves a vertex \(d\) for which both crossing edges are available
and \(d_{U\cup W'}(d)\le b\).  Add the triangle \(cc'd\).  Its two crossing
edges are new, and the degree of \(d\) becomes at most \(b+2\).  Induction gives an
edge-disjoint packing with \eqref{dec:eq:3.4}.  Each incidence of \(c\) with an edge
of \(F\) uses exactly one crossing edge at \(c\), so \(d_W(c)=d_F(c)\).
\end{proof}

If \(D\) is also sparse, \(G[D]\) could be decomposed with the same method. In case it is dense,
we will first decompose \(G[D]\) into triangles and edges, resulting in a hypergraph \(\mathcal H\).
Then we use vertices in \(C\) as colors to give a proper edge-coloring of \(\mathcal H\).
A hyperedge \(e\) could only use a \(c\in C\) if all of its vertices are adjacent to \(c\) so that
they could form a clique together. For notation, the rank of a hypergraph is the maximum size of any
of its hyperedges, and its maximum codegree is
\begin{equation*}
 \Delta_2(\mathcal H)
 :=\max_{u\ne v}\bigl|\{e\in E(\mathcal H):u,v\in e\}\bigr|.
\end{equation*}
We use Kahn's list-edge-coloring theorem~\cite{Kahn}. 
\begin{theorem}[Kahn~\cite{Kahn}]\label{dec:thm:Kahn}
For every fixed integer \(r\ge2\) and every \(\eta>0\), there are \(\delta>0\) and \(D_0\) such that
the following holds.  If a hypergraph \(\mathcal H\) has rank at most \(r\), maximum degree \(D\ge D_0\), and
\begin{equation*}
 \Delta_2(\mathcal H)\le\delta D,
\end{equation*}
then every assignment of a list \(L(e)\) of at least \((1+\eta)D\) colors to each \(e\in E(\mathcal H)\) admits
a proper edge-coloring \(\phi\) with \(\phi(e)\in L(e)\) for every \(e\).
\end{theorem}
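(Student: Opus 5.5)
This is Kahn's theorem, and the paper only cites it, so a self-contained proof would reproduce his semi-random (``nibble'') argument. I would prove it as follows. Work with the line graph of \(\mathcal H\): a proper edge-coloring of \(\mathcal H\) from the lists is a list coloring of a graph of maximum degree at most \(r(D-1)\). The point is that the codegree hypothesis \(\Delta_2(\mathcal H)\le\delta D\) makes the neighborhood of each hyperedge \(e\) split into \(|e|\le r\) nearly disjoint cliques, one per vertex of \(e\). Heuristically, each vertex \(v\) then behaves like a single ``color-class constraint'', so the correct count is about \(D\) colors rather than \(rD\). The plan is to make this heuristic rigorous by iteratively coloring a small random fraction of the edges while tracking, for each vertex \(v\) and color \(\gamma\), the number of uncolored edges at \(v\) whose lists still contain \(\gamma\).

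The main procedure runs in rounds. In each round, every uncolored edge \(e\) is activated independently with a small probability \(\varepsilon\), and each activated edge picks a uniformly random color \(\gamma\) from its current list \(L(e)\). The edge keeps \(\gamma\) if no other activated edge meeting \(e\) picked \(\gamma\). Afterwards, every color used at a vertex \(v\) is deleted from the lists of all remaining edges through \(v\). To keep the analysis clean, I would add an equalizing coin flip, as in Kahn's approach and in the Molloy--Reed treatment, so that each color survives in each list with the same probability. The invariant to maintain for the current parameters \(D_i\) (degrees) and \(L_i\) (list sizes) is that every uncolored edge has \(|L(e)|\ge(1+\eta/2)\,\cdot\) (the typical number of competing edges per color). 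Concretely, for every vertex \(v\) and color \(\gamma\), the quantity \(d_{v,\gamma}\), the number of uncolored edges through \(v\) whose list contains \(\gamma\), must stay at most \((1+o(1))D_iL_i/L_0\)-scaled relative to the lists.

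In expectation one round multiplies degrees by about \(1-\varepsilon\) and list sizes by about \(1-\varepsilon\) as well. Small codegree ensures that the events ``\(e\) keeps its color'' for edges through a common vertex are almost independent, because any two such edges share only that vertex up to \(\delta D\) exceptions, so the ratio between list size and degree is preserved up to \(1+O(\varepsilon^2+\delta)\) errors. Concentration comes from Talagrand's inequality, or a bounded-difference martingale, applied to each \(|L(e)|\) and each \(d_{v,\gamma}\). The Lovász Local Lemma then handles the fact that there are unboundedly many such events: each depends only on choices within bounded distance, so on polynomially many in \(D\) other events, while each fails with probability \(\exp(-D^{\Omega(1)})\).

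After \(O(\varepsilon^{-1}\log(1/\eta))\) rounds the maximum degree has dropped to a small fraction of the remaining list sizes, namely lists larger than \(r\) times the residual degree. At that point the remaining edges are colored greedily, since each uncolored edge then conflicts with fewer edges than it has available colors. I expect the main obstacle to be the error accounting: the errors accumulated over many rounds must stay below \(\eta\), and the concentration bounds must remain valid as \(D_i\) shrinks. This forces choosing \(\delta\) and \(D_0\) after \(\varepsilon\) and the number of rounds, and it requires a careful second-moment or Talagrand-certificate argument exploiting \(\Delta_2(\mathcal H)\le\delta D\) in every round.
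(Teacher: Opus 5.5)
The paper does not prove this theorem; it is quoted from Kahn and used as a black box. Your framework is the standard semi-random route: random partial list coloring with equalizing coin flips, concentration, the local lemma, and a finishing step. However, the step that makes the argument work is misstated. You claim that one round multiplies both degrees and list sizes by about \(1-\varepsilon\), so that their ratio is preserved. Taken at face value, the ratio would then stay near \(1+\eta\) forever, and you could never reach ``lists larger than \(r\) times the residual degree''.

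In fact, let \(\ell\) be the common list size and \(t\) an upper bound on all color-degrees \(d_{v,\gamma}\). With the equalizing flip you propose, each color survives in \(L(e)\) with probability about \(1-\varepsilon r t/\ell\), since it dies if any of up to \(rt\) competing edges tentatively takes it. When \(r>1+\eta\), the lists therefore initially shrink faster than the plain degrees, which shrink at rate about \(\varepsilon\). The ratio of list size to plain degree first gets worse, not preserved.

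What improves is \(x:=t/\ell\). Condition on \(\gamma\) not being used at \(v\). An edge counted by \(d_{v,\gamma}\) survives only if it stays uncolored and keeps \(\gamma\) at its other \(r-1\) vertices. This gives \(t'\approx t(1-\varepsilon)(1-\varepsilon(r-1)x)\), and hence \(x'\approx x\bigl(1-\varepsilon(1-x)\bigr)\).

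This drift is the only place the slack \(\eta\) is used. It gives \(x\le 1/(1+\eta)\), so \(x\) drops by a factor at least \(1-\varepsilon\eta/(1+\eta)\) per round. Integrating \(dx/di=-\varepsilon x(1-x)\) is what produces a round count of order \(\varepsilon^{-1}\log(1/\eta)\). It is also where \(\Delta_2(\mathcal H)\le\delta D\) is needed: the losses of \(\gamma\) at the other vertices must concentrate uniformly over all pairs \((v,\gamma)\). This fails when many edges at \(v\) share a second vertex, because the loss then becomes all-or-nothing for them. Your sketch has neither the right invariant (\(\max_{v,\gamma}d_{v,\gamma}\) against \(\min_e|L(e)|\), not degree against list size) nor this computation. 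So the error accounting you single out is not the real crux: even with zero error, the dynamics you state make no progress.

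The finishing step also does not follow as written. Greedy coloring needs \(|L(e)|>\sum_{v\in e}(d(v)-1)\) with plain degrees \(d(v)\), which you do not track. You have two ways to repair it.
\begin{enumerate}
\item Track the plain degrees as well. Once \(x\) is small, the per-round list loss \(\varepsilon r x\ell\) is summable, so \(\ell\) stays above some \(c(r,\eta)D\), while \(d(v)\) keeps decaying by a factor \(1-\Theta(\varepsilon)\) per round. The number of rounds then depends on \(r\) as well as on \(\eta\).
\item Stop as soon as \(rt\le \ell/8\) and finish with the standard local-lemma fact. If each color of \(L(e)\) lies in at most \(|L(e)|/8\) lists of edges meeting \(e\), a proper list coloring exists.
\end{enumerate}
Finally, the local lemma needs dependency degree polynomial in \(D\), but the hypothesis allows arbitrarily long lists. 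You should therefore first trim every list to exactly \(\lceil(1+\eta)D\rceil\) colors.
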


The next lemma covers edges in \(G[D]\) when it is dense.

\begin{lemma}
  \label{dec:lem:dense-completion}
For every \(0<\beta<1/2\),
there is \(\e_0>0\) such that, for every fixed
\(0<\e\le\e_0\) and all sufficiently large
integers \(q\), the following holds.
Let \(C,D\) be disjoint \(q\)-sets, let \(H\) be a graph on \(D\) with
\begin{equation}\label{dec:eq:3.5}
  e(H)\ge\beta q^2,
\end{equation}
and let \(M,U\subseteq C\times D\) be sets of forbidden crossing pairs satisfying
\begin{equation*}
  \Delta_D(M\cup U)\le3\e q.
\end{equation*}
Then \(E(H)\) has an exact decomposition into triangles with one vertex
in \(C\) and \(K_4\)'s with one vertex in \(C\), using pairwise distinct
crossing edges outside \(M\cup U\), such that the total cost of these cliques equals
the number of crossing edges they use.
\end{lemma}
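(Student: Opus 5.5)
The plan is to reduce the statement to a list edge-colouring problem and then apply Theorem~\ref{dec:thm:Kahn}. First decompose \(E(H)\) into edge-disjoint pieces, each a single edge or a triangle of \(H\); call the resulting rank-\(3\) hypergraph on \(D\) \(\mathcal H\). Then assign to each hyperedge \(e\) a colour \(c\in C\). An edge \(uv\) with colour \(c\) becomes the triangle \(cuv\), which has cost \(2\) and uses \(2\) crossing edges. A triangle \(uvw\) with colour \(c\) becomes the \(K_4\) \(cuvw\), which has cost \(3\) and uses \(3\) crossing edges. So the cost identity holds piece by piece.

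Two conditions make this legitimate. The colour \(c\) must be allowed for \(e\): every pair \(cx\) with \(x\in e\) lies outside \(M\cup U\) (such pairs are edges of \(G\), since \(M\) records the nonedges). And the colouring must be proper: two hyperedges sharing a vertex \(x\in D\) receive different colours, which is exactly the condition that every crossing edge \(cx\) is used at most once. Let the list \(L(e)\) consist of all allowed colours. Since \(\Delta_D(M\cup U)\le 3\e q\), each vertex of \(e\) forbids at most \(3\e q\) colours, so \(|L(e)|\ge q-9\e q\). The hyperedges are pairwise edge-disjoint subgraphs of \(H\), so every pair \(u\ne v\) lies in at most one hyperedge; hence \(\Delta_2(\mathcal H)\le1\), which is at most \(\delta D\) once the maximum degree is large. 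If the maximum degree of \(\mathcal H\) is below the threshold \(D_0\) of Theorem~\ref{dec:thm:Kahn}, the greedy colouring suffices, because a hyperedge meets fewer than \(3D_0\le q-9\e q\) others. It therefore suffices to build \(\mathcal H\) with
\begin{equation*}
 \Delta(\mathcal H)\le (1-\gamma)q,\qquad \gamma=\gamma(\beta)>0.
\end{equation*}
Then choose \(\eta=\gamma/2\), and \(\e_0\) so small that \((1+\eta)(1-\gamma)q\le(1-9\e)q\); Theorem~\ref{dec:thm:Kahn} with \(r=3\) then gives the colouring.

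The main step, and the obstacle I expect, is producing this triangle packing. Replacing two edges at \(x\) by one triangle lowers the \(\mathcal H\)-degree of \(x\) by \(1\). Triangles never raise degrees, so only vertices of large \(H\)-degree need attention. Call \(v\) \emph{heavy} if \(d_H(v)>(1-\beta/4)q\). For such \(v\), at most \((q-d_H(v))q\le\beta q^2/4\) edges of \(H\) meet \(D\setminus N_H(v)\), so by \eqref{dec:eq:3.5}
\begin{equation*}
 e\bigl(H[N_H(v)]\bigr)\ge \tfrac{3}{4}\beta q^2-q.
\end{equation*}
Since maximum degrees are below \(q\), \(H[N_H(v)]\) contains a matching of size about \(\beta q/2\). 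The matching edges together with \(v\) give many edge-disjoint triangles at \(v\), and using \(\beta q/8\) of them brings the degree of \(v\) to at most \((1-\beta/8)q\).

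The difficulty is doing this simultaneously for all heavy vertices while keeping the triangles edge-disjoint. I would first try a random approach. Take the set of heavy vertices and, for each heavy \(v\), a random family of triangles through \(v\) in \(H\). Let each be selected independently with a small probability \(p\sim c\beta/q\). Delete conflicting pairs (triangles sharing an edge), and check by concentration, using codegree bounds in \(H\), that each heavy vertex keeps \(\Omega(\beta q)\) selected triangles. The alternative is a greedy procedure in which each triangle \(vxy\) chosen for \(v\) avoids the at most \(O(\beta q)\) edges already used at \(v\), \(x\), \(y\). The edge density \(\tfrac34\beta q^2\) inside \(N_H(v)\) survives removal of \(O(\beta q^2/8)\) used edges, provided the total number of used edges incident to \(N_H(v)\) is controlled. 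That control would come from charging each used edge to one heavy vertex, with at most \(\beta q/8\) triangles per heavy vertex. Once every vertex has \(\mathcal H\)-degree at most \((1-\beta/8)q\), the colouring step above finishes the proof with \(\gamma=\beta/8\).
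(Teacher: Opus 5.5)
Your proposal is correct and is essentially the paper's proof. You greedily pack edge-disjoint triangles at high-degree vertices of \(H\), form the linear rank-\(3\) hypergraph of packed triangles and leftover edges, and list-edge-colour it from \(C\) via Theorem~\ref{dec:thm:Kahn}. The paper ties its thresholds to \(\e\) (heavy means \(d_H(v)>(1-10\e)q\), about \(10\e q\) triangles per heavy vertex, \(\eta=\e\), \(60\e_0<\beta\)), where you tie them to \(\beta\). It also rules out small \(\Delta(\mathcal H)\) using \(e(H)\ge\beta q^2\), whereas you colour greedily in that case.

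The one point to make explicit in your greedy step, as the paper does, is the stopping rule. Add a triangle at \(v\) only while \(v\) lies in fewer than \(\beta q/8\) packed triangles in total, counting triangles charged to other vertices too, not merely those charged to \(v\). Charging alone does not bound the number of used edges at \(v\). With this rule, at most
\[
3|\mathcal P|+2t(v)q\le\tfrac38\beta q^2+\tfrac14\beta q^2+O(q)<\tfrac34\beta q^2-q
\]
candidate edges of \(H[N_H(v)]\) fail, so your constants do close.
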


\begin{proof}
Choose \(\e_0>0\) so that
\begin{equation}\label{dec:eq:3.7}
 60\e_0<\beta.
\end{equation}

We first construct a family \(\mathcal P\) of edge-disjoint triangles in
\(H\).  For \(v\in D\), let \(t_{\mathcal P}(v)\) be the number of members
of \(\mathcal P\) containing \(v\).  Starting with
\(\mathcal P=\varnothing\), for every vertex \(v\) with \(d_H(v)>(1-10\e) q\), as long as
\begin{equation*}
t_{\mathcal P}(v)<\lceil10\e q\rceil-1,
\end{equation*}
choose a new triangle through \(v\) and charge
that triangle to \(v\). If no such vertex exists initially, then \(\mathcal P\) remains empty.
Each vertex receives at most \(\lceil10\e q\rceil-1\) charges, so
throughout the process
\begin{equation*}
 |\mathcal P|\le\sum_{v\in D}(\lceil10\e q\rceil-1)<10\e q^2.
\end{equation*}

It remains to verify that the required new triangle always exists.
By \eqref{dec:eq:3.5} and \eqref{dec:eq:3.7},
\begin{equation}\label{dec:eq:3.11}
 e\bigl(H[N_H(v)]\bigr)
 \ge e(H)-q|D\setminus N_H(v)|
 >\left(\beta-10\e\right)q^2
 >50\e q^2.
\end{equation}
A candidate edge \(xy\) in \(H[N_H(v)]\) fails to give an edge-disjoint
new triangle \(vxy\) only if \(xy\) is already used in \(\mathcal P\) or one of \(x,y\) lies in
\(N_{\mathcal P}(v)\).  Thus at most
\begin{equation*}
 3|\mathcal P|+q d_{\mathcal P}(v)<30\e q^2+2qt_{\mathcal P}(v)<50\e q^2
\end{equation*}
candidate edges fail. \eqref{dec:eq:3.11} therefore supplies a suitable
triangle.  The process terminates with
\begin{equation}\label{dec:eq:3.12}
 t_{\mathcal P}(v)\ge \lceil10\e q\rceil-1\qquad \text{for\ }v\in D,\ d_H(v)>(1-10\e) q.
\end{equation}

Define a hypergraph \(\mathcal H\) on vertex set \(D\).  Its \(3\)-edges
are the vertex sets of the triangles in \(\mathcal P\), and its \(2\)-edges
are the edges of \(H\) that belong to no member of \(\mathcal P\).  The
triangles in \(\mathcal P\) are edge-disjoint, so any two hyperedges of
\(\mathcal H\) meet in at most one vertex; that is, \(\mathcal H\) is
linear.  For every \(v\in D\), \eqref{dec:eq:3.12} gives
\begin{equation*}
 d_{\mathcal H}(v)
 =t_{\mathcal P}(v)+d_H(v)-2t_{\mathcal P}(v)
 =d_H(v)-t_{\mathcal P}(v)
 \le(1-10\e)q.
\end{equation*}
Moreover,
\begin{equation*}
 \sum_{v\in D}d_{\mathcal H}(v)
 =2\bigl(e(H)-3|\mathcal P|\bigr)+3|\mathcal P|
 =2e(H)-3|\mathcal P|
 >\left(2\beta-30\e\right)q^2
 >90\e q^2.
\end{equation*}
Therefore, 
\begin{equation}\label{dec:eq:3.15}
 90\e q<\Delta({\mathcal H})\le(1-10\e)q,
 \qquad \Delta_2(\mathcal H)\le1.
\end{equation}

For each hyperedge \(e\in E(\mathcal H)\), assign the list
\begin{equation*}
 L(e)=\{c\in C:(c,x)\notin M\cup U\text{ for every }x\in e\}.
\end{equation*}
Since \(|e|\le3\) and every vertex of \(D\) has degree at most
\(3\e q\) in \(M\cup U\),
\begin{equation*}
 |L(e)|\ge(1-9\e)q.
\end{equation*}
Apply Theorem~\ref{dec:thm:Kahn} with rank at most \(3\) and
\(\eta=\e\).  For fixed \(\e>0\), \eqref{dec:eq:3.15} ensures, once \(q\) is
sufficiently large, both \(\Delta({\mathcal H})\ge D_0\) and
\(1\le\delta \Delta({\mathcal H})\).  Finally, for sufficiently large \(q\), \eqref{dec:eq:3.15} gives
\begin{equation*}
 (1+\e)\Delta({\mathcal H})
 \le(1+\e)(1-10\e)q
 =(1-9\e-10\e^2)q
 <(1-9\e)q.
\end{equation*}
Thus there is a proper edge-coloring \(\phi:E(\mathcal H)\to C\) with \(\phi(e)\in L(e)\) for every hyperedge \(e\).

If a \(2\)-edge \(e=\{u,v\}\) has color \(c\), take the triangle on
\(\{c,u,v\}\).  If a \(3\)-edge \(e=\{u,v,w\}\) has color \(c\),
take the \(K_4\) on \(\{c,u,v,w\}\).  Membership of \(c\) in \(L(e)\) ensures that every
required crossing edge is present and unused. The resulting cliques cover every edge of
\(H\) exactly once. If two resulting cliques used
the same crossing edge \(cx\), their hyperedges would both contain \(x\) and
would both have color \(c\), contrary to the properness of \(\phi\). So the asserted total cost identity follows.

\end{proof}

We then turn to the exceptional vertices, that is, the remaining vertices outside \(C\cup D\).
We will reassign these vertices to the two sides depending on their degrees in each side. We prefer each vertex to have smaller degree in its own side.
Then we cover all edges incident to the exceptional vertices.

We will repeatedly use the following robust form of Hall's theorem for a nearly complete
bipartite graph.  It bounds the matching deficiency using only the maximum degree of the set of forbidden pairs.

\begin{lemma}
  \label{dec:lem:hall}
Let \(A,B\) be finite sets with \(|A|\le |B|\), and let
\(W\subseteq A\times B\) have maximum degree at most \(z\).
Then \(K_{A,B}-W\) has a matching leaving at most
\begin{equation*}
  \max\{0,2z-|B|\}
\end{equation*}
vertices of \(A\) unmatched.  In particular, it saturates \(A\) whenever
\(|B|\ge2z\).
\end{lemma}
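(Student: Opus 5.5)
We prove Lemma~\ref{dec:lem:hall} via the defect form of Hall's theorem (K\H{o}nig--Ore). In the bipartite graph \(\Gamma:=K_{A,B}-W\), the maximum matching leaves exactly
\begin{equation*}
  \max_{S\subseteq A}\bigl(|S|-|N_\Gamma(S)|\bigr)
\end{equation*}
vertices of \(A\) unmatched, with \(S=\varnothing\) giving \(0\). It therefore suffices to show that every nonempty \(S\subseteq A\) satisfies
\begin{equation*}
  |S|-|N_\Gamma(S)|\le\max\{0,2z-|B|\}.
\end{equation*}

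We split according to the size of \(S\) relative to \(z\).

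\emph{Case \(|S|\le z\).} Any single vertex \(a\in S\) has at most \(z\) forbidden partners, so \(|N_\Gamma(S)|\ge|N_\Gamma(a)|\ge|B|-z\). Hence
\begin{equation*}
  |S|-|N_\Gamma(S)|\le z-(|B|-z)=2z-|B|.
\end{equation*}

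\emph{Case \(|S|>z\).} Every \(b\in B\) has at most \(z<|S|\) forbidden partners in \(S\), so \(b\) is adjacent in \(\Gamma\) to some vertex of \(S\). Thus \(N_\Gamma(S)=B\), and \(|S|-|B|\le|A|-|B|\le0\).

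Combining the two cases gives the deficiency bound \(\max\{0,2z-|B|\}\). The ``in particular'' statement follows at once: when \(|B|\ge2z\) the bound is \(0\), so the matching saturates \(A\).

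There is no real obstacle here. The only point needing care is that the two cases use opposite degree bounds on \(W\): Case 1 uses the degree bound at vertices of \(A\), and Case 2 uses it at vertices of \(B\). Both are covered because \(z\) bounds the maximum degree of \(W\) on both sides.
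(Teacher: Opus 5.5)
Your proof is correct and is essentially the paper's argument. Both use the defect form of Hall's theorem with the same two degree bounds on $W$: the bound at a vertex of $B$ forces $N(S)=B$ once $|S|>z$, and the bound at a vertex of $A$ gives $|N(S)|\ge |B|-z$. The paper merely splits cases as $N(X)=B$ versus $N(X)\ne B$ instead of $|S|>z$ versus $|S|\le z$.
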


\begin{proof}
For nonempty \(X\subseteq A\) with \(N(X)\ne B\), choose
\(x\in X\) and \(y\in B\setminus N(X)\).  Since
\(X\times\{y\},\{x\}\times(B\setminus N(X))\subseteq W\), the degree
bound gives \(|X|\le z\) and \(|B\setminus N(X)|\le z\).  Thus the Hall
deficiency is at most \(2z-|B|\); the empty case and the case \(N(X)=B\)
are immediate.
\end{proof}

We now give the exact construction for the exceptional vertices.

\begin{lemma}
  \label{dec:lem:exceptional-extension}
Let \(G\) be a graph. Let \(C,D\) be disjoint \(q\)-sets of \(V(G)\) and let \(R:=V(G)\setminus(C\dotcupunion D)\) be a disjoint set of \(N\)
exceptional vertices.
Let \(M\subseteq C\times D\) be
the set of missing \(C\)--\(D\) pairs, and let
\(U\subseteq C\times D\) be the set of crossing edges used by a
previously selected edge-disjoint family of cliques of total cost
\(|U|\). Assume the previously selected family contains no \(G[R]\) or \(G[R, C\cup D]\) edge.  Thus every member of \(U\) is present in \(G\).
Suppose
\begin{equation*}
  \Delta_{C\cup D}(M\cup U)\le z.
\end{equation*}
If
\begin{equation*}
  q>2z+8N,
\end{equation*}
then there is a bipartition \(R=S\dotcupunion T\) and an exact clique
decomposition of \(E(G[R])\), all edges between \(R\) and \(C\cup D\), and
all present \(C\)--\(D\) edges outside \(U\).  For the final bipartition
\begin{equation*}
  P=C\dotcupunion S,\qquad Q=D\dotcupunion T,
\end{equation*}
adjoining the previously selected cliques gives total cost at most
\begin{equation*}
  q^2+qN+e_{G[R]}(S,T).
\end{equation*}
\end{lemma}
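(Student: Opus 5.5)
The plan is to build the decomposition vertex by vertex over \(R\), charging every clique to crossing pairs of the final bipartition \(P\dotcupunion Q\). Note that
\begin{equation*}
|P||Q|=(q+|S|)(q+|T|)\ge q^2+qN,
\end{equation*}
but the target is the smaller quantity \(q^2+qN+e_{G[R]}(S,T)\). So the budget is: at most \(q^2\) for all \(C\)--\(D\) pairs, at most \(q\) for each \(r\in R\), and one unit for each \(S\)--\(T\) edge inside \(R\).

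\textbf{Step 1: the bipartition of \(R\).} Put \(r\) into \(S\) if \(d_C(r)\le d_D(r)\), and into \(T\) otherwise. This follows the rule that each exceptional vertex should have smaller degree on its own side. For \(r\in S\), the edges \(rd\) with \(d\in D\) are crossing, while the edges \(rc\) with \(c\in C\) are internal and must be absorbed at no extra cost. I would absorb \(rc\) in a triangle \(rcd\) with \(d\in N_D(r)\), where \(cd\) is a present \(C\)--\(D\) edge outside \(U\) and not yet used. This triangle has cost \(2\) and covers \(rc,rd,cd\). The pair \(cd\) would otherwise have been covered by a single edge of cost \(1\), and \(rd\) is paid from the per-vertex budget, so \(rc\) costs nothing. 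The symmetric construction handles \(r\in T\).

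\textbf{Step 2: matchings via Lemma~\ref{dec:lem:hall}.} Process the vertices of \(R\) one at a time. For the current \(r\in S\), seek a matching from \(N_C(r)\) into \(N_D(r)\) avoiding forbidden pairs, namely \(M\), \(U\), and the \(C\)--\(D\) pairs used by triangles of earlier exceptional vertices. Each earlier vertex raises the forbidden degree at any vertex of \(C\cup D\) by at most \(1\), so the forbidden degree stays at most \(z':=z+N\). Since \(|N_C(r)|\le|N_D(r)|\), Lemma~\ref{dec:lem:hall} leaves at most \(\max\{0,2z'-d_D(r)\}\) vertices of \(N_C(r)\) unmatched, and those edges are covered as single edges. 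The cost charged to \(r\) is then \(d_D(r)\) plus the unmatched count, which is at most \(\max\{d_D(r),2z'\}\le q\) because \(q>2z+8N\). After all of \(R\) is processed, every remaining present \(C\)--\(D\) edge outside \(U\) becomes a single edge. The total cost on \(C\times D\) is then at most \(|U|+(q^2-|M|-|U|)\le q^2\).

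\textbf{Step 3: edges inside \(R\) (the main obstacle).} Edges of \(G[R]\) between \(S\) and \(T\) are crossing and are paid one each, giving exactly the term \(e_{G[R]}(S,T)\). Edges \(rr'\) inside \(S\) (or inside \(T\)) must be free. For these I would use triangles \(rr'x\) with \(x\in D\) a common neighbour whose pairs \(rx,r'x\) are not used in Step 2. This requires reserving such neighbours before running the matchings. The difficulty is that when \(d_D(r)\) is small, \(r\) has few free \(D\)-neighbours. My first attempt is to exploit the slack \(8N\) in \(q>2z+8N\) (which permits reserving up to about \(4N\) extra vertices per exceptional vertex), together with a case split. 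If \(d_D(r)\ge 2z'+2N\), then reserve \(N\) free \(D\)-neighbours for \(r\)'s internal \(R\)-edges before matching. Otherwise, \(r\)'s total cost \(d_D(r)+d_C(r)+d_R(r)\) is at most \(2d_D(r)+N\le 4z'+3N\), and I would check that this stays below \(q\) when each internal edge is paid as a single edge, charging it to the lower-indexed endpoint. If this bookkeeping fails, I would refine the choice of \(S,T\) to a local optimum of the cost functional, moving single vertices between \(S\) and \(T\), so that each \(r\) has at most as many \(R\)-neighbours on its own side as on the other. The exceptional \(R\)-edges can then be absorbed at cost \(1\) against the \(e_{G[R]}(S,T)\) term. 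Summing over \(C\times D\), over the vertices of \(R\), and over \(E(G[R])\) then gives the bound \(q^2+qN+e_{G[R]}(S,T)\).
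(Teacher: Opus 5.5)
There is a genuine gap, and it is in Step~3, which is where essentially all of the paper's work lies. Your Steps 1 and 2 are fine and match the paper's basic bookkeeping. With the Hall matching from the low side into the high side, the cost charged to \(r\) is at most \(\max\{d_D(r),2z'\}\le q\), i.e.\ the unmatched internal edges at \(r\) are paid for by the missing crossing pairs at \(r\).

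Neither of your Step~3 attempts works. Take \(M=U=\varnothing\), \(G[C,D]\) complete, and \(R\) a clique on \(N\ge3\) vertices, each complete to \(C\cup D\). Here there is no slack:
\begin{itemize}
\item \(|M|=0\);
\item no crossing pair at \(R\) is missing;
\item each \(r\in R\) is already charged exactly \(q\).
\end{itemize}
So every edge of \(G[R]\) inside a side must be covered at zero extra cost.

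Your triangle \(rr'x\) with \(x\in D\) (for \(r,r'\in S\)) spends a \(D\)-edge at both endpoints while their \(C\)-degrees stay \(q\). Each endpoint therefore acquires one more unmatched internal \(C\)-edge, so you pay two to save one. A triangle through \(C\) lies entirely inside \(P\) and is worse still.

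The fallback fails too. After a local max-cut, about \(N^2/4\) edges of \(G[R]\) are still internal. Meanwhile the term \(e_{G[R]}(S,T)\) in the target is already used up exactly by the crossing \(R\)-edges themselves. So ``absorbing the internal \(R\)-edges against \(e_{G[R]}(S,T)\)'' double counts.

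There are two smaller problems as well:
\begin{itemize}
\item \(4z'+3N=4z+7N\) need not be below \(q\) when only \(q>2z+8N\) is known.
\item The \(N\) reserved \(D\)-neighbours of \(r\) need not be neighbours of \(r'\). Two vertices with \(d_D\approx q/2\) can have no common \(D\)-neighbour at all.
\end{itemize}

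What is missing is a slack-based classification together with a balancing gadget.
\begin{itemize}
\item A vertex with \(q-h_v\ge N\) (the paper's \(R_1\)) can simply keep all its \(R\)-edges as \(2\)-cliques, because \(N-1+\max\{0,2z+4N-h_v\}<q-h_v\). This also removes the common-neighbour issue, since the remaining vertices are nearly complete to their high side.
\item For vertices nearly complete to both sides (\(R_3\)), an edge \(uv\) is covered by a \(K_4\) on \(\{u,v,c,d\}\), where \(cd\) is a fresh present \(C\)--\(D\) edge in the common neighbourhood. It uses one \(C\)-edge and one \(D\)-edge at each of \(u,v\). The remaining \(C\)- and \(D\)-degrees therefore drop equally, and the later saturating matchings still pay for everything. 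It even saves one when \(u,v\) end up on opposite sides.
\item For vertices with \(h_v-l_v>N\) (\(R_2\)), triangles through the common high side are free, because the imbalance absorbs them.
\item An \(R_3\)-vertex \(v\) with many \(R_2\)-neighbours sharing its low side would have its high side drained by such triangles. The paper compares \(|I_v|-|O_v|\) with \(h_v-l_v\) and moves \(v\) to the other side when necessary, so that the excess edges become crossing.
\end{itemize}
Without the \(K_4\) gadget and this switching rule, your scheme does not reach the bound \(q^2+qN+e_{G[R]}(S,T)\).
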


\begin{proof}
If \(N=0\), retain every present \(C\)--\(D\) edge outside \(U\) as a
\(2\)-clique, and set \(R=R_1=R_2=S=T=\varnothing\), \(s=r=m=0\).
After adjoining the previously selected cliques, the cost is \(q^2-|M|\), so
the exact identity \eqref{dec:eq:3.34} below holds.  Thus assume \(N\ge1\).

For \(v\in R\), put
\begin{equation*}
  l_v=\min\{d(v,C),d(v,D)\},\qquad
  h_v=\max\{d(v,C),d(v,D)\}.
\end{equation*}
Let \(L_v\) and \(H_v\) be the members of \(\{C,D\}\) with, respectively, \(l_v\) and \(h_v\) neighbors of \(v\).
If \(l_v=h_v\), simply choose \(L_v=C\) and \(H_v=D\). Initially assign \(v\) to the side containing \(L_v\). Let
\begin{equation}\label{dec:eq:def R1R2R3}
  \begin{aligned}
    R_1 &:= \{v\in R:q-h_v\ge N\},\\
    R_2 &:= \{v\in R:q-h_v<N, h_v-l_v> N\},\\
    R_3 &:= \{v\in R:q-h_v<N, h_v-l_v\le N\}.
  \end{aligned}
\end{equation}
Keep every member of \(R_1\) and \(R_2\) on its initial side. For \(v\in R_3\), let
\begin{equation*}
  \begin{aligned}
    I_v &:= \{u\in N_{R_2}(v):L_u=L_v\},\\
    O_v &:= \{u\in N_{R_2}(v):L_u=H_v\}.
  \end{aligned}
\end{equation*}
Keep \(v\) on its initial side if
\begin{equation*}
  |I_v|-|O_v|\le h_v-l_v,
\end{equation*}
and move it to the other side otherwise.

Now we have partitioned \(R=S\dotcupunion T\) into two sides. Let \(P=C\dotcupunion S\) and \(Q=D\dotcupunion T\).
Let \(P\dotcupunion Q\) be the final bipartition. The edges of \(G\) except those already used in \(U\) are decomposed
by the following steps carried out in order. Throughout the construction, we require that each \(c\in C\)(and each \(d\in D\))
uses at most \(N\) \(C\)--\(D\) edges in each step except for step~\ref{dec:step:h}.
We call this the \emph{maximum degree constraint}.

\begin{enumerate}[label=\textup{(\alph*)}]

\item Retain every edge of \(G[R]\) incident to \(R_1\) as a \(2\)-clique and assign it to one endpoint in \(R_1\),
choosing one endpoint arbitrarily if both lie in \(R_1\). Each vertex of \(R_1\) receives at most \(N-1\) assignments.\label{dec:step:a}

\item For every \(uv\in E(R_2)\), if \(L_u=L_v\), use a triangle through a common neighbor of \(u,v\) in \(H_u=H_v\).
There are at least \(q-4N\) unused choices for that common neighbor. Retain \(uv\) as a crossing \(2\)-clique otherwise.\label{dec:step:b}

\item For every \(uv\in E(R_3)\), choose a present unused \(C\)--\(D\) edge \(cd\) such that both \(u\) and \(v\) are
adjacent to both \(c\) and \(d\). Use the \(K_4\) on \(\{u,v,c,d\}\). This is possible because \(u\) and \(v\) have at
least \(q-6N\) unused common neighbors in each of \(C\) and \(D\), among which at most \(\binom{N}{2}/N<N\) are exhausted
by the maximum degree constraint. At each of the remaining \(q-7N\) choices for \(c\in C\cap N(u)\cap N(v)\),
there are at least \((q-7N)-N-z\) choices for \(d\in D\cap N(u)\cap N(v)\) such that \(cd\) is present and unused.\label{dec:step:c}

\item For every \(v\in R_3\), suppose first that \(v\) is kept on its initial side.
For each \(u\in I_v\), \(H_u=H_v\). Choose \(x\in H_u\cap N(u)\cap N(v)\) and use the triangle \(vux\).
And for each \(w\in O_v\), \(H_w=L_v\). Choose \(y\in H_w\cap N(w)\cap N(v)\) and use the triangle \(vwy\).
Each \(u\)(or \(w\)) and \(v\) have at least \(q-3N\) common neighbors in \(H_u\)(or \(H_w\)),
among which at most \(2N\) are previously used. Thus there are at least \(q-5N\) choices for the third vertex of each triangle.

Suppose now that \(v\) is moved to the other side, then  \(|I_v|>|O_v|+h_v-l_v\). Thus \(O_v\) is on the same side
as \(v\) and \(I_v\) is on the opposite side to \(v\) now. Choose an arbitrary \(I'_v\subseteq I_v\) with \(|I'_v|=|O_v|+h_v-l_v\).
For each \(u\in I'_v\), \(H_u=H_v\). Choose \(x\in H_u\cap N(u)\cap N(v)\) and use the triangle \(vux\).
And for each \(w\in O_v\), \(H_w=L_v\). Choose \(y\in H_w\cap N(w)\cap N(v)\) and use the triangle \(vwy\).
The construction for each triangle is well-defined for the same reason as above. Retain all remaining crossing \(v\)--\(I_v\) edges as \(2\)-cliques.\label{dec:step:d}

\item For every \(v\in R_3\), step~\ref{dec:step:c} uses \(d_{R_3}(v)\) \(v\)--\(L_v\) and \(v\)--\(H_v\) edges each.
If \(v\) is kept on its initial side, step~\ref{dec:step:d} uses \(|O_v|\) \(v\)--\(L_v\) edges and \(|I_v|\) \(v\)--\(H_v\) edges.
If \(v\) is moved to the other side, step~\ref{dec:step:d} uses \(|O_v|\) \(v\)--\(L_v\) edges and \(|O_v|+h_v-l_v\) \(v\)--\(H_v\) edges.
Let the remaining unused neighbors of \(v\) in \(H_v\) and \(L_v\) be \(H'_v\) and \(L'_v\) respectively.
Then in both cases \(v\) has at least as many crossing neighbors as internal neighbors in \(C\cup D\) now.
We claim that Lemma~\ref{dec:lem:hall} gives a matching in \(G[L'_v,H'_v]\) saturating the smaller side.
The graph consisting of forbidden \(L'_v\)--\(H'_v\) edges has maximum degree at most \(z+2N\):
the forbidden edges from \(M\cup U\) contribute at most \(z\), the \(K_4\)'s constructed in step~\ref{dec:step:c} contribute
at most \(N\), and each processed vertex of \(R_3\) in step~\ref{dec:step:e} before \(v\) contributes at most one.
And previous analysis shows that the larger side contains at least \(q-4N>2(z+2N)\) vertices.
For every matched pair \(u\in L'_v,w\in H'_v\), use the triangle \(vuw\).
Retain all remaining edges incident to \(v\) that cross the final bipartition as \(2\)-cliques.\label{dec:step:e}

\item For every \(v\in R_2\), steps~\ref{dec:step:b} and \ref{dec:step:d} use at most \(N\) \(v\)--\(H_v\) edges in total.
Also let the remaining unused neighbors of \(v\) in \(H_v\) and \(L_v\) be \(H'_v\) and \(L'_v\) respectively.
Then \(v\) has at least
\begin{equation*}
h_v-N>q-2N
\end{equation*}
neighbors in \(H'_v\) now, which is at least as large as \(|L'_v|\). 
And the graph consisting of forbidden \(L'_v\)--\(H'_v\) edges has maximum degree at most \(z+2N\):
the forbidden edges from \(M\cup U\) contribute at most \(z\), the \(K_4\)'s constructed in step~\ref{dec:step:c} contribute
at most \(N\), and each processed vertex of \(R_3\) and \(R_2\) in steps~\ref{dec:step:e} and \ref{dec:step:f}
before \(v\) contributes at most one. So Lemma~\ref{dec:lem:hall} again gives a matching in \(G[L'_v,H'_v]\) saturating \(L'_v\).
For every matched pair \(u\in L'_v,w\in H'_v\), use the triangle \(vuw\).
Retain all remaining edges incident to \(v\) that cross the final bipartition as \(2\)-cliques.\label{dec:step:f}

\item For every \(v\in R_1\), the graph consisting of forbidden \(C\)--\(D\) edges has maximum degree at most \(z+2N\):
the forbidden edges from \(M\cup U\) contribute at most \(z\), the \(K_4\)'s constructed in step~\ref{dec:step:c} contribute
at most \(N\), and each previously processed vertex of \(R\) in steps~\ref{dec:step:e}--\ref{dec:step:g} before \(v\) contributes
at most one. Also let the remaining unused neighbors of \(v\) in \(H_v\) and \(L_v\) be \(H'_v\) and \(L'_v\) respectively.
Lemma~\ref{dec:lem:hall} gives a matching in \(G[L'_v,H'_v]\) with deficiency at most \(\max\{0,2z+4N-h_v\}\).
Use a triangle \(vuw\) for every edge \(uw\) of this matching.  Retain the unmatched edges from \(v\) to \(L_v\) and
every unmatched crossing edge from \(v\) as \(2\)-cliques.\label{dec:step:g}

\item Retain all remaining \(C\)--\(D\) edges as \(2\)-cliques.\label{dec:step:h}
\end{enumerate}

The construction covers each edge of \(G[R]\), each edge between \(R\) and \(C\cup D\),
and each present \(C\)--\(D\) edge outside \(U\) exactly once. Relative to the final bipartition \((P, Q)\),
every triangle we used has zero cost saving. A \(K_4\) constructed in step~\ref{dec:step:c} has saving one
exactly when its two exceptional vertices lie on opposite sides. Let \(s\) count such \(K_4\)'s.
A \(2\)-clique has one additional cost when both of its vertices lie on the same side, which,
according to our construction, is exactly when it is an internal \(2\)-clique constructed in
step~\ref{dec:step:a} or an unmatched internal \(2\)-clique constructed in step~\ref{dec:step:g}.
Let \(r\) count such internal \(2\)-cliques. Let \(m\) count the missing pairs between \(R\)
and \(C\cup D\) crossing the final bipartition. Every fixed \(v\in R_1\) contributes at most \(N-1\) internal
\(2\)-cliques in step~\ref{dec:step:a} and at most \(\max\{0,2z+4N-h_v\}\) unmatched internal \(2\)-cliques
in step~\ref{dec:step:g}. However, it also contributes \(q-h_v\) missing edges between \(R\) and \(C\cup D\). And we have
\begin{equation}\label{dec:eq:add vs save}
  N-1+\max\{0,2z+4N-h_v\}< q-h_v.
\end{equation}
Summing \eqref{dec:eq:add vs save} over \(R_1\) gives \(r\le m\) and equality holds only when \(R_1\) is empty.
Let \(\mathcal D\) be the union of the constructed decomposition and the previously selected decomposition of \(U\) which
has cost \(|U|\). Then \eqref{dec:eq:1.2} gives
\begin{equation}\label{dec:eq:3.34}
  \begin{aligned}
     \cost(\mathcal D)&=(q^2-|M|)+(qN-m)+e_{G[R]}(S,T)+r-s\\
     &\le q^2+qN+e_{G[R]}(S,T).
  \end{aligned}
\end{equation}
\end{proof}

The next theorem combines the preceding lemmas to obtain an exact clique decomposition given
a balanced bipartition and records the resulting cost through a nonnegative slack.

\begin{theorem}
\label{dec:thm:stable-decomposition}
There are absolute constants \(\xi>0\) and \(k^\ast\) such that the
following holds.  Let \(\sigma\in\{0,1\}\), \(k\ge k^\ast\), \(n=2k+\sigma\), and
\begin{equation*}
  V(G)=A\dotcupunion B,\qquad
  \{|A|,|B|\}=\{k,k+\sigma\}.
\end{equation*}
Let \(M\) be the set of missing \(A\)--\(B\) pairs. If
\begin{equation*}
  e(G[A])+|M|\le\xi k^2,
\end{equation*}
then there is an exact decomposition \(\mathcal D\) and a nonnegative integer \(\Omega\) such that
\begin{equation}\label{dec:eq:3.36}
 \cost(\mathcal D)=\lfloor \frac{n^2}{4}\rfloor-\Omega.
\end{equation}
The construction also produces a final bipartition \(V(G)=P\dotcupunion Q\).
Moreover, if \(\Omega=0\), then \((P,Q)\) are balanced and
\begin{equation*}
 G[P,Q]=K_{|P|,|Q|}.
\end{equation*}
\end{theorem}

\begin{proof}
\setcounter{proofcase}{0}
Put \(n=|V(G)|=2k+\sigma\).
Let \(\e_0\) be supplied by Lemma~\ref{dec:lem:dense-completion} with \(\beta=10^{-5}\).
Choose \(0<\e<\min\left\{\e_0, 10^{-3}\right\}\). Then choose
\begin{equation}\label{dec:eq:3.38}
  0<\xi\le\frac{\e^3}{4},
  \qquad \xi+2\beta\le10^{-4}.
\end{equation}
All inequalities below involve only these fixed constants, so one absolute threshold \(k^\ast\) will suffice.
\proofcase{dec:case:both-sparse}
  {Both \(G[A]\) and \(G[B]\) are sparse}
Suppose \(e(G[B])\le2\beta k^2\).  Then
\begin{equation*}
  e(G[A])+e(G[B])+|M|\le10^{-4}k^2.
\end{equation*}
Let 
\begin{equation*}
  \begin{aligned}
    X_0&=\{v\in A:d_{G[A]}(v)+d_M(v)\ge \lceil k/100\rceil\},\\
    Y_0&=\{v\in B:d_{G[B]}(v)+d_M(v)\ge \lceil k/100\rceil\}.
  \end{aligned}
\end{equation*}
Then
\begin{equation*}
 |X_0|+|Y_0|\le\frac{2\cdot10^{-4}k^2}{\lceil k/100\rceil}\le\frac k{50}.
\end{equation*}
Set \(l=\max\{|X_0|,|Y_0|+|A|-|B|\}\), enlarge \(X_0,Y_0\) to sets
\(X\subseteq A\), \(Y\subseteq B\) of orders \(l,l-|A|+|B|\), and put
\(C=A\setminus X\), \(D=B\setminus Y\).  Then, with \(R=X\cup Y\),
\begin{equation*}
 |C|=|D|=q=|A|-l\ge\frac{49k}{50},\qquad
 N:=|R|=2l-|A|+|B|.
\end{equation*}
Every vertex of \(C\cup D\) has internal degree plus missing crossing degree at most \(\lceil k/100\rceil-1\).

Apply Lemma~\ref{dec:lem:triangle-extension} to \(G[C]\), with \(M[C,D]\) forbidden,
no previously used crossing edges, and both parameters \(\lceil k/100\rceil-1\).
Its numerical condition holds for large \(k\), since
\begin{equation*}
 2(\lceil k/100\rceil-1)+\frac{2e(G[C])}{\lceil k/100\rceil}
 <\frac k{50}+\frac k{50}<q
\end{equation*}
for sufficiently large \(k\). Let \(\mathcal T_C\) be the resulting triangle family and \(U_C\) its set
of used crossing edges. The lemma gives
\begin{equation}\label{dec:eq:3.41}
 d_{U_C}(c)\le \lceil k/100\rceil-1\quad \text{for\ }c\in C,\qquad
 d_{U_C}(d)\le \lceil k/100\rceil+1\quad \text{for\ }d\in D.
\end{equation}

Apply the same lemma with the roles of \(C,D\) reversed, to \(G[D]\),
with \(M[C,D]\) missing, \(U_C\) as the previously used crossing edges, and parameters \(2\lceil k/100\rceil\) and
\(\lceil k/100\rceil-1\).  Indeed, \eqref{dec:eq:3.41} gives the required local bound \(2\lceil k/100\rceil\), and
\begin{equation*}
 2\cdot2\lceil k/100\rceil+\frac{|U_C|+2e(G[D])}{\lceil k/100\rceil}
 \le4\lceil k/100\rceil+\frac k{50}<q
\end{equation*}
for sufficiently large \(k\).  Let \(\mathcal T_D\) be the resulting family and \(U_D\) its set of used crossing edges.
Thus \(\mathcal T:=\mathcal T_C\cup\mathcal T_D\) covers exactly \(E(G[C])\cup E(G[D])\).
With \(U:=U_C\cup U_D\), the second application gives
\begin{equation*}
 \max_{c\in C}d_{U}(c)\le2\lceil k/100\rceil,
 \qquad \max_{d\in D}d_{U}(d)\le2\lceil k/100\rceil.
\end{equation*}
Together with the local bound defining \(C\cup D\), this shows that
\begin{equation*}
 \cost(\mathcal T)=|U|,
 \qquad
 \Delta\bigl(M[C,D]\cup U\bigr)< z:=3\lceil k/100\rceil.
\end{equation*}
The two applications use disjoint crossing edges, and their internal edge sets lie in opposite sides.

For all sufficiently large \(k\),
\begin{equation*}
  2z+8N
  =6\lceil k/100\rceil+8N
  \le0.38k+14<q.
\end{equation*}
Apply Lemma~\ref{dec:lem:exceptional-extension} to \(R,C,D\), treating the
family \(\mathcal T\) as previously selected.  Those triangles cover exactly
\(E(G[C])\cup E(G[D])\); the exceptional-vertex construction covers every
edge of \(G[R]\), every edge between \(R\) and \(C\cup D\) and
the remaining present \(C\)--\(D\) edges.  Thus every edge of \(G\) is covered exactly once.
\proofcase{dec:case:dense-side}
  {\(G[B]\) is dense}
Suppose \(e(G[B])>2\beta k^2\).  Define
\begin{equation*}
 X_0=\{v\in A:d_{G[A]}(v)+d_M(v)\ge\e k\},\qquad
 Y_0=\{v\in B:d_M(v)\ge\e k\}.
\end{equation*}
The sum of the quantities in these two degree conditions is at most
\begin{equation*}
  2e(G[A])+2|M|\le2\xi k^2.
\end{equation*}
Consequently,
\begin{equation*}
  |X_0|+|Y_0|\le\frac{2\xi k^2}{\e k}\le\frac{\e^2k}{2}.
\end{equation*}
Choose the largest integer \(q\) for which there are \(q\)-sets
\(C\subseteq A\setminus X_0\) and \(D\subseteq B\setminus Y_0\), and
put \(X=A\setminus C\), \(Y=B\setminus D\), and \(N=|X|+|Y|=n-2q\).
The preceding bound implies that, for large \(k\),
\begin{equation}\label{dec:eq:3.44}
 q\ge(1-\e^2)k,\qquad
 \max\{|X|,|Y|\}\le\e^2k,\qquad N\le2\e^2k.
\end{equation}

For \(c\in C\) and \(d\in D\), the definitions of \(X_0,Y_0\) give
\begin{equation}\label{dec:eq:dense control}
 d_{G[C]}(c)+d_{M[C,D]}(c)<\e k,
 \qquad d_{M[C,D]}(d)<\e k.
\end{equation}
Apply Lemma~\ref{dec:lem:triangle-extension} to \(G[C]\), with
\(M[C,D]\) forbidden, no previously used crossing edges, and the two parameters
\(\lceil\e k\rceil-1\) and \(\lceil\e k\rceil-3\).  Its numerical condition follows from
\begin{equation*}
 2(\lceil\e k\rceil-1)
 +\frac{2e(G[C])}{\lceil\e k\rceil-2}
 <2\e k+\frac{4\xi k^2}{\e k}<q,
\end{equation*}
where we used \eqref{dec:eq:3.38} and \eqref{dec:eq:3.44}.  Let \(\mathcal T_C\) be the resulting
triangle family and \(U_C\) its set of used \(C\)--\(D\) edges.  The degree
conclusions of Lemma~\ref{dec:lem:triangle-extension}, together with \eqref{dec:eq:dense control}, give
\begin{equation}\label{dec:eq:3.45}
 \cost(\mathcal T_C)=|U_C|,
 \qquad
 \Delta_{C\cup D}\bigl(M[C,D]\cup U_C\bigr)< z:=2\e k.
\end{equation}

Apply Lemma~\ref{dec:lem:exceptional-extension} to the exceptional set
\(R=X\cup Y\), with \(z\) as fixed in \eqref{dec:eq:3.45}, because
\begin{equation*}
  q-(2z+8N)
  \ge(1-4\e-17\e^2)k>0.
\end{equation*}
In implementing its construction, postpone step~\ref{dec:step:h} that retains
every still-unused present \(C\)--\(D\) edge as a \(2\)-clique.

The exceptional-vertex construction uses a \(C\)--\(D\) edge set of maximum
degree at most \(2N\): the \(K_4\)'s selected in step~\ref{dec:step:c}
use a set of maximum degree \(N\), and the matchings used in steps~\ref{dec:step:e}--\ref{dec:step:g}
for its \(N\) vertices each add at most one edge at a given vertex of
\(C\cup D\).  Let \(U_R\) be the \(C\)--\(D\) edges used by cliques 
in the exceptional-vertex construction other than the final \(2\)-cliques.
From \eqref{dec:eq:3.44} and \eqref{dec:eq:3.45},
\begin{equation*}
  \Delta_D(M[C,D]\cup U_C\cup U_R)
  <2\e k+2N<3\e q
\end{equation*}
for sufficiently large \(k\). Deleting \(Y\) from \(G[B]\) removes at most \(|Y||B|\) edges.
By \eqref{dec:eq:3.44} and \(e(G[B])>2\beta k^2\), we have
\begin{equation}\label{dec:eq:3.48}
  e(G[D])
  >(2\beta-2\e^2)k^2>\beta q^2.
\end{equation}
Apply Lemma~\ref{dec:lem:dense-completion} to
\(G[D]\) and the forbidden crossing pairs \(M[C,D]\cup U_C\cup U_R\).  It partitions
\(E(G[D])\) into triangles and \(K_4\)'s with one vertex in \(C\), using
previously unused crossing edges. For the cost calculation, imagine completing
the exceptional-vertex construction by first retaining all remaining
\(C\)--\(D\) edges as \(2\)-cliques.  Each clique supplied by
Lemma~\ref{dec:lem:dense-completion} then replaces exactly the \(2\)-cliques on the crossing
edges it uses and has the same total cost.  Thus exact coverage and the cost
are preserved.  Finally, retain every other unused present \(C\)--\(D\) edge as a \(2\)-clique.

The sparse-side triangle family, exceptional-vertex family, dense-side family,
and final \(2\)-cliques are edge-disjoint and exhaust \(E(G)\).

In both Case~\ref{dec:case:both-sparse} and Case~\ref{dec:case:dense-side}, let \(R=S\dotcupunion T\) and
\begin{equation*}
 P=C\dotcupunion S,\qquad Q=D\dotcupunion T,
\end{equation*}
be the partition supplied by Lemma~\ref{dec:lem:exceptional-extension}. Thus
\begin{equation*}
 |C|=|D|=q,\qquad |S|+|T|=N,\qquad n=2q+N.
\end{equation*}
Retain the notation from the exact identity \eqref{dec:eq:3.34}:
\(m\) is the number of missing pairs between \(R\) and
\(C\cup D\) that cross the final bipartition, \(r\) is the number of internal
\(2\)-cliques bounded by \eqref{dec:eq:add vs save}, and \(s\) is the number
of \(K_4\)'s whose exceptional vertices lie on opposite sides.

In either case the sparse-side(s) triangle family has cost equal to the number of
crossing edges it uses.  In the dense case, every clique supplied by
Lemma~\ref{dec:lem:dense-completion} has cost equal to the number of crossing-edge
\(2\)-cliques it replaces.  These replacements preserve
the exact identity \eqref{dec:eq:3.34}, which therefore holds for the final decomposition \(\mathcal D\).

Define
\begin{equation*}
\Omega:=(\lfloor \frac{n^2}{4}\rfloor-|P||Q|)+|M[C,D]|+(m-r)+m_G[S,T]+s.
\end{equation*}
Then \eqref{dec:eq:3.34} gives
\begin{equation*}
\begin{split}
 \cost(\mathcal D)
 &=(q^2-|M[C,D]|)+(qN-m)+e_{G[R]}(S,T)+r-s\\
 &=(q^2+qN+|S||T|)-\left[|M[C,D]|+(m-r)+\left(|S||T|-e_{G[R]}(S,T)\right)+s\right]\\
 &=|P||Q|-\left(|M[C,D]|+(m-r)+m_G[S,T]+s\right)\\
 &=\lfloor \frac{n^2}{4}\rfloor-\Omega.
\end{split}
\end{equation*}
Every summand in \(\Omega\) is a nonnegative integer. This proves \eqref{dec:eq:3.36}.

It remains to identify the equality case. Suppose that \(\Omega=0\). Then
\begin{equation*}
 \lfloor \frac{n^2}{4}\rfloor-|P||Q|=|M[C,D]|=m_G[S,T]=s=0,\qquad m=r.
\end{equation*}
In particular, \(P,Q\) are balanced, and \(G[C,D]\), \(G[S,T]\) are complete. By \eqref{dec:eq:add vs save},
\(m=r\) only when \(R_1=\varnothing\) and thus \(r=0\) by definition.
So \(m\) is also \(0\) and every crossing pair between \(R\) and \(C\cup D\) is present. Consequently
\begin{equation*}
 G[P,Q]=K_{|P|,|Q|},
\end{equation*}
which proves the equality assertion of the theorem.

We also record two consequences of equality needed in Section~\ref{dec:sec:strict-stability}.
First, \(R_1=\varnothing\) in this case as proved above. Moreover, all vertices of \(R_3\) lie on one side:
otherwise an edge between \(R_3\cap S\) and \(R_3\cap T\), present because \(G[S,T]\) is complete,
would be covered by a \(K_4\) whose exceptional vertices lie on opposite sides and would contribute to \(s\).
\end{proof}

\section{Strict inequality outside \texorpdfstring{\(\mathfrak E_n\)}{En} and proof of Theorem~\ref{dec:thm:main}}\label{dec:sec:strict-stability}

We first characterize graphs in \(\mathfrak E_n\).

\begin{lemma}
  \label{dec:lem:join-normal-forms}
Suppose \( G[P,Q]=K_{|P|,|Q|}\).
\begin{enumerate}[label=\textup{(\roman*)}]
\item If \(|P|=|Q|=k\), then \(G\in\mathfrak E_{2k}\) if and only if
      \(G[P]\) or \(G[Q]\) is edgeless.
\item If \(|P|=k\) and \(|Q|=k+1\), then
      \(G\in\mathfrak E_{2k+1}\) if and only if \(G[P]\) is edgeless,
      \(G[Q]\) is edgeless, or \(G[Q]\) is the spanning star \(K_{1,k}\).
\end{enumerate}
\end{lemma}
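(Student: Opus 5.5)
The plan is to prove both directions directly from the definition of \(\mathfrak E_n\). Membership means there is a balanced bipartition \(V(G)=S\dotcupunion T\) with \(G[S]\) edgeless and \(G[S,T]\) complete. Equivalently, there is an independent set \(S\) of size \(\lfloor n/2\rfloor\) or \(\lceil n/2\rceil\) all of whose vertices are adjacent to every vertex outside \(S\). The ``if'' directions are immediate: take \(S=P\) or \(S=Q\) when that side is edgeless. In case (ii), if \(G[Q]\) is the spanning star with centre \(x\), take \(S=(Q\setminus\{x\})\), which has size \(k\). This \(S\) is independent in \(G\), and it is completely joined to \(P\cup\{x\}\): to \(P\) by the hypothesis \(G[P,Q]=K_{|P|,|Q|}\), and to \(x\) by the star.

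For the ``only if'' direction, suppose \(S\) witnesses \(G\in\mathfrak E_n\). Write \(S_P=S\cap P\), \(S_Q=S\cap Q\), \(T_P=P\setminus S\), \(T_Q=Q\setminus S\). First suppose \(S\) meets both \(P\) and \(Q\). Since every \(P\)--\(Q\) pair is an edge, any \(u\in S_P\) and \(v\in S_Q\) would be adjacent, contradicting independence of \(S\). Hence \(S\subseteq P\) or \(S\subseteq Q\). It is convenient to handle this by the contradiction just given and then split on which side contains \(S\).

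If \(S\subseteq P\), then \(|S|\ge k=|P|\) forces \(S=P\), so \(G[P]\) is edgeless. The same argument applies when \(S\subseteq Q\) with \(|S|=|Q|\), giving \(G[Q]\) edgeless. The only remaining situation is case (ii) with \(S\subseteq Q\) and \(|S|=k=|Q|-1\). Then \(Q=S\cup\{x\}\), \(S\) is independent, and \(x\) is adjacent to all of \(S\) by the definition of \(\mathfrak E_n\). So \(G[Q]\) is exactly the spanning star \(K_{1,k}\) centred at \(x\). This covers every possibility, including \(|S|=k+1\), which forces \(S=Q\).

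The only delicate point is the bookkeeping of sizes in case (ii), since \(|S|\) may be \(k\) or \(k+1\) and \(S\) may lie in either side. The observation that the complete join prevents \(S\) from straddling both sides reduces everything to two short subcases, so I expect no real obstacle.
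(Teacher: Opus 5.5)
Your proof is correct and follows the same route as the paper: because \(G[P,Q]\) is complete, an independent witness set \(S\) cannot meet both sides, and the size bookkeeping then forces \(S=P\), \(S=Q\), or (in case (ii)) \(S=Q\setminus\{x\}\), which makes \(G[Q]\) the spanning star centred at \(x\). The only difference is that you write out the ``if'' directions explicitly, whereas the paper leaves them largely implicit.
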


\begin{proof}
An independent set cannot meet both \(P\) and \(Q\).  This immediately
gives (i).  In (ii), an independent side of order \(k+1\) must be \(Q\).
One of order \(k\) is either \(P\) or \(Q-\{v\}\) for some \(v\in Q\).
The latter is complete to its complement exactly when \(Q-\{v\}\) is
independent and \(v\) is adjacent to all its vertices, which says that
\(G[Q]\) is the spanning star whose vertex of degree \(k\) is \(v\).
\end{proof}

If the construction in Theorem~\ref{dec:thm:stable-decomposition} gives \(\Omega =0\),
then the bipartition \(P\dotcupunion Q\) is balanced and \(G[P,Q]\) is complete.
Suppose  \(G\) is not in \(\mathfrak{E}_n\), then Lemma~\ref{dec:lem:join-normal-forms} says
both sides contain an edge. Thus there is a \(K_4\) with two vertices on each side.
We include this \(K_4\) in the decomposition, delete its edges and decompose the rest of the edges.
This new decomposition would have total cost \(\lfloor \frac{n^2}{4}\rfloor-1\).

\begin{lemma}
\label{dec:lem:strict-equality}
After increasing \(k^\ast\) if necessary, assume the hypotheses of Theorem~\ref{dec:thm:stable-decomposition},
and fix a decomposition and the associated sets \(C,D,R,R_1,R_2,R_3,S,T,P,Q\) and integer \(\Omega\) produced by that theorem.  If
\begin{equation*}
 \Omega=0
 \qquad\text{and}\qquad
 G\notin\mathfrak E_{n},
\end{equation*}
then \(G\) has an exact clique decomposition of cost \(\lfloor \frac{n^2}{4}\rfloor-1\).
\end{lemma}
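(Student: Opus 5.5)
From \(\Omega=0\), Theorem~\ref{dec:thm:stable-decomposition} says that \((P,Q)\) is balanced and \(G[P,Q]=K_{|P|,|Q|}\). It also records that \(R_1=\varnothing\), that \(M[C,D]=\varnothing\), that \(m=r=s=0\), and that \(R_3\) lies on one side. Since \(G\notin\mathfrak E_n\), Lemma~\ref{dec:lem:join-normal-forms} gives edges \(p_1p_2\in E(G[P])\) and \(q_1q_2\in E(G[Q])\). In the odd case, \(G[Q]\) is moreover not the spanning star. Because the bipartition is complete, \(K=\{p_1,p_2,q_1,q_2\}\) is a \(K_4\) with two vertices on each side, so it has saving \(1\).

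The plan is to put \(K\) into the decomposition first and then rerun the construction of Theorem~\ref{dec:thm:stable-decomposition} on \(G-E(K)\) with the same sets \(C,D,R\) and the same side assignment \(S,T\). The four crossing edges of \(K\) are placed into \(U\), and its two internal edges are deleted. Identity \eqref{dec:eq:1.2} then shows the total cost is \(|P||Q|\) minus every loss term minus \(\Sigma\), and \(\Sigma\) now contains the extra \(+1\) from \(K\). So it suffices to show that the rerun incurs no new loss, which gives cost at most \(\lfloor n^2/4\rfloor-1\).

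To keep the rerun close to the original, I would choose \(K\) carefully. If both \(G[C]\) and \(G[D]\) contain edges, I take \(p_1p_2\subseteq C\) and \(q_1q_2\subseteq D\). Then \(K\) adds at most \(2\) to the degree of each vertex in \(M\cup U\), which is absorbed by the slack in \(z\) and in \eqref{dec:eq:3.3}. The triangle-extension step, Lemma~\ref{dec:lem:dense-completion} (which needs only \(\Delta_D\le 3\e q\)), and Lemma~\ref{dec:lem:exceptional-extension} all go through, and no missing pairs appear. Otherwise, some internal edge meets \(R\). In that case I would pick an edge of the form \(vw\) with \(v\in R\) and \(w\in C\cup D\), or an edge inside \(S\) or inside \(T\), and let the \(K_4\) use two vertices on the opposite side that are common neighbours. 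This step relies on the side being complete across.

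The obstacle is that removing edges at an exceptional vertex \(v\in R_2\cup R_3\) changes \(l_v\), \(h_v\), \(I_v\) and \(O_v\), so the side-assignment rule might move \(v\) and break the count. I would avoid this by fixing the final sides \(S,T\) by fiat instead of recomputing them. I would then check directly that steps (b)--(g) still produce only zero-saving triangles and crossing \(2\)-cliques. The key facts for this check are \(R_1=\varnothing\), the completeness of \(G[P,Q]\), and the existence of matchings in Lemma~\ref{dec:lem:hall}, whose margins \(q-4N>2(z+2N)\) tolerate \(O(1)\) extra forbidden pairs. The fully degenerate configuration is when every internal edge lies in a tiny set that \(E(K)\) would exhaust. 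Handling it by direct inspection of the possible internal edges is the step I expect to need the most care.
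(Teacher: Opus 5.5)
\textbf{There is a genuine gap.} Your case where both $G[C]$ and $G[D]$ contain an edge is fine, and there your route is simpler than the paper's. Since $K\subseteq C\cup D$, none of $l_v,h_v,I_v,O_v$ changes, so $R_1=\varnothing$, $R_2$, $R_3$, $S$ and $T$ are reproduced verbatim. The crossing edges of $K$ only raise $a$ and $z$ by $O(1)$, and the previously selected family now costs $|U|-1$, giving exactly $|P||Q|-1$. The problem is the remaining case, which carries the real content of the lemma, and your plan for it fails as stated.

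\emph{The obstruction.} It is not that $E(K)$ might exhaust the internal edges; it is degree tightness. Suppose $|P|=|Q|+1$ and $a\in S$ is universal in $G[P]$. Then $d(a,C)=d(a,D)=q$, so $a\in R_3$, and its internal degree $|P|-1=|Q|$ equals its crossing degree. A $K_4$ with two vertices on each side through $a$ removes one internal and two crossing edges at $a$, leaving one surplus internal edge. Every triangle of steps (b), (d)--(g) containing $a$ covers at most as many internal as crossing edges at $a$. Only a clique with three vertices in $P$ can absorb the surplus, and in the construction that means a step-(c) $K_4$ on an $R_3$--$R_3$ edge. Otherwise some internal edge at $a$ must go into an internal clique, cancelling the saving of $K$.

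\emph{Your selection rule does not avoid this.} Suppose $G[P]$ is two universal vertices $u_1,v_1$ plus an independent set. Then $G[C]$ is edgeless, and your permitted choice $e_P=u_1v_1$ (an edge inside $S$) also deletes the only $R_3$--$R_3$ edge, leaving a deficit at both $u_1$ and $v_1$. Likewise, if $u$ is the only universal vertex and $R_3=\{u\}$, no choice of $e_P$ through $u$ can work.

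\emph{Fixing $S,T$ by fiat does not rescue the rerun.} The balance guarantees of steps (d)--(f) are derived from $l_v,h_v,I_v,O_v$, which $K$ alters. For a universal $u$, removing $uc_1$ and two edges into $D$ gives $d(u,D)<d(u,C)$, so even $L_u$ flips. A direct construction relative to $(P,Q)$ is needed.

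\emph{The missing idea is the paper's choice of the reserved cliques.} Let $S'$ be the set of vertices of $S$ universal in $G[P]$; this is possible only when $|S|=|T|+1$. Choose $e_P\in E(G[P\setminus S'])$ whenever such an edge exists. If every edge of $G[P]$ meets $S'$, then $|S'|\neq 1$ exactly because the larger side $G[P]$ is not the spanning star $K_{1,k}$. This is where the exclusion you recorded from Lemma~\ref{dec:lem:join-normal-forms} is actually used; in your proposal it plays no role. In that case take $e_P=u_1c_1$ with distinct $u_1,v_1\in S'$ and $c_1\in C$. Then add the compensating clique $K''=\{u_1,v_1,c_2,d_2\}$, which has three vertices in $P$, zero saving, and restores the lost unit at $u_1$.

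With $\mathcal E$ the edge set of the reserved cliques, one then has $d_{G[P]-\mathcal E}(a)\le d_{G[P,Q]-\mathcal E}(a)$ for every $a\in S$. After that, the remaining edges at $S\cup T$ are covered by zero-saving triangles and Hall matchings, using $T\subseteq R_2$ for the edges from $T$ into $D$. The dense side is handled again by Lemma~\ref{dec:lem:dense-completion}.
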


\begin{proof}
Write the complete balanced final bipartition supplied by Theorem
\ref{dec:thm:stable-decomposition} as
\begin{equation*}
 P=C\dotcupunion S,\qquad Q=D\dotcupunion T,\qquad |C|=|D|=q.
\end{equation*}
Then \(N=|S|+|T|\) and
\begin{equation}\label{dec:eq:4.1}
 \bigl||S|-|T|\bigr|\le1.
\end{equation}
Let \(z\) be the bound used in the relevant case:
\begin{equation*}
 z=3\lceil k/100\rceil
\quad\text{in Case~\ref{dec:case:both-sparse}},\qquad
 z=2\e k
\quad\text{in Case~\ref{dec:case:dense-side}},
\end{equation*}
Since \(G[P,Q]\) is complete, every
vertex of \(S\) has all \(q\) neighbors in \(D\), and every vertex of
\(T\) has all \(q\) neighbors in \(C\). The equality
case of Theorem~\ref{dec:thm:stable-decomposition} says that
\(R_1=\varnothing\) and that all vertices of \(R_3\)
lie on one side when \(N\ge2\).  For \(N\le1\) the latter assertion is automatic.  Interchanging
the two sides if necessary, assume \(R_3\subseteq S\).

By \eqref{dec:eq:4.1}, every vertex of \(P\) has internal degree at most
\(|P|-1\le|Q|\). Put
\begin{equation*}
 S'=\{v\in S:d_{G[P]}(v)=|Q|\}.
\end{equation*}
A member of \(S'\) can exist only when \(|S|=|T|+1\), and it is then universal in \(G[P]\).

Lemma~\ref{dec:lem:join-normal-forms} says that both \(G[P]\) and \(G[Q]\)
contain an edge.  Choose any \(e_Q\in E(G[Q])\).  If \(G[P\setminus S']\)
contains an edge, choose \(e_P\in E(G[P\setminus S'])\).  Otherwise every
edge of \(G[P]\) meets \(S'\).  In this case
\(|S'|\ne1\): if \(S'=\{u\}\), then the universality of
\(u\) and the absence of edges in \(P-u\) make \(G[P]\) the spanning star \(K_{1,k}\),
contrary to Lemma~\ref{dec:lem:join-normal-forms}.  Choose distinct
\(u_1,v_1\in S'\) and a vertex \(c_1\in C\), put \(e_P=u_1c_1\).

Because \(G[P,Q]\) is complete, the endpoints of \(e_P,e_Q\) span a
\(K_4\), denoted by \(K'\), with two vertices in each side.  Include
\(K'\) in the decomposition and exclude all six of its edges from every subsequent selection.

Repeat the packing of the internal edges of \(C,D\) with these omissions.
A \(K'\) edge inside \(C\) or \(D\) is deleted from the corresponding
internal-edge graph, and every \(C\)--\(D\) edge of \(K'\) is declared forbidden.
Let \(U'\) be the \(C\)--\(D\) edges forbidden by \(K'\), then
\(|U'|\le4\) and \(\Delta_{C\cup D}(U')\le2\).  In the sparse case, apply
Lemma~\ref{dec:lem:triangle-extension} first with
\begin{equation*}
 (a,b)=(\lceil k/100\rceil+1,\lceil k/100\rceil-1),\qquad U=U',
\end{equation*}
and then, with the roles of \(C,D\) reversed, with
\begin{equation*}
 (a,b)=(2\lceil k/100\rceil+2,\lceil k/100\rceil-1),\qquad U=U'\cup U'_C,
\end{equation*}
where \(U'_C\) is the set of crossing edges used in the first application.
The estimates used in \eqref{dec:eq:3.3} change as follows:
\begin{equation*}
 4\lceil k/100\rceil+4+\frac{4+2\cdot10^{-4}k^2}{\lceil k/100\rceil}<q
\end{equation*}
for sufficiently large \(k\), and this common upper bound dominates the right side in
both applications.  Formula \eqref{dec:eq:3.4} gives final maximum degree at most
\(2\lceil k/100\rceil\).
In the dense case, apply Lemma~\ref{dec:lem:triangle-extension} with \(a=\lceil\e k\rceil+1, b=\lceil\e k\rceil-3\) and \(U=U'\); here
\begin{equation*}
 2\lceil\e k\rceil+2+
 \frac{4+2\xi k^2}{\lceil\e k\rceil-2}
 <q
\end{equation*}
for sufficiently large \(k\), and \eqref{dec:eq:3.4} gives maximum degree below \(\e k\).
In the sparse case, all unselected internal edges of \(C\) and \(D\) are covered.
In the dense case, all unselected internal edges of the original sparse set
among \(C,D\) are covered, and the internal edges of the original dense set
are left for Lemma~\ref{dec:lem:dense-completion}.  Every selected triangle
contributes zero to the cost with respect to the bipartition \((P,Q)\), and
the resulting graph \(F\) of forbidden \(C\)--\(D\) pairs has maximum degree
\begin{equation}\label{dec:eq:4.4}
 \Delta_{C\cup D}(F)<z.
\end{equation}

If \(e_P\) was chosen incident to \(S'\), choose \(c_2\in C\setminus\{c_1\}\) and
then choose \(d_2\in D\setminus V(e_Q)\) so that \(c_2d_2\notin F\).
This is possible since fewer than \(z\) \(C-D\) edges at \(c_2\) are forbidden.
Since \(u_1,v_1\) are universal in \(G[P]\), the vertices
\(u_1,v_1,c_2,d_2\) span a \(K_4\); denote it by \(K''\).  It is edge-disjoint
from \(K'\): its three internal edges differ from \(e_P=u_1c_1\), and
\(d_2\notin V(e_Q)\) separates their crossing edges.  Include \(K''\) in
the decomposition, exclude all six of its edges from subsequent choices,
and declare \(c_2d_2\) forbidden. Let \(\mathcal E=E(K')\cup E(K'')\).
If \(e_P\) was chosen from \(G[P\setminus S']\), let \(\mathcal E=E(K')\).

For every \(a\in S\),
\begin{equation}\label{dec:eq:4.5}
 d_{G[P]-\mathcal E}(a)
 \le d_{G[P,Q]-\mathcal E}(a).
\end{equation}
Indeed, before the reserved cliques are removed, the inequality follows
from \(d_{G[P]}(a)\le|Q|\).  If \(a\) is an exceptional endpoint of
\(e_P\), then \(K'\) removes one incident internal edge and two
incident crossing edges, decreasing the available slack by one.  In the
first case \(e_P\) avoids \(S'\), so that slack was at least one.
In the second case only \(u_1\in S'\) is an exceptional endpoint
of \(e_P\), and \(K''\) removes two further internal edges at \(u_1\) while
using one further crossing edge.  This restores the lost unit.  At \(v_1\),
\(K''\) removes two internal edges while using one crossing edge, and all
other vertices are unaffected.

We now cover all remaining edges of \(G[S\cup T]\) and all remaining edges between
\(S\cup T\) and \(C\cup D\) using cliques with zero
contribution to the cost with respect to the bipartition \((P,Q)\).  Including
the edge \(c_2d_2\) when \(K''\) is present, the \(C\)--\(D\) edges used in
this completion form a graph of maximum degree at most \(2N\).

First cover every remaining edge of \(G[S]\) by a triangle whose third
vertex lies in \(D\), choosing the third vertices greedily so that all
incident edges are unused.  Do the same for every remaining edge of
\(G[T]\), with third vertices in \(C\).  For an edge being processed, fewer than
\(2N+2<q\) possible third vertices are excluded by earlier choices and
the reserved cliques \(K'\) and \(K''\)(if present).  Thus the choices exist.

Fix \(a\in S\).  Let
 \begin{equation*}
 \begin{aligned}
  C_a&=\{c\in C:ac\in E(G)\text{ is still unused}\},\\
  D_a&=\{d\in D:ad\in E(G)\text{ is still unused}\},\\
  T_a&=\{t\in T:at\in E(G)\text{ is still unused}\}.
 \end{aligned}
 \end{equation*}
The preceding triangles remove one internal edge and use one crossing edge
at \(a\) for every edge of \(G[S]\) that they cover.  Hence \eqref{dec:eq:4.5} gives
\begin{equation*}
 |C_a|\le |D_a|+|T_a|.
\end{equation*}
Put \(r_a=\max\{0,|C_a|-|D_a|\}\), so that \(r_a\le|T_a|\).
If \(r_a>0\), choose distinct \(t_1,\dots,t_{r_a}\in T_a\) and distinct
\(c_1,\dots,c_{r_a}\in C_a\) such that every \(c_it_i\) is unused,
and use the triangle \(ac_it_i\).

These choices can be made greedily.  Indeed, if \(r_a>0\),
then the first round and the reserved cliques leave
\begin{equation*}
 |D_a|\ge q-(N-1)-3=q-N-2,
\end{equation*}
so \(|C_a|\ge q-N-1\). At any chosen \(t_i\), fewer than \(2N\) edges into \(C\) have
been used by the first round, \(K'\), and the triangles for earlier vertices of \(S\).
Excluding the fewer than \(N\) vertices \(c_j\) already chosen for \(a\) still leaves
a choice, because \(q-N-1>3N\).

After these triangles, at most \(|D_a|\) edges from \(a\) into
\(C\) remain. Match their endpoints in \(C\) into \(D_a\) through
currently unused \(C\)--\(D\) pairs. Before this matching, the forbidden
\(C\)--\(D\) graph has maximum degree at most \(z+N+1\):
\(F\) contributes at most \(z\), the edge \(c_2d_2\) contributes at most
one when \(K''\) is present, and the matching for each earlier vertex of
\(S\) contributes at most one at every vertex in \(C\).  Moreover,
\begin{equation*}
 |D_a|\ge q-N-2>2(z+N+1).
\end{equation*}
Lemma~\ref{dec:lem:hall} therefore supplies the required matching.
For an edge \(cd\) from the matching, use the triangle \(acd\), add \(cd\) to the
forbidden graph, and process all vertices of \(S\) in this way.

It remains to cover the unused edges from \(T\) into \(D\).  Fix \(b\in T\),
and let \(D_b\subseteq D\) be its set of remaining neighbors there and
let \(C_b\subseteq C\) be the set of vertices for which \(bc\) is unused.
The first round consumes at most \(|T|-1\) edges from \(b\) into \(C\), the
triangles connecting \(S,T\) and \(C\) above consume at most one for each
vertex of \(S\), and \(K'\) consumes at most two.  Consequently
\begin{equation*}
 |C_b|\ge q-(|T|-1)-|S|-2=q-N-1.
\end{equation*}
Since we assumed \(R_3\subseteq S\), \(b\) is in \(R_2\). Thus by definition \eqref{dec:eq:def R1R2R3},
\begin{equation*}
 |D_b|\le q-N-1\le|C_b|.
\end{equation*}
The currently forbidden \(C\)--\(D\) graph again has maximum degree at
most \(z+N+1\): in addition to \(F\) and possibly \(c_2d_2\), each completed
matching for a vertex of \(S\cup T\) contributes at most one.  Since
\(|C_b|\ge q-N-1>2(z+N+1)\), Lemma~\ref{dec:lem:hall}, with the roles of
\(C,D\) reversed, matches \(D_b\) into \(C_b\).  Use the corresponding
triangles and process every vertex of \(T\).

The non-matching steps use no \(C\)--\(D\) edge.  Each of the \(N\)
Hall matchings contributes a matching in \(C\times D\).  If \(K''\) is
present, its single \(C\)--\(D\) edge contributes one more; in that case
\(N\ge2\).  Hence the \(C\)--\(D\) graph used by now has maximum
degree at most \(N\) without \(K''\), and at most \(N+1\) with \(K''\).

Finally retain every still-unused edge crossing \((P,Q)\) and incident with
\(S\cup T\) as a \(2\)-clique.  All internal edges incident with
\(S\cup T\) have been covered, and every edge of \(K'\cup K''\) was
omitted from the construction, so the coverage of \(E(G[S\cup T])\) and \(E(G[S\cup T, C\cup D])\) is
exact. Every triangle and \(K''\) has zero contribution to the cost with respect to the bipartition \((P,Q)\).

In the sparse case, the triangle packing on \(C,D\) and above construction cover every
internal edge outside \(K'\cup K''\). In the dense case, let
\(L_{\rm sp},L_{\rm de}\in\{C,D\}\) denote the original sparse and dense
sets, independently of the later interchange of the two sides.  If
the selected edge on the side containing \(L_{\rm de}\)
lies inside \(L_{\rm de}\), call it \(e_{\rm de}\) and put
\(H=G[L_{\rm de}]-e_{\rm de}\); otherwise put \(H=G[L_{\rm de}]\).
The strengthened estimate \eqref{dec:eq:3.48} gives
\begin{equation*}
 e(H)\ge\beta q^2
\end{equation*}
for sufficiently large \(q\).  Let \(F'\) consist of the crossing edges used in the earlier triangle packing,
the \(C\)--\(D\) edges of \(K'\), and the \(C\)--\(D\) edges used in the
exceptional-set coverage, including the \(C\)--\(D\) edge of \(K''\) when it is
present. By \eqref{dec:eq:4.4} and the exceptional-set completion claim,
\begin{equation*}
 \Delta_{C\cup D}(F')
 \le z+(N+1)
 \le2\e k+2\e^2k+1
 <3\e q
\end{equation*}
for sufficiently large \(k\). Apply Lemma~\ref{dec:lem:dense-completion} with parameter \(\e\), taking
\(C=L_{\rm sp}\), \(D=L_{\rm de}\), and \(H\) as defined above. It covers
\(E(H)\) using previously unused edges and crossing cliques that contribute
zero to the cost with respect to the bipartition \((P,Q)\).

We have now covered every internal edge outside \(K'\cup K''\) by a
crossing clique that contributes zero to the cost with respect to the bipartition \((P,Q)\).
The clique \(K''\), when present, has three vertices in \(P\) and one in \(Q\),
so it also contributes zero, while \(K'\) covers exactly the two selected
internal edges and its four edges crossing the final bipartition.  Retain
every unused crossing edge as a \(2\)-clique. The unused-edge conditions
and the explicit omissions show that this is an exact clique decomposition of \(E(G)\);
denote the resulting family by \(\mathcal D\). It has no clique contained
in one side; every crossing clique except \(K'\) contributes zero to \(\Sigma(\mathcal D_{P,Q})\),
while \(K'\) contributes one. Equation~\eqref{dec:eq:1.2} therefore gives
\begin{equation*}
 \cost(\mathcal D)
 =e_G(P,Q)-1
 =|P||Q|-1
 =\lfloor \frac{n^2}{4}\rfloor-1.
\end{equation*}
\end{proof}

\begin{corollary}
\label{dec:cor:strict-stable}
After increasing \(k^\ast\) if necessary, every graph satisfying the
hypotheses of Theorem~\ref{dec:thm:stable-decomposition} has an exact clique
decomposition of cost at most \(\lfloor \frac{n^2}{4}\rfloor\).
Moreover, either \(G\in\mathfrak E_{n}\) or \(G\) has an exact clique decomposition
of cost at most \(\lfloor \frac{n^2}{4}\rfloor-1\).
\end{corollary}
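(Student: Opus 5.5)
The corollary follows by combining Theorem~\ref{dec:thm:stable-decomposition} with Lemma~\ref{dec:lem:strict-equality}, with a short case split on the slack \(\Omega\). First take \(k^\ast\) to be the larger of the thresholds required by Theorem~\ref{dec:thm:stable-decomposition} and Lemma~\ref{dec:lem:strict-equality}. Then let \(G\) satisfy the hypotheses: \(n=2k+\sigma\), a near-balanced bipartition \(A\dotcupunion B\), and \(e(G[A])+|M|\le\xi k^2\).

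Theorem~\ref{dec:thm:stable-decomposition} supplies an exact decomposition \(\mathcal D\) with \(\cost(\mathcal D)=\lfloor n^2/4\rfloor-\Omega\), where \(\Omega\) is a nonnegative integer. It also supplies the final bipartition \(P\dotcupunion Q\) and the auxiliary sets \(C,D,R,R_1,R_2,R_3,S,T\). Since \(\Omega\ge0\), we immediately get \(\cost(\mathcal D)\le\lfloor n^2/4\rfloor\), which is the first assertion.

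For the dichotomy, we split according to \(\Omega\).
\begin{enumerate}
\item If \(\Omega\ge1\), then \(\Omega\) is an integer, so \(\cost(\mathcal D)\le\lfloor n^2/4\rfloor-1\) and the second alternative holds.
\item If \(\Omega=0\) and \(G\in\mathfrak E_n\), the first alternative holds.
\item If \(\Omega=0\) and \(G\notin\mathfrak E_n\), the hypotheses of Lemma~\ref{dec:lem:strict-equality} are met exactly, with the data fixed by the theorem's construction. The lemma then yields an exact decomposition of cost \(\lfloor n^2/4\rfloor-1\).
\end{enumerate}

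There is no real obstacle here. The only points to check are that \(\Omega\) is an integer (this is recorded in the theorem, since each summand of \(\Omega\) is a nonnegative integer) and that Lemma~\ref{dec:lem:strict-equality} is applied to the same decomposition and sets that Theorem~\ref{dec:thm:stable-decomposition} produced. Both hold by construction.
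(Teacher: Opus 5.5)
Your proposal is correct and matches the paper's proof. Like the paper, you apply Theorem~\ref{dec:thm:stable-decomposition}, use integrality of \(\Omega\) when \(\Omega\ge1\), and invoke Lemma~\ref{dec:lem:strict-equality} when \(\Omega=0\) and \(G\notin\mathfrak E_n\).
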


\begin{proof}
Apply the construction in Theorem~\ref{dec:thm:stable-decomposition}. 
If its slack satisfies \(\Omega\ge1\), then \eqref{dec:eq:3.36} is already strict.
If \(\Omega=0\) and \(G\notin\mathfrak E_{n}\), apply Lemma~\ref{dec:lem:strict-equality}.
In the remaining case \(G\in\mathfrak E_{n}\), \eqref{dec:eq:3.36} supplies the required upper bound.
\end{proof}

We are now ready to prove Theorem~\ref{dec:thm:main}. We will use a special case of
Theorem~1.10 of~\cite{BHKNW} which says that \(\CD_2(G,(i-1))\) is not far
from \(\CDstar_2(G,(i-1))\) for large \(n\).
\begin{theorem}
  \label{dec:thm:BHKNW}
For every fixed \(\rho>0\), there is \(n_0=n_0(\rho)\) such that every
graph \(G\) of order \(n\ge n_0\) satisfies
\begin{equation*}
  \CD_2(G,(i-1))\le \CDstar_2(G,(i-1))+\rho n^2.
\end{equation*}
\end{theorem}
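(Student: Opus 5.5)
Since costs are \(c_i=i-1\), I would rewrite both sides of the inequality in terms of \emph{savings}. A clique \(K\) of size \(i\) covers \(\binom i2\) edges at cost \(i-1\), so it saves \(s_i:=\binom i2-(i-1)\) relative to using \(i\) single edges. An (exact, possibly fractional) edge decomposition can be viewed as a (fractional) edge-disjoint packing of cliques of size at least \(3\), with every uncovered edge taken as a \(2\)-clique. Its cost is then \(e(G)\) minus the total saving. Hence
\begin{equation*}
\CD_2(G,(i-1))=e(G)-\nu_s(G),\qquad \CDstar_2(G,(i-1))=e(G)-\nu^*_s(G),
\end{equation*}
where \(\nu_s,\nu_s^*\) denote the maximum integral and fractional weighted clique packings with weight \(s_i\) on \(K_i\). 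The claim becomes \(\nu_s(G)\ge \nu^*_s(G)-\rho n^2\).

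The first step is to truncate clique sizes. Fix \(s_0\) with \(1/s_0<\rho/3\). Take an optimal fractional solution and replace each clique \(K\) of size \(s>s_0\) by the uniform fractional decomposition of \(K\) into all its \(s_0\)-subcliques, each with weight \(1/\binom{s-2}{s_0-2}\). This covers every edge of \(K\) exactly once, at cost
\begin{equation*}
(s_0-1)\binom s2\Big/\binom{s_0}2=\frac{s(s-1)}{s_0}.
\end{equation*}
The cost increase per unit weight of \(K\) is at most \(s^2/s_0\le (3/s_0)\binom s2\), that is, at most \(3/s_0\) per covered edge. Summing over all edges, which are covered with total weight one, the loss is at most \(3e(G)/s_0\le \rho n^2/2\). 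So we obtain a fractional packing using only cliques of sizes \(3,\dots,s_0\), whose saving is at least \(\nu_s^*(G)-\rho n^2/2\).

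The second step converts this bounded-size fractional packing into an integral one. The family \(\{K_3,\dots,K_{s_0}\}\) is finite and fixed, and the weights \(s_i\) are bounded. I would invoke the Haxell--R\"odl theorem in its weighted, multi-graph-family form (Yuster). It says that for a fixed finite family \(\mathcal F\) of graphs with fixed nonnegative weights, the maximum weighted edge-disjoint \(\mathcal F\)-packing differs from the fractional one by \(o(n^2)\). For \(n\ge n_0(\rho)\) the integral packing therefore has saving at least \(\nu_s^*(G)-\rho n^2\). Adding the leftover edges as \(2\)-cliques gives an exact decomposition of cost at most \(\CDstar_2(G,(i-1))+\rho n^2\).

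The main obstacle is the second step, if one has to prove the weighted Haxell--R\"odl statement rather than cite it. I would do so by the standard route. First apply Szemer\'edi's regularity lemma and discard edges inside clusters, in irregular pairs, or in sparse pairs; this costs \(o(n^2)\) saving. Then push the fractional packing down to the weighted reduced graph, where a cluster-level fractional packing of bounded-size cliques is obtained by averaging. Finally realize each cluster-level clique by an almost perfect edge-disjoint packing of copies of \(K_i\) inside the corresponding regular \(i\)-tuples of pairs, with densities split proportionally via random slicing of the pairs and a R\"odl nibble (or Frankl--R\"odl) argument inside each tuple. Keeping the losses from slicing and nibbling at \(o(n^2)\) uniformly over the finitely many clique sizes is the delicate part. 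The truncation in the first step is exactly what makes this finiteness available.
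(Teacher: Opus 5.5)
Your argument is correct. It does not follow the paper, because the paper gives no proof of this statement: it imports it as a special case of Theorem~1.10 of \cite{BHKNW}.

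You instead reduce it to a finite-family Haxell--R\"odl theorem, using two observations specific to \(c_i=i-1\):
\begin{itemize}
\item the identity cost \(=e(G)-\)saving, which also holds for fractional decompositions because every edge receives total weight exactly one;
\item truncation at clique order \(s_0\), which loses at most \(3e(G)/s_0\).
\end{itemize}
The truncation is where \(\rho\) genuinely enters. For \(K_n\) one has \(\CDstar_2(K_n,(i-1))=n-1\), while the program restricted to cliques of order at most \(s_0\) has value \(n(n-1)/s_0\). So an error of order \(n^2/s_0\) is unavoidable at that step, and your \(\rho\)-dependent choice of \(s_0\) is necessary, not an artifact. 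Your route shows that nothing beyond a bounded-family packing theorem is needed for this cost vector. The citation buys brevity and covers general cost vectors and general \(t\).

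One point should be stated more carefully. Yuster's theorem, as usually stated, maximizes the \emph{number} of edge-disjoint copies of members of \(\mathcal F\). Your objective instead weights \(K_i\) by \(s_i=(i-1)(i-2)/2\), which varies with \(i\). What you actually need is the per-type form: for each \(3\le i\le s_0\), the integral packing contains at least as many copies of \(K_i\) as the total fractional weight on \(K_i\)-copies, minus \(o(n^2)\). Summing with the bounded weights \(s_i\) then gives your claim. This form does come out of the regularity, slicing and nibble proof you outline, since each cluster-level \(K_i\) is realized by copies of \(K_i\) itself. So this is a matter of stating and citing the right version, not a gap.
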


\begin{proof}[Proof of Theorem~\ref{dec:thm:main}]
Let \(\xi\) be supplied by Theorem~\ref{dec:thm:stable-decomposition}, choose
\(k^\ast\) so that Corollary~\ref{dec:cor:strict-stable} holds, and put \(\eta=\xi/5\).
Apply Theorem~\ref{dec:thm:fractional-stability} with this fixed \(\eta\),
and let \(\delta>0,n_0\) be the resulting constants.
Apply Theorem~\ref{dec:thm:BHKNW} with \(\rho=\delta/2\).
Choose \(n^\ast\) above all resulting fixed thresholds and large enough that
\(n=2k+\sigma\), \(\sigma\in\{0,1\}\), implies \(k\ge\max\{5,k^\ast\}\).

Let \(G\) be an \(n\)-vertex graph with \(n\ge n^\ast\).
If \(\CDstar_2(G,(i-1))<\lfloor \frac{n^2}{4}\rfloor-\delta n^2\), then Theorem~\ref{dec:thm:BHKNW} gives
\(\CD_2(G,(i-1))\le\CDstar_2(G,(i-1))+\delta n^2/2<\lfloor \frac{n^2}{4}\rfloor\),
and integrality yields \(\CD_2(G,(i-1))\le \lfloor \frac{n^2}{4}\rfloor-1\).

Otherwise Theorem~\ref{dec:thm:fractional-stability} gives a balanced
partition \(S\dotcupunion T\), with the sides interchanged if necessary,
such that \(e(G[S])+m_G(S,T)\le\eta n^2\). Write \(n=2k+\sigma\),
where \(\sigma\in\{0,1\}\). Since \(n^2\le5k^2\), this is at most \(\xi k^2\).
Corollary~\ref{dec:cor:strict-stable} gives a decomposition of cost at most
\(\lfloor \frac{n^2}{4}\rfloor\), and gives one of cost at most \(\lfloor \frac{n^2}{4}\rfloor-1\) whenever
\(G\notin\mathfrak E_n\). Together with the first case, this proves
\begin{equation}\label{dec:eq:4.7}
 G\notin\mathfrak E_n
 \quad\Longrightarrow\quad
 \CD_2(G,(i-1))\le \lfloor \frac{n^2}{4}\rfloor-1.
\end{equation}

It remains to prove equality for \(G\in\mathfrak E_n\). Choose a
balanced partition \(V(G)=S\dotcupunion T\) such that \(S\) is independent and \(G[S,T]\) is complete.
If \(\mathcal D\) is any exact clique decomposition of \(G\), let \(\mathcal D_T\) be its subfamily contained in \(T\).
Every crossing clique contains exactly one vertex of \(S\), so its contribution to \(\Sigma(\mathcal D_{S,T})\) is zero,
and there is no clique contained in \(S\). Equation~\eqref{dec:eq:1.2} therefore gives
\begin{equation*}
 \cost(\mathcal D)
 =e_G(S,T)+\cost(\mathcal D_T)
 \ge |S||T|=\lfloor \frac{n^2}{4}\rfloor.
\end{equation*}
The upper bound already proved gives \(\CD_2(G,(i-1))=\lfloor \frac{n^2}{4}\rfloor\).
Combining this with \eqref{dec:eq:4.7} proves the equality classification.
\end{proof}

\section{Stability for \texorpdfstring{\(t\)}{t}-clique covers}\label{cov:sec:stability}

This section proves a stability result for \(t\)-clique covers. We first record some notation and external lemmas.
For disjoint nonempty vertex sets \(A,B\) in a graph, write \(d(A,B):=\frac{e(A,B)}{|A||B|}\).
The pair \((A,B)\) is \emph{$\varepsilon$-regular} if \(|d(A',B')-d(A,B)|<\varepsilon\)
whenever \(A'\subseteq A\) and \(B'\subseteq B\) satisfy \(|A'|\ge\varepsilon|A|\) and \(|B'|\ge\varepsilon|B|\).
An \emph{$(m,\ell,\varepsilon,d)$-graph} is a graph \(H\) on \(m\) vertices with an equipartition
\(V(H)=W_1\cup\cdots\cup W_\ell\) into independent sets such that, for every \(i<j\),
either \(E_H(W_i,W_j)=\varnothing\) or \((W_i,W_j)\) is \(\varepsilon\)-regular with density at least \(d\).
We write \(\omega(G)\) for the clique number.

The next four lemmas are the unit-cost forms of Lemmas 4.2 and 4.9--4.11 in \cite{BHKNW}.

\begin{lemma}
  \label{cov:lem:cleaning}
For every \(\varepsilon>0\) and positive integer \(m_0\),
there is \(M=M(\varepsilon,m_0)\) such that the following holds.
Let \(G\) be an \(n\)-vertex graph and let \(d>0\).
Then \(G\) has a subgraph \(\widetilde G\) on \(m\ge(1-\varepsilon)n\) vertices
which is an \((m,\ell,\varepsilon,d)\)-graph for some \(m_0<\ell\le M\), and
\begin{equation*}
|E(G)\setminus E(\widetilde G)|
 \le (2\varepsilon+m_0^{-1}+d)n^2.
\end{equation*}
\end{lemma}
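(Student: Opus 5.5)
The natural route is Szemerédi's regularity lemma followed by a cleaning step, in the style of the standard proof of the Erd\H{o}s--Stone theorem. Given \(\varepsilon\) and \(m_0\), I apply the degree form of the regularity lemma to \(G\) with parameter \(\varepsilon'\ll\varepsilon\) and lower bound \(m_0\) on the number of parts. This yields an exceptional set \(W_0\) with \(|W_0|\le\varepsilon' n\) and an equipartition \(W_1\cup\cdots\cup W_\ell\) of the rest, where \(m_0<\ell\le M(\varepsilon',m_0)\). It also yields a subgraph \(G'\subseteq G\) with \(d_{G'}(v)>d_G(v)-(d+\varepsilon')n\) for each \(v\) outside \(W_0\), in which each \(W_i\) is independent and every pair \((W_i,W_j)\) is \(\varepsilon'\)-regular with density either \(0\) or at least \(d\). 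I then set \(\widetilde G:=G'-W_0\) on \(m=n-|W_0|\ge(1-\varepsilon)n\) vertices.

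If only the usual (non-degree) form is available, I will do the cleaning by hand. Starting from an \(\varepsilon'\)-regular equipartition \(V_0\cup V_1\cup\cdots\cup V_\ell\) with \(\ell>m_0\), I delete:
\begin{enumerate}
\item the exceptional set \(V_0\), together with a few extra vertices so that the remaining parts have equal sizes, at most \(\varepsilon' n\) vertices in total;
\item all edges inside the parts \(V_i\), at most \(\ell\binom{n/\ell}{2}\le n^2/(2\ell)<n^2/(2m_0)\) edges;
\item all edges in irregular pairs, at most \(\varepsilon'\ell^2 (n/\ell)^2=\varepsilon' n^2\) edges;
\item all edges in regular pairs of density below \(d\), at most \(d\binom{\ell}{2}(n/\ell)^2\le dn^2/2\) edges.
\end{enumerate}
Deleting the vertices in the first item destroys at most \(\varepsilon' n\cdot n\) edges. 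Summing, the number of removed edges is at most \((2\varepsilon'+m_0^{-1}+d)n^2\), which is within the required \((2\varepsilon+m_0^{-1}+d)n^2\).

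What remains to check is that every surviving pair is still \(\varepsilon\)-regular with density at least \(d\). Each part has lost only a small fraction of its vertices, and the surviving pairs had density at least \(d\) and were \(\varepsilon'\)-regular. Passing to subsets of linear size preserves regularity with a controlled loss: a pair \((A,B)\) that is \(\varepsilon'\)-regular remains \(2\varepsilon'\)-regular on subsets \(A'\subseteq A\), \(B'\subseteq B\) with \(|A'|\ge(1-\varepsilon')|A|\) and \(|B'|\ge(1-\varepsilon')|B|\), and its density drops by at most \(\varepsilon'\).

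The main obstacle is the density threshold, because a density of exactly \(d\) can slip slightly below \(d\) after trimming. I will handle this by using threshold \(d+\varepsilon'\) in the fourth deletion; the extra \(\varepsilon' n^2\) edges this removes are absorbed into the \(2\varepsilon\) term. Taking \(\varepsilon'=\varepsilon/4\) then gives regularity \(\varepsilon\), density at least \(d\), and \(m\ge(1-\varepsilon)n\). Finally, \(M(\varepsilon,m_0)\) is the regularity lemma's bound on the number of parts for parameters \(\varepsilon/4\) and \(m_0+1\). Since these are exactly the cited unit-cost forms from \cite{BHKNW}, I could alternatively just cite them, but the above is the self-contained argument.
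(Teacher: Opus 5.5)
Your proposal is correct and is the standard argument: the paper does not prove this lemma itself but imports it from \cite{BHKNW} (Lemma 4.2 there). Your cleaning of a Szemer\'edi partition (deleting the exceptional set, the edges inside parts, the irregular pairs, and the pairs of density below \(d\)) accounts for the four terms \(\varepsilon+\varepsilon+m_0^{-1}+d\) in the bound, and the degree-form route works equally well.

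One simplification: Szemer\'edi's lemma already gives \(|V_1|=\cdots=|V_\ell|\), so no vertex is trimmed from the surviving parts. The retained pairs therefore stay \(\varepsilon'\)-regular with density at least \(d\), and neither the shifted threshold \(d+\varepsilon'\) nor the slicing estimate is needed.
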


\begin{lemma}
  \label{cov:lem:cleaned-rounding}
Fix \(t\ge2\).  For every integer \(b\ge t\) and every \(d,\rho\in(0,1)\),
there are \(\varepsilon_0>0\) and \(n_0\) such that the following holds.
If \(G\) is an \((n,\ell,\varepsilon,d)\)-graph with
\begin{equation*}
n\ge n_0,\qquad \omega(G)\le b,\qquad \varepsilon<\varepsilon_0,
\end{equation*}
then
\begin{equation*}
\CC_t(G)\le \CCstar_t(G)+\rho n^t.
\end{equation*}
\end{lemma}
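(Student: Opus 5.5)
The plan is to round an optimal fractional $t$-clique cover type by type, using the regular structure of $G$. Each part $W_i$ is independent, so every clique of $G$ has at most one vertex in each part. Its \emph{type} is the set of parts it meets, and that set spans a clique in the reduced graph $R$ on $[\ell]$ (edges are the regular pairs). Since $\omega(G)\le b$, every type has size at most $b$.

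\textbf{Step 1: aggregate by type.} Let $\mathbf x$ be an optimal fractional cover. For each clique $Q$ of $R$ with $|Q|\le b$, set $X_Q:=\sum_{K\text{ of type }Q}x_K$. For a $t$-type $\tau\subseteq Q$, each clique $K$ of type $Q$ contains exactly one $t$-clique of type $\tau$. Summing the covering constraints over the $N_\tau$ $t$-cliques of type $\tau$ therefore gives
\begin{equation*}
\sum_{Q\supseteq\tau}X_Q\ge N_\tau .
\end{equation*}
The cost is $\CCstar_t(G)=\sum_Q X_Q$. Since $\ell\le M$ is bounded, only $O_{M,b}(1)$ types occur. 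We may discard every type whose contribution is below $\rho n^t/(\text{number of types})$, and then cover the affected $t$-cliques by themselves at total cost $O(\rho n^t)$.

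\textbf{Step 2: redistribute uniformly.} We aim to replace $\mathbf x$ by a cover that is uniform within each type, with weight $X_Q/N_Q$ on each clique of type $Q$, where $N_Q$ is the number of cliques of type $Q$. Feasibility should then hold for all but $\rho n^t$ of the $t$-cliques. The tool is the counting lemma for $\varepsilon$-regular pairs of density at least $d$. It shows that all but a $\sqrt\varepsilon$-fraction of $t$-cliques of type $\tau$ extend to roughly the same number $\approx N_Q/N_\tau$ of cliques of type $Q$. For such a typical $t$-clique, the uniform weight it receives is about $\sum_{Q\supseteq\tau}X_Q/N_\tau\ge 1-o(1)$. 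Scaling by $1+\rho$ and covering the atypical $t$-cliques by themselves costs an additional $\rho\,\CCstar_t(G)+O(\sqrt\varepsilon)n^t$.

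\textbf{Step 3: integral rounding (main obstacle).} Consider the auxiliary hypergraph $\mathcal H$ whose vertices are the typical $t$-cliques of $G$ and whose edges are the sets $\binom{K}{t}$ of $t$-subcliques, for cliques $K$ with $|K|\le b$. Then $\mathcal H$ has rank at most $\binom bt$, and the uniform weights form an almost-regular fractional cover of it. A uniform fractional cover is essentially a fractional near-perfect matching with small codegree: two distinct $t$-cliques lie in at most $O(n^{b-t-1})$ common cliques, while the degree is $\Theta(n^{|Q|-t})$. I would apply Kahn's theorem that matchings nearly achieve fractional matchings, or the Pippenger–Spencer/Rödl nibble, separately to the subhypergraph of each type $Q$ with its prescribed share of weight. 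This yields, for each $Q$, a family of cliques of type $Q$ whose $t$-subsets are pairwise distinct. Together these families cover all but $\rho n^t$ typical $t$-cliques, at cost $(1+\rho)\sum_Q X_Q$. The remaining $t$-cliques are covered by themselves.

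The delicate point is Step 3. The different types $Q\supseteq\tau$ compete for the same $t$-cliques of type $\tau$, so the shares must be split consistently. I would first split each $N_\tau$ among the $Q\supseteq\tau$ proportionally to $X_Q$, using a random partition of the $t$-cliques of type $\tau$. Then, inside each $Q$, I would apply the nibble to the subhypergraph where every $\tau\subseteq Q$ has its designated random subset, after checking that degrees stay concentrated. Adding up the errors gives $\CC_t(G)\le\CCstar_t(G)+O(\rho+\sqrt\varepsilon)n^t$, and rescaling $\rho$ proves the lemma.
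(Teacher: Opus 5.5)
The paper does not prove this lemma itself; it is quoted from \cite{BHKNW}. So your sketch has to stand on its own. Steps 1--2 are a reasonable Haxell--R\"odl-type reduction, apart from a point raised at the end. Step~3, however, has a genuine gap: it rounds a \emph{covering} problem with \emph{packings}.

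\textbf{Why packings fail.} A family of type-$Q$ cliques whose $t$-subsets are pairwise distinct uses each $\tau$-clique at most once. So for every $\tau\subseteq Q$ it has at most as many members as there are $\tau$-cliques available to it. That number is at most $N_\tau$, and at most your share $s_{\tau,Q}=N_\tau X_Q/\sum_{Q'\supseteq\tau}X_{Q'}$ under the proportional split. Because the covering constraints are inequalities, a near-optimal fractional cover can have $X_Q$ much larger than $\min_{\tau\subseteq Q}N_\tau$. The family then cannot cover the other subtypes of $Q$, which need about $X_Q$ members.

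\textbf{A concrete example.} Take $t=2$, $b\ge4$, and four parts $W_1,\dots,W_4$ of size $m$. Let $(W_1,W_2)$ be a random bipartite graph of density $p\in(d,1)$, which is $\varepsilon$-regular with high probability, and make all other pairs complete.
\begin{itemize}
\item The $m^2$ pairs of $W_3\times W_4$ give $\CCstar_2(G)\ge m^2$. Weight $(1+o(1))/(pm^2)$ on every $K_4$ shows $\CCstar_2(G)=(1+o(1))m^2$.
\item Summing the constraints over $W_1\times W_3$ and over $W_2\times W_4$ shows that \emph{any} cover of cost $(1+o(1))m^2$, fractional or integral, puts weight $(1-o(1))m^2$ on cliques meeting all four parts.
\item Each such clique contains one of only about $pm^2$ edges of $W_1\times W_2$. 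So near-optimal covers must reuse each of these edges about $1/p$ times.
\end{itemize}
Whatever the split, your type-$\{1,2,3,4\}$ family has at most about $pm^2$ members, and all other types carry only $o(m^2)$ weight. This leaves about $(1-p)m^2$ pairs of $W_3\times W_4$ uncovered. Covering them by themselves costs $\Theta(n^2)$, not $\rho n^2$.

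\textbf{What is missing.} You need a rounding step that starts from a fractional cover and allows overlaps. One repair is to drop the splitting entirely. Instead, apply a covering form of the Frankl--R\"odl/Pippenger nibble to your whole hypergraph $\mathcal H$ with the Step~2 weights $(1+\rho)X_Q/N_Q$. The form you want, in the spirit of Kahn's LP version, says: a fractional cover of a bounded-rank hypergraph with small pair-codegrees rounds to an integral cover of $(1+o(1))$ times its weight. Small codegree is automatic here. Two distinct $t$-cliques span at least $t+1$ vertices, and the counting lemma gives $N_Q\ge c(b,d)\prod_{i\in Q}|W_i|$. Hence codegree weights are $O_{b,d}(\ell^{t+1}/n)$.

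\textbf{A secondary point: dependence on $\ell$.} Your argument needs $n_0$ to grow with $\ell$, which is harmless in the application, where $\ell\le M$. But $\varepsilon_0$ must not depend on $\ell$, because $M=M(\varepsilon,m_0)$ is chosen after $\varepsilon$. So in Step~2 you cannot make a $t$-clique typical for all of the roughly $\ell^{\,b-t}$ types $Q\supseteq\tau$ by a union bound. Instead, let $B_{\tau,Q}$ be the $\tau$-cliques that are atypical for $Q$. Bound $\sum_\tau\sum_{Q\supseteq\tau}(X_Q/N_\tau)|B_{\tau,Q}|\le\sqrt{\varepsilon}\binom bt\CCstar_t(G)$ and apply Markov's inequality.
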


\begin{lemma}
  \label{cov:lem:restore-edges}
Let \(G\) be an \(n\)-vertex graph, let \(t\ge2\), and let \(H\) be a subgraph of \(G\). Then
\begin{equation*}
\CC_t(G)\le \CC_t(H)+|E(G)\setminus E(H)|n^{t-2}.
\end{equation*}
\end{lemma}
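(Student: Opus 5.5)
Start with an optimal \(t\)-clique cover \(\mathcal C_H\) of \(H\), of size \(\CC_t(H)\). The aim is to cover every \(t\)-clique of \(G\) that is not already a \(t\)-clique of \(H\), paying at most \(n^{t-2}\) extra cliques for each edge of \(E(G)\setminus E(H)\).

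Let \(S\) be a \(t\)-clique of \(G\). If all \(\binom t2\) edges of \(S\) lie in \(E(H)\), then \(S\) is a \(t\)-clique of \(H\). It is therefore contained in some member of \(\mathcal C_H\), and that member is still a clique in \(G\) because \(H\subseteq G\). Otherwise \(S\) contains at least one edge \(e\in E(G)\setminus E(H)\). For each such edge \(e=uv\), we add to the cover every \(t\)-clique of \(G\) that contains \(e\), each one taken as a clique of its own. Such a clique is determined by choosing the remaining \(t-2\) vertices from \(V(G)\setminus\{u,v\}\), so there are at most \(\binom{n-2}{t-2}\le n^{t-2}\) of them.

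The combined family consists of cliques of \(G\) and contains every \(t\)-clique of \(G\), so it is a \(t\)-clique cover of \(G\). Its size is at most \(\CC_t(H)+|E(G)\setminus E(H)|\,n^{t-2}\), which is the claimed bound.

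Two minor points need checking. First, when \(H\) has fewer vertices than \(G\), the argument is unchanged: \(t\)-cliques of \(G\) that use vertices outside \(V(H)\) are handled in the same way provided we regard \(H\) as a spanning subgraph. If the deleted vertices must be treated separately, we count the removed edges at those vertices; since every edge of \(G\) not in \(H\) is charged, the bound still holds. Second, in the case \(t=2\) the bound reads \(\CC_2(G)\le \CC_2(H)+|E(G)\setminus E(H)|\), which is covered by the same argument. No step here is a real obstacle.
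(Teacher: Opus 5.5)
Your proof is correct. The paper gives no proof of its own and imports this lemma as the unit-cost case of a lemma in \cite{BHKNW}; your argument is the standard proof of that statement: take an optimal cover of \(H\), then for each edge of \(E(G)\setminus E(H)\) add every \(t\)-clique of \(G\) through it, at most \(\binom{n-2}{t-2}\le n^{t-2}\) per edge.

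Your hedging about non-spanning \(H\) is unnecessary. Since \(t\ge2\), a \(t\)-clique of \(G\) that uses a vertex outside \(V(H)\) already contains an edge of \(E(G)\setminus E(H)\), so your dichotomy covers that case as stated.
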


\begin{lemma}
  \label{cov:lem:bounded-clique}
Let \(G\) be an \(n\)-vertex graph, and let \(b\ge2\) with \(n\ge4b\). Then \(G\) has a spanning subgraph \(H\) such that
\begin{equation*}
\omega(H)\le b
\qquad\text{and}\qquad
|E(G)\setminus E(H)|\le \frac{n^2}{b}.
\end{equation*}
\end{lemma}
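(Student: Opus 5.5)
The plan is to delete every edge inside the parts of an arbitrary equipartition of \(V(G)\) into \(b\) parts; this gives a \(b\)-partite graph directly, so no probabilistic or regularity machinery is needed.

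\textbf{Construction.} Since \(n\ge4b\ge b\), partition \(V(G)=V_1\dotcupunion\cdots\dotcupunion V_b\) into \(b\) nonempty parts with sizes \(s_i\in\{\lfloor n/b\rfloor,\lceil n/b\rceil\}\). Let \(H\) be the spanning subgraph of \(G\) keeping exactly the edges of \(G\) whose endpoints lie in different parts. Then \(H\) is a spanning subgraph of \(G\), and each \(V_i\) is independent in \(H\).

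\textbf{Clique bound.} Any clique of \(H\) contains at most one vertex from each part, since each part is independent in \(H\). Hence \(\omega(H)\le b\).

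\textbf{Edge bound.} The deleted edges all lie inside parts, so their number is at most the number of pairs inside parts:
\begin{equation*}
|E(G)\setminus E(H)|\le\sum_{i=1}^b\binom{s_i}{2}=\frac12\sum_{i=1}^b s_i(s_i-1)\le\frac12\Bigl(\max_i s_i-1\Bigr)\sum_{i=1}^b s_i .
\end{equation*}
We have \(\sum_i s_i=n\) and \(\max_i s_i-1\le\lceil n/b\rceil-1\le n/b\). Therefore the right-hand side is at most \(\frac12\cdot\frac nb\cdot n=\frac{n^2}{2b}\le\frac{n^2}{b}\).

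\textbf{Main obstacle.} The only delicate point is the rounding from \(\lceil n/b\rceil\). Bounding \(s_i(s_i-1)\) by \((\max_i s_i-1)s_i\) absorbs the ceiling, so the factor-\(2\) slack in the statement is not even needed. The hypothesis \(n\ge4b\) is used only to guarantee the parts are nonempty; the argument already works for \(n\ge b\).
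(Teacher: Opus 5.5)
Your proof is correct. Deleting every edge inside the parts of an equipartition into $b$ classes leaves a $b$-partite spanning subgraph, so $\omega(H)\le b$. Your rounding estimate $\sum_i\binom{s_i}{2}\le\frac12(\lceil n/b\rceil-1)\,n\le\frac{n^2}{2b}$ is also valid.

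The paper gives no proof of this lemma to compare against. It imports the statement as the unit-cost form of one of Lemmas 4.2, 4.9--4.11 of \cite{BHKNW}, so there is no in-text argument to diverge from. Your equipartition argument is the natural self-contained proof.

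Your estimate is in fact sharper than the statement needs. It shows that neither the factor $2$ nor the hypothesis $n\ge4b$ is necessary; that hypothesis presumably reflects a cruder rounding bound in the source. The application in Lemma~\ref{cov:lem:nearby-model} uses only the stated bound $n^2/b$, so either version suffices.
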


Combining the preceding lemmas gives a spanning subgraph \(G_0\subseteq G\) with
very few edges deleted and \(\CCstar_t(G_0)\) close to \(\CC_t(G)\).
So asymptotically we only need to consider fractional \(t\)-clique covers.

\begin{lemma}\label{cov:lem:nearby-model}
Fix \(t\ge 2\).  For every \(\eta>0\), there is \(N(t,\eta)\) such that every
graph \(G\) on \(n\ge N(t,\eta)\) vertices has a spanning subgraph \(G_0\) satisfying
\begin{equation*}
|E(G)\setminus E(G_0)|\le \eta n^2
\end{equation*}
and
\begin{equation*}
\CC_t(G)\le \CCstar_t(G_0)+\eta n^t.
\end{equation*}
\end{lemma}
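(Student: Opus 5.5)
We chain the four preceding lemmas, choosing the parameters in reverse order so that each error term is at most a fixed fraction of \(\eta\). Given \(\eta\), we first fix \(b\ge t\) with \(1/b\le \eta/4\). Then we fix \(d\) and \(m_0\) with \(d+m_0^{-1}\le \eta/8\), and we set \(\rho=\eta/4\).

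Lemma~\ref{cov:lem:cleaned-rounding}, applied with \(b,d,\rho\), supplies \(\varepsilon_0\) and \(n_0\). We choose \(\varepsilon<\varepsilon_0\), also with \(2\varepsilon\le\eta/8\), and let \(M=M(\varepsilon,m_0)\) come from Lemma~\ref{cov:lem:cleaning}. Finally we take \(N(t,\eta)\) large enough that \((1-\varepsilon)N\ge n_0\) and \(N\ge 4b\).

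Given \(G\), we first apply Lemma~\ref{cov:lem:bounded-clique} to obtain a spanning \(H\) with \(\omega(H)\le b\) and at most \(n^2/b\le \eta n^2/4\) deleted edges. Next we apply Lemma~\ref{cov:lem:cleaning} to \(H\), which gives a subgraph \(\widetilde G\) on \(m\ge(1-\varepsilon)n\ge n_0\) vertices. This \(\widetilde G\) is an \((m,\ell,\varepsilon,d)\)-graph, has clique number still at most \(b\), and is missing at most \((2\varepsilon+m_0^{-1}+d)n^2\le \eta n^2/4\) further edges.

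We then let \(G_0\) be \(\widetilde G\) together with the discarded vertices added back as isolated vertices, so that \(G_0\) is spanning. The total number of deleted edges is \(|E(G)\setminus E(G_0)|\le \eta n^2/2\le\eta n^2\).

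Isolated vertices lie in no \(t\)-clique, so \(\CC_t(G_0)=\CC_t(\widetilde G)\) and \(\CCstar_t(G_0)=\CCstar_t(\widetilde G)\). Lemma~\ref{cov:lem:cleaned-rounding} then gives
\[
\CC_t(\widetilde G)\le \CCstar_t(\widetilde G)+\rho m^t\le \CCstar_t(G_0)+\tfrac{\eta}{4}n^t.
\]

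To pass back to \(G\), Lemma~\ref{cov:lem:restore-edges} with \(H=G_0\) gives
\[
\CC_t(G)\le \CC_t(G_0)+|E(G)\setminus E(G_0)|\,n^{t-2}.
\]
Combining the two displays, \(\CC_t(G)\le \CCstar_t(G_0)+\tfrac{\eta}{4}n^t+\tfrac{\eta}{2}n^{t}\le \CCstar_t(G_0)+\eta n^t\).

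The main point needing care is the order of quantifiers. The constant \(\varepsilon_0\) of Lemma~\ref{cov:lem:cleaned-rounding} depends on \(b,d,\rho\) but not on \(\ell\); \(M\) depends only on \(\varepsilon\) and \(m_0\); and \(N\) must dominate \(n_0/(1-\varepsilon)\). Fixing the parameters in the order \(b, d, m_0, \rho, \varepsilon, M, N\) avoids any circularity.

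The other point is making sure the cleaning step preserves \(\omega\le b\). It does, because \(\widetilde G\) is a subgraph of \(H\).
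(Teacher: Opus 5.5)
Your proof is correct and follows the paper's own argument essentially step for step: you bound the clique number via Lemma~\ref{cov:lem:bounded-clique}, clean via Lemma~\ref{cov:lem:cleaning}, re-add the discarded vertices as isolated ones, and finish with Lemmas~\ref{cov:lem:cleaned-rounding} and~\ref{cov:lem:restore-edges}, fixing parameters in the same order. The only small detail you omit is the paper's preliminary reduction to \(0<\eta<1\), which guarantees that \(\rho=\eta/4\) lies in \((0,1)\) as Lemma~\ref{cov:lem:cleaned-rounding} requires.
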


\begin{proof}
It suffices to treat \(0<\eta<1\). Choose an integer \(b\ge\max\{t,4/\eta\}\),
then choose \(d>0\) and a positive integer \(m_0\) so that \(b^{-1}+m_0^{-1}+d<\eta/2\).
Let \(\e_0\) and \(n_0\) be supplied by Lemma~\ref{cov:lem:cleaned-rounding} with parameters \(b,d\) and
\(\rho=\eta/4\), and choose \(0<\varepsilon<\varepsilon_0\) so small that
\begin{equation*}
b^{-1}+2\varepsilon+m_0^{-1}+d\le\frac{\eta}{2}.
\end{equation*}
For sufficiently large \(n\), Lemmas~\ref{cov:lem:bounded-clique} and
\ref{cov:lem:cleaning} give a cleaned subgraph \(\widetilde G\) of order \(m\) at
least \((1-\e)n\) and clique number at most \(b\) which is an \((m,\ell,\e,d)\)-graph.
Add its omitted vertices back as isolated vertices to obtain a spanning graph \(G_0\).
The deletion estimates give
\begin{equation*}
 |E(G)\setminus E(G_0)|
 \le (b^{-1}+2\varepsilon+m_0^{-1}+d)n^2
 \le\frac{\eta}{2}n^2.
\end{equation*}
Isolated vertices do not affect either cover parameter,
so Lemma~\ref{cov:lem:restore-edges} and then Lemma~\ref{cov:lem:cleaned-rounding} yield
\begin{equation*}
 \CC_t(G)\le\CC_t(G_0)+\frac{\eta}{2}n^t
 \le\CCstar_t(G_0)+\frac{3\eta}{4}n^t.
\end{equation*}
\end{proof}

Recall that in a graph \(G\), two vertices \(u,v\) are \emph{clones} of each other if \(N_G(u)=N_G(v)\).
This is an equivalence relation; its classes are called \emph{clone classes}, and each
clone class is independent. Let \(A\) and \(B\) be distinct clone classes with no edges between them.
The \emph{symmetrization of \(B\) to \(A\)}, denoted by \(G^{B\to A}\), deletes all
edges incident to \(B\) and then joins every vertex of \(B\) to
the common neighborhood of the vertices of \(A\); all other adjacencies
remain unchanged. Thus \(A\cup B\) becomes one clone class.  Define \(G^{A\to B}\)
analogously by symmetrizing \(A\) to \(B\). The next lemma is the weighted inequality
established in the proof of Lemma 2.1 in \cite{BHKNW}.

\begin{lemma}
  \label{cov:lem:weighted-symmetrization}
With the notation above,
\begin{equation*}
\CCstar_t(G)\le \frac{|A|}{|A|+|B|}\CCstar_t(G^{B\to A})+\frac{|B|}{|A|+|B|}\CCstar_t(G^{A\to B}).
\end{equation*}
\end{lemma}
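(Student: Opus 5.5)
We prove the weighted inequality of Lemma~\ref{cov:lem:weighted-symmetrization} by LP duality for the fractional \(t\)-clique cover. The dual of \(\CCstar_t(G)\) asks for the maximum of \(\sum_{S\in\mathcal K_t(G)}y_S\) over \(y_S\ge0\), subject to \(\sum_{S\subseteq K,\,S\in\mathcal K_t(G)}y_S\le1\) for every clique \(K\) of \(G\). Fix an optimal dual solution \(\mathbf y\) for \(G\). The aim is to construct feasible dual solutions \(\mathbf y^{B\to A}\) for \(G^{B\to A}\) and \(\mathbf y^{A\to B}\) for \(G^{A\to B}\) whose values, weighted by \(|A|/(|A|+|B|)\) and \(|B|/(|A|+|B|)\), add up to at least \(\sum_S y_S\). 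Weak duality then gives the lemma.

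\textbf{The two constructions.} Since \(A\) and \(B\) are independent with no edges between them, every clique of \(G\), \(G^{B\to A}\) and \(G^{A\to B}\) meets \(A\cup B\) in at most one vertex.

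For \(G^{B\to A}\), choose a uniformly random \(a\in A\) and define a map \(\pi_a\) on \(V\): it sends every vertex of \(A\cup B\) to \(a\) and fixes all other vertices. A \(t\)-clique \(S'\) of \(G^{B\to A}\) either avoids \(A\cup B\), or contains exactly one vertex \(v\in A\cup B\), in which case \(\pi_a(S')\) is a \(t\)-clique of \(G\) containing \(a\). Set \(y^{B\to A}_{S'}:=\mathbb E_a\, y_{\pi_a(S')}\), taking the average for cliques that meet \(A\cup B\) and simply \(y_{S'}\) otherwise. Since all vertices of \(A\) are clones, this is just the average of \(y\) over the corresponding cliques through each \(a\in A\).

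Feasibility holds because any clique \(K'\) of \(G^{B\to A}\) maps, for each \(a\), to a clique \(\pi_a(K')\) of \(G\). The \(t\)-subsets of \(K'\) correspond bijectively to \(t\)-subsets of \(\pi_a(K')\), since \(\pi_a\) is injective on \(K'\). So the constraint for \(K'\) is an average of constraints of \(G\), each at most \(1\). The construction for \(G^{A\to B}\) is symmetric, averaging over \(b\in B\).

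\textbf{Value computation.} Split \(\sum_S y_S=Y_0+Y_A+Y_B\) according to whether \(S\) avoids \(A\cup B\), meets \(A\), or meets \(B\). Let \(\bar y_A=Y_A/|A|\), which is the dual mass per vertex of \(A\); since vertices of \(A\) are clones, this equals the mass through any fixed \(a\) after averaging. Define \(\bar y_B=Y_B/|B|\) similarly.

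Then the value of \(\mathbf y^{B\to A}\) is \(Y_0+(|A|+|B|)\bar y_A\), and the value of \(\mathbf y^{A\to B}\) is \(Y_0+(|A|+|B|)\bar y_B\). Their convex combination with weights \(|A|/(|A|+|B|)\) and \(|B|/(|A|+|B|)\) is
\[Y_0+|A|\bar y_A+|B|\bar y_B=\sum_S y_S=\CCstar_t(G).\]
So
\[\CCstar_t(G)\le\frac{|A|}{|A|+|B|}\CCstar_t(G^{B\to A})+\frac{|B|}{|A|+|B|}\CCstar_t(G^{A\to B}).\]

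\textbf{Main obstacle.} The delicate step is the bookkeeping for \(t\)-cliques of \(G^{B\to A}\) through vertices of \(B\). Such a clique has the form \(\{b\}\cup R\) with \(R\subseteq N_G(A)\), and we must check that it is a genuine \(t\)-clique of the new graph that receives weight correctly. We must also check that the constraint for each clique \(K'\) uses \(\pi_a\) consistently, which relies on \(K'\) containing at most one vertex of \(A\cup B\). The same fact guarantees that \(\pi_a(K')\) is a clique of \(G\): \(N_{G^{B\to A}}(b)=N_G(a)\), and edges outside \(A\cup B\) are unchanged.
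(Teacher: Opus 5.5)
Your proof is correct. The averaged copies of an optimal dual solution are feasible for $G^{B\to A}$ and $G^{A\to B}$, since each clique constraint becomes an average of clique constraints of $G$ via $\pi_a$ or $\pi_b$. Their values $Y_0+(|A|+|B|)Y_A/|A|$ and $Y_0+(|A|+|B|)Y_B/|B|$ average with the stated weights to exactly $\CCstar_t(G)$, and weak duality finishes the argument.

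The paper gives no proof of its own here; it quotes the inequality from the proof of Lemma~2.1 in \cite{BHKNW}. Your dual-copying symmetrization is the natural way to establish it, and presumably what that reference does, though I can't confirm the details from this paper alone.
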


Inspired by this inequality, we define a symmetrization random process as follows.
Starting from \(G\), choose \(G^{B\to A}\) with probability \(|A|/(|A|+|B|)\) and \(G^{A\to B}\)
with probability \(|B|/(|A|+|B|)\). Repeat this operation whenever there are two nonadjacent clone classes.
Use any fixed rule to select the next pair. Then the expectation of fractional \(t\)-clique cover number
is nondecreasing, and the number of \(s\)-cliques is a martingale for any \(s\ge2\).

\begin{lemma}\label{cov:lem:random-symmetrization}
The process terminates at a complete multipartite graph \(\mathbf G\).  Moreover,
\begin{equation*}
\CCstar_t(G)\le \mathbb E \CCstar_t(\mathbf G)
\end{equation*}
and, for every \(s\ge 2\),
\begin{equation*}
|\mathcal{K}_s(G)|=\mathbb E |\mathcal K_s(\mathbf G)|.
\end{equation*}
In particular if \(G\) is \(K_s\)-free, then every graph produced by the process is \(K_s\) free.
\end{lemma}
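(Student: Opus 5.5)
We prove the three assertions of Lemma~\ref{cov:lem:random-symmetrization} in turn: termination, the submartingale property of \(\CCstar_t\), and the martingale property of \(|\mathcal K_s|\).

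\emph{Termination.} Each symmetrization step merges two nonadjacent clone classes \(A,B\) into the single clone class \(A\cup B\). The vertices of \(A\cup B\) all end up with the same neighborhood, namely that of the class copied from. Other classes might in principle split, since vertices of \(B\) have changed neighborhoods. However, two vertices outside \(A\cup B\) that were clones still see \(A\cup B\) uniformly: each sees either all of it or none of it, because the copied class was uniform for them before. So no class splits, and classes outside \(A\cup B\) can only merge further. Hence the number of clone classes strictly decreases at each step, and the process stops after at most \(n\) steps. When it stops, every two distinct clone classes are adjacent. Since each class is independent and adjacency between classes is all-or-nothing, the final graph \(\mathbf G\) is complete multipartite with the clone classes as its parts.

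\emph{The fractional bound.} Let \(G_0=G,G_1,\dots\) be the (random) sequence of graphs, where \(G_i=\mathbf G\) once the process has stopped. Lemma~\ref{cov:lem:weighted-symmetrization} says exactly that, conditioned on the history, \(\mathbb E[\CCstar_t(G_{i+1})\mid G_i]\ge \CCstar_t(G_i)\). So \(\CCstar_t(G_i)\) is a submartingale. Since the process stops within \(n\) steps, taking expectations iteratively gives \(\CCstar_t(G)\le\mathbb E\,\CCstar_t(\mathbf G)\); no optional-stopping subtleties arise.

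\emph{The clique count.} It suffices to prove a one-step identity:
\begin{equation*}
|\mathcal K_s(G)|=\tfrac{|A|}{|A|+|B|}|\mathcal K_s(G^{B\to A})|+\tfrac{|B|}{|A|+|B|}|\mathcal K_s(G^{A\to B})|.
\end{equation*}
Because \(A\cup B\) is independent in all three graphs, every \(s\)-clique meets \(A\cup B\) in at most one vertex. Cliques avoiding \(A\cup B\) are common to all three graphs. Let \(k_A\) be the number of \((s-1)\)-cliques in \(N(a)\) for \(a\in A\), and define \(k_B\) similarly. This is well defined because \(N(a)\) is the same for every \(a\in A\), and these neighborhoods lie outside \(A\cup B\), where the graph is unchanged. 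The cliques meeting \(A\cup B\) then number \(|A|k_A+|B|k_B\) in \(G\), \((|A|+|B|)k_A\) in \(G^{B\to A}\), and \((|A|+|B|)k_B\) in \(G^{A\to B}\), and the weighted average matches. Iterating this identity as a martingale over the bounded number of steps gives \(|\mathcal K_s(G)|=\mathbb E|\mathcal K_s(\mathbf G)|\). Finally, if \(G\) is \(K_s\)-free, each graph in the process has a nonnegative number of \(K_s\)'s whose expectation is \(0\). At each step both outcomes occur with positive probability, so every reachable graph is \(K_s\)-free.

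The only step needing real care is the termination argument: showing that the other clone classes do not split, so that the number of classes is a valid potential.
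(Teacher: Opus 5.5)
Your proof is correct and follows the paper's argument: the number of clone classes strictly decreases because the two selected classes merge and no class splits, the fractional bound comes from iterating the weighted symmetrization inequality, and the clique count rests on the same one-step identity comparing $|A|c_A+|B|c_B$ with $(|A|+|B|)c_A$ and $(|A|+|B|)c_B$. The only difference is minor: you obtain $K_s$-freeness from the martingale identity, using nonnegativity and the fact that both branches have positive probability, whereas the paper argues directly that any new $K_s$ through the cloned class gives a $K_s$ in the previous graph after swapping in a vertex of the class whose neighborhood was copied.
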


\begin{proof}
At each step, the two selected clone classes merge, and no clone class splits.
Thus the number of clone classes decreases.  At termination, every two distinct clone classes
are completely adjacent, so the terminal graph is complete multipartite.

The first expectation inequality follows by iterating Lemma~\ref{cov:lem:weighted-symmetrization}.
Since all vertices in a clone class have the same neighborhood, let \(N_G(A)\) and \(N_G(B)\) denote
the common neighborhoods of the vertices in \(A\) and \(B\) respectively.  For the clique count, let
\begin{equation*}
c_A:=|\mathcal K_{s-1}(G[N_G(A)])|
\quad\text{and}\quad
c_B:=|\mathcal K_{s-1}(G[N_G(B)])|.
\end{equation*}
Since \(A\cup B\) is independent, the number of copies of \(K_s\) meeting \(A\cup B\) is \(|A|c_A+|B|c_B\).
After the two possible symmetrizations, this number is respectively \((|A|+|B|)c_A\) and \((|A|+|B|)c_B\).
Its conditional expectation is therefore unchanged.  Copies avoiding \(A\cup B\) are unchanged.
Iterating the conditional identity proves the martingale formula.

Finally, suppose that a symmetrized graph contains a new \(K_s\) using a vertex
in the class that was cloned. Replacing that vertex by a vertex of the clone class
whose neighborhood was copied gives a \(K_s\) in the preceding graph.
Hence symmetrization preserves \(K_s\)-freeness.
\end{proof}

Since the symmetrization random process always terminates at a complete multipartite graph,
we'd like to calculate the fractional \(t\)-clique cover number in such case, which is straightforward.

\begin{lemma}\label{cov:lem:multipartite-value}
Let \(G\) be complete multipartite with nonzero part sizes
\begin{equation*}
a_1\ge a_2\ge\cdots\ge a_k,
\end{equation*}
and append zero parts when \(k<t\).  Then
\begin{equation*}
\CCstar_t(G)=\prod_{i=1}^t a_i.
\end{equation*}
\end{lemma}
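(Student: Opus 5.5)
When \(k<t\) the graph contains no \(t\)-clique, because a clique has at most one vertex in each part. The empty family is then an optimal fractional cover, so \(\CCstar_t(G)=0\), which equals the product since a zero part appears among \(a_1,\dots,a_t\). From now on assume \(k\ge t\) and put \(\Pi:=\prod_{i=1}^t a_i\). I will prove \(\CCstar_t(G)\le \Pi\) with an explicit fractional cover and \(\CCstar_t(G)\ge\Pi\) with an explicit dual solution.

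\emph{Lower bound.} The LP dual of the fractional \(t\)-clique cover problem with unit costs asks for nonnegative weights \(z_S\) on \(S\in\mathcal K_t(G)\) maximizing \(\sum_S z_S\), subject to \(\sum_{S\subseteq K}z_S\le 1\) for every clique \(K\). Put \(z_S=1\) on each \(t\)-clique having exactly one vertex in each of the parts \(V_1,\dots,V_t\), and \(z_S=0\) on all other \(t\)-cliques. A clique \(K\) meets each part in at most one vertex. Hence it contains at most one \(t\)-clique that is transversal to \(V_1,\dots,V_t\), namely \(K\cap(V_1\cup\cdots\cup V_t)\) when this set has size \(t\). So the dual is feasible, and weak duality gives \(\CCstar_t(G)\ge \Pi\).

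\emph{Upper bound.} The maximal cliques of \(G\) are exactly the full transversals, which pick one vertex from every part; there are \(\prod_{j=1}^k a_j\) of them. Give each maximal clique the same weight
\begin{equation*}
x:=\frac{\Pi}{\prod_{j=1}^k a_j},
\end{equation*}
and weight \(0\) to all other cliques. Now fix a \(t\)-clique \(S\), and let \(I\) be the set of \(t\) parts it meets. The maximal cliques containing \(S\) are obtained by choosing one vertex in each part outside \(I\), so there are \(\prod_{j\notin I}a_j\) of them. Their total weight is therefore
\begin{equation*}
\prod_{j\notin I}a_j\cdot\frac{\Pi}{\prod_{j=1}^k a_j}=\frac{\prod_{i=1}^t a_i}{\prod_{i\in I}a_i}\ge 1.
\end{equation*}
The last inequality holds because \(a_1,\dots,a_t\) are the \(t\) largest part sizes, so their product is at least the product of the sizes of any \(t\) parts. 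Thus this is a feasible fractional cover. Its cost is the number of maximal cliques times \(x\), which is exactly \(\Pi\).

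Combining the two bounds gives \(\CCstar_t(G)=\prod_{i=1}^t a_i\). The only step needing any care is choosing the uniform weighting on the maximal cliques, which makes every \(t\)-clique meeting the largest \(t\) parts tight; the remaining verifications are routine.
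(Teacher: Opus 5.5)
Your proof is correct and follows essentially the same approach as the paper. The lower bound comes from the \(\prod_{i=1}^t a_i\) transversal copies of \(K_t\) on the \(t\) largest parts; you phrase it as a feasible dual solution, while the paper sums the corresponding primal constraints. The upper bound is the same uniform weight \(1/\prod_{j>t}a_j\) on every maximal clique.
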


\begin{proof}
If \(k<t\), both sides are zero. Assume \(k\ge t\).
There are \(\prod_{i=1}^t a_i\) copies of \(K_t\) using the first \(t\) parts.
A clique of \(G\) contains at most one of these copies.
Summing their fractional-cover constraints gives
\begin{equation*}
\CCstar_t(G)\ge \prod_{i=1}^t a_i.
\end{equation*}

Give every maximal clique weight \(\left(\prod_{i=t+1}^k a_i\right)^{-1}\).
The total weight is \(\prod_{i=1}^t a_i\).  A copy of \(K_t\) using the parts indexed by \(I\) receives total weight
\begin{equation*}
\frac{\prod_{j\notin I}a_j}{\prod_{j>t}a_j}
 =\frac{\prod_{i=1}^t a_i}{\prod_{i\in I}a_i}
 \ge 1,
\end{equation*}
since the first \(t\) part sizes have the largest product. This proves the reverse inequality.
\end{proof}

Lemma~\ref{cov:lem:random-symmetrization} and Lemma~\ref{cov:lem:multipartite-value} imply
\(\CCstar_t(G)\le \CC_t(T_{n,t})\) for every \(n\)-vertex graph \(G\): in every terminal graph,
the sum of the \(t\) largest part sizes is at most \(n\), so their product is at most \(\CC_t(T_{n,t})\).
Moreover, Lemma~\ref{cov:lem:random-symmetrization} allows us to keep track of \(\CCstar_t(\mathbf G)\)
and \(\mathcal K_s(\mathbf G)\) at the same time. The next lemma says the number of \(t+1\)-cliques in \(G\)
is controlled by its fractional \(t\)-clique cover number.

\begin{lemma}\label{cov:lem:few-larger-cliques}
For every fixed \(t\), there is a constant \(C=C(t)\) such that every \(n\)-vertex graph \(G\) satisfies
\begin{equation*}
|\mathcal K_{t+1}(G)|
 \le \frac{t^t}{t!}\,n\bigl(\CC_t(T_{n,t})-\CCstar_t(G)\bigr)+Cn^t.
\end{equation*}
\end{lemma}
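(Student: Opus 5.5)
The plan is to reduce to complete multipartite graphs using the random symmetrization process, and then to verify the inequality there by an elementary counting argument.

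\textbf{Step 1: reduction.} Run the symmetrization random process of Lemma~\ref{cov:lem:random-symmetrization} from \(G\). It terminates at a random complete multipartite graph \(\mathbf G\) on the same \(n\) vertices. The lemma gives
\[
|\mathcal K_{t+1}(G)|=\mathbb E|\mathcal K_{t+1}(\mathbf G)|
\qquad\text{and}\qquad
\CCstar_t(G)\le\mathbb E\,\CCstar_t(\mathbf G),
\]
so that \(\CC_t(T_{n,t})-\CCstar_t(G)\ge \mathbb E\bigl(\CC_t(T_{n,t})-\CCstar_t(\mathbf G)\bigr)\). The right-hand side of the claimed inequality is affine in \(\CCstar_t\) with a positive coefficient on \(\CC_t(T_{n,t})-\CCstar_t\). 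Hence, by linearity of expectation, it suffices to prove the inequality for every complete multipartite graph on \(n\) vertices, with the same constant \(C(t)\).

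\textbf{Step 2: setup for a complete multipartite graph.} Let the part sizes be \(a_1\ge\cdots\ge a_k\), padded with zeros if \(k<t\). By Lemma~\ref{cov:lem:multipartite-value}, \(\CCstar_t=\prod_{i\le t}a_i\). Put \(s:=\sum_{i>t}a_i\), the number of vertices outside the \(t\) largest parts.

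\textbf{Step 3: upper bound on \(K_{t+1}\)'s.} Every \(K_{t+1}\) uses \(t+1\) distinct parts, so it contains a vertex \(v\) lying outside the top \(t\) parts. Charge the clique to such a vertex. The number of \(K_{t+1}\)'s charged to a fixed \(v\) is at most the number of \(K_t\)'s in the whole graph, which equals \(e_t(a_1,\dots,a_k)\). By Maclaurin's inequality (or simply \(e_t\le\binom{n}{t}\)), this is at most \(n^t/t!\). Therefore
\[
|\mathcal K_{t+1}|\le s\,\frac{n^t}{t!}.
\]

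\textbf{Step 4: lower bound on the gap.} By AM--GM, \(\prod_{i\le t}a_i\le\bigl((n-s)/t\bigr)^t\). Also \(\CC_t(T_{n,t})=\prod_{j}\lfloor (n+j)/t\rfloor\ge (n/t)^t-C'n^{t-1}\). Hence
\[
\frac{t^t}{t!}\,n\Bigl(\CC_t(T_{n,t})-\prod_{i\le t}a_i\Bigr)\ \ge\ \frac{n\bigl(n^t-(n-s)^t\bigr)}{t!}-C''n^t .
\]
Since \((n-s)^t\le n^{t-1}(n-s)\), we have \(n\bigl(n^t-(n-s)^t\bigr)\ge n\cdot n^{t-1}s=s\,n^t\). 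Comparing with Step 3 gives the lemma with \(C=C''\), and no case split on \(s\) is needed.

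\textbf{Main obstacle.} I expect the only delicate point to be the reduction in Step 1: checking that the inequality directions align when passing to expectations. They do, because the clique count is an exact martingale while \(\CCstar_t\) can only increase in expectation, and the latter only enlarges the gap \(\CC_t(T_{n,t})-\CCstar_t\) on the correct side. The remaining work is the routine rounding estimate \(\CC_t(T_{n,t})\ge (n/t)^t-O(n^{t-1})\), whose error becomes \(O(n^t)\) after multiplying by \(n\) and is absorbed into \(Cn^t\).
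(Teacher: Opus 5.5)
Your proposal is correct and follows the paper's own route. You reduce via the random symmetrization process to a complete multipartite terminal graph, bound $|\mathcal K_{t+1}|$ by $s\,n^t/t!$ (with $s$ the number of vertices outside the $t$ largest parts), and compare with the deficit using AM--GM together with $(n-s)^t\le n^{t-1}(n-s)$, which is exactly the paper's inequality $1-(1-r)^t\ge r$; the only cosmetic difference is that you prove the inequality pointwise on each terminal graph before taking expectations, whereas the paper takes expectations of the two bounds separately.
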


\begin{proof}
Apply Lemma~\ref{cov:lem:random-symmetrization}.  Let
\begin{equation*}
x_1\ge x_2\ge\cdots
\end{equation*}
be the normalized part sizes of the terminal graph \(\mathbf G\), and set \(r:=\sum_{i>t}x_i\).
By Lemma~\ref{cov:lem:multipartite-value} and the arithmetic--geometric mean inequality,
\begin{equation*}
\frac{\CCstar_t(\mathbf G)}{n^t}
 =\prod_{i=1}^t x_i
 \le \left(\frac{1-r}{t}\right)^t.
\end{equation*}
Since \(1-(1-r)^t\ge r\) for \(0\le r\le 1\),
\begin{equation*}
r\le
\frac{t^t}{n^t}\left(\frac{n^t}{t^t}-\CCstar_t(\mathbf G)\right).
\end{equation*}
Also \(\CC_t(T_{n,t})=n^t/t^t+O_t(n^{t-1})\), and therefore
\begin{equation}\label{cov:eq:tail-deficit}
r\le \frac{t^t}{n^t}\bigl(\CC_t(T_{n,t})-\CCstar_t(\mathbf G)\bigr)+O_t(n^{-1}).
\end{equation}

Let \(W\) be the union of the parts after the largest \(t\) parts, so
\(|W|=rn\).  Every copy of \(K_{t+1}\) meets \(W\), and therefore
\begin{equation*}
|\mathcal K_{t+1}(\mathbf G)|
 \le |W|\binom nt
 \le \frac{r}{t!}n^{t+1}.
\end{equation*}
Take expectations in this inequality and in \eqref{cov:eq:tail-deficit}. Lemma~\ref{cov:lem:random-symmetrization} gives
\begin{align*}
|\mathcal K_{t+1}(G)|
 &=\mathbb E |\mathcal K_{t+1}(\mathbf G)|\\
 &\le \frac{t^t}{t!}\,n\bigl(\CC_t(T_{n,t})-\mathbb E \CCstar_t(\mathbf G)\bigr)+O_t(n^t)\\
 &\le \frac{t^t}{t!}\,n\bigl(\CC_t(T_{n,t})-\CCstar_t(G)\bigr)+O_t(n^t).
\end{align*}
This proves the estimate.
\end{proof}

When \(\CCstar_t(G)\) is close to \(\CC_t(T_{n,t})\), Lemma~\ref{cov:lem:few-larger-cliques} and
graph removal lemma~\cite{Removal} say we can delete very few edges to make \(G\) \(K_{t+1}\)-free.
Together with a result by Ma and Qiu~\cite{MaQiu}, this proves the edit distance between \(G\) and
\(T_{n,t}\) is small. Here the edit distance between \(G\) and \(H\), denoted by \(\ed(G,H)\),
is the minimum of \(|E(G)\triangle E(H')|\) over all copies \(H'\) of \(H\) on \(V(G)\).
For completeness we record the external results we use below.

\begin{theorem}[Graph removal lemma \cite{Removal}]\label{cov:thm:graph-removal}
For every fixed graph $F$ and every $\varepsilon>0$, there is $\delta>0$
such that the following holds.  If an $n$-vertex graph $G$
contains at most $\delta n^{|V(F)|}$ copies of $F$, then one can delete at most
$\varepsilon n^2$ edges from $G$ to obtain an $F$-free graph.
\end{theorem}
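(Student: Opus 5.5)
The plan is the standard proof via Szemerédi's regularity lemma together with the graph counting lemma. Set \(f=|V(F)|\). Given \(\varepsilon>0\), take \(\varepsilon'\) small in terms of \(\varepsilon\) and \(F\), and a density threshold \(d=\varepsilon/4\). Apply the regularity lemma to \(G\) with parameter \(\varepsilon'\) and a lower bound \(m_0\ge 4/\varepsilon\) on the number of parts. This gives an equipartition \(V(G)=V_0\dotcupunion V_1\dotcupunion\cdots\dotcupunion V_\ell\) with \(m_0\le \ell\le M(\varepsilon',m_0)\) and \(|V_0|\le \varepsilon' n\), in which all but \(\varepsilon'\ell^2\) pairs are \(\varepsilon'\)-regular. (Alternatively, Lemma~\ref{cov:lem:cleaning} already packages this cleaning step.)

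Next, delete four kinds of edges:
\begin{itemize}
\item edges meeting \(V_0\), at most \(\varepsilon' n^2\);
\item edges inside some \(V_i\), at most \(n^2/\ell\le n^2/m_0\);
\item edges in irregular pairs, at most \(\varepsilon'\ell^2(n/\ell)^2=\varepsilon' n^2\);
\item edges in pairs of density below \(d\), at most \(d n^2/2\).
\end{itemize}
With these choices the total is at most \(\varepsilon n^2\). Call the resulting graph \(G'\). If \(G'\) is \(F\)-free we are done. So suppose \(G'\) contains a copy of \(F\), given by an embedding \(\phi:V(F)\to V(G')\).

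Every vertex of this copy lies in some \(V_{i(x)}\) with \(i(x)\ge1\). For every edge \(xy\) of \(F\), the cluster indices \(i(x)\) and \(i(y)\) are distinct, and \((V_{i(x)},V_{i(y)})\) is an \(\varepsilon'\)-regular pair of density at least \(d\). Thus \(x\mapsto i(x)\) is a homomorphism from \(F\) to the reduced graph; it may be non-injective on non-adjacent vertices. The counting lemma, applied to the family of clusters \((V_{i(x)})_{x\in V(F)}\) with repetitions allowed, gives that the number of maps \(\psi:V(F)\to V(G)\) satisfying \(\psi(x)\in V_{i(x)}\) and preserving all edges of \(F\) is at least
\[
\tfrac12 d^{e(F)}\prod_{x}|V_{i(x)}| \;\ge\; \tfrac12 d^{e(F)}\bigl(\tfrac{(1-\varepsilon')n}{M}\bigr)^{f},
\]
provided \(\varepsilon'\le\varepsilon'_0(d,F)\). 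Non-injective maps number \(O(n^{f-1})\). Each copy of \(F\) corresponds to at most \(|\mathrm{Aut}(F)|\le f!\) maps. Hence \(G\supseteq G'\) contains at least \(c\,n^{f}-O(n^{f-1})\) copies of \(F\), where \(c=d^{e(F)}/(2f!M^f)\) depends only on \(\varepsilon\) and \(F\).

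Finally, choose \(\delta<c/2\). For large \(n\) this contradicts the hypothesis that \(G\) has at most \(\delta n^f\) copies of \(F\). Small \(n\) is handled by shrinking \(\delta\) below \(n^{-f}\) for all \(n\) under the threshold: then the hypothesis forces \(G\) to be \(F\)-free, and nothing needs to be deleted.

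The main obstacle is the counting lemma in the form with repeated clusters. I would prove it by induction on \(f\), embedding the vertices of \(F\) one at a time. At each step, a vertex \(v\in V_{i(x)}\) is typical if, for every later neighbor \(y\) of \(x\), the current candidate set in \(V_{i(y)}\) shrinks by a factor of at least \(d-\varepsilon'\). Regularity guarantees that all but \(f\varepsilon'|V_{i(x)}|\) vertices are typical, as long as every candidate set keeps size at least \(\varepsilon'|V_j|\). Since \((d-\varepsilon')^{f}\ge\varepsilon'\) for \(\varepsilon'\) small, this size condition is maintained throughout. Repetition of clusters causes no difficulty, because regularity is only ever used on pairs \(V_{i(x)},V_{i(y)}\) with \(xy\in E(F)\), and these are distinct clusters.
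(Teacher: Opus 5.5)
The paper gives no proof of this statement; it quotes the result from \cite{Removal} as a black box. Your argument is the classical proof of that result: the regularity lemma, cleaning, and then the counting lemma with clusters allowed to repeat on non-adjacent vertices of \(F\). It is correct.

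There is one small slip. If \(F\) has isolated vertices, their images in the copy of \(F\) inside \(G'\) may lie in \(V_0\), so \(i(x)\) is undefined for them. The fix is to assign each isolated vertex of \(F\) to an arbitrary cluster, say \(V_1\), before counting; such a vertex imposes no constraint. Separately, the factor \((1-\varepsilon')^f\) you dropped when defining \(c\) only changes the constant and does not affect the argument.
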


\begin{theorem}[Ma and Qiu~\cite{MaQiu}]\label{cov:thm:maqiu}
  For every integer \(r\ge2\) and every \(\e>0\), there are \(\delta>0\) and \(N_0\) such that,
  if \(n\ge N_0\) and \(G\) is an \(n\)-vertex \(K_{r+1}\)-free graph with at most \(\delta n^r\)
  fewer copies of \(K_r\) than \(T_{n,r}\), then \(\ed(G,T_{n,r})\le \e n^2\).
\end{theorem}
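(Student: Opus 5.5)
We take the standard route: first turn near-maximality of the \(K_r\)-count into near-maximality of the edge count, then apply the classical Erd\H{o}s--Simonovits stability theorem for \(K_{r+1}\)-free graphs. Throughout, \(G\) is \(K_{r+1}\)-free on \(n\) vertices, and we write \(k_r(G)=|\mathcal K_r(G)|\) and \(e=e(G)\).

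\emph{Step 1 (clique-count versus edge-count inequality).} We would prove that every \(K_{r+1}\)-free graph satisfies
\begin{equation*}
 k_r(G)\le \left(\frac{e}{\binom r2}\right)^{r/2}.
\end{equation*}
This is the Khadzhiivanov / Sauer / Erd\H{o}s-type bound. The approach is a Lagrangian argument in the spirit of the proof of Lemma~\ref{dec:lem:weighted-mantel}. Take vertex weights \(\mathbf x\) in the simplex and consider the polynomial \(\sum_{K\in\mathcal K_r(G)}\prod_{v\in K}x_v\). A maximizer with minimal support is supported on a clique: if two support vertices are nonadjacent, the polynomial is linear along the segment moving mass between them, so one of them can be removed from the support. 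Since \(G\) is \(K_{r+1}\)-free, this clique has at most \(r\) vertices.

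The inequality then has to be derived from this. One option is the standard induction on \(r\) via neighbourhoods, using the identity \(r\,k_r(G)=\sum_v k_{r-1}(G[N(v)])\) together with the fact that each \(G[N(v)]\) is \(K_r\)-free. Another is to quote it as the case \(s=r\) of the Khadzhiivanov--Sós--Straus inequality \(k_s\le \binom rs\bigl(e/\binom r2\bigr)^{s/2}\). Equality in the bound is attained by \(T_{n,r}\) up to lower-order terms, and \(k_r(T_{n,r})=(n/r)^r+O(n^{r-1})\).

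\emph{Step 2 (many edges).} Suppose \(k_r(G)\ge k_r(T_{n,r})-\delta n^r\). Step~1 then gives
\begin{equation*}
 e\ge \binom r2\left((n/r)^r-\delta' n^r\right)^{2/r}\ge \Bigl(1-\frac1r\Bigr)\frac{n^2}{2}-\gamma(\delta) n^2,
\end{equation*}
where \(\gamma(\delta)\to0\) as \(\delta\to0\). So \(G\) is a \(K_{r+1}\)-free graph with \(e(G)\ge t_r(n)-\gamma n^2\).

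\emph{Step 3 (stability).} Apply the Erd\H{o}s--Simonovits stability theorem, or F\"uredi's quantitative version. It says that a \(K_{r+1}\)-free graph with at least \(t_r(n)-\gamma n^2\) edges admits an \(r\)-partition in which at most \(\gamma n^2\) edges lie inside parts. Counting edges then shows that at most \(2\gamma n^2\) crossing pairs are missing. The edge count also forces every part to have size \(n/r\pm O(\sqrt\gamma)n\). Rebalancing the parts to exact Turán sizes changes at most \(O(\sqrt\gamma)n^2\) pairs, so \(\ed(G,T_{n,r})\le \e n^2\) once \(\delta\) is small enough and \(N_0\) is large.

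The main obstacle is Step~1, the sharp clique-versus-edge inequality: this is where \(K_{r+1}\)-freeness is genuinely used. If the neighbourhood induction becomes messy, we would fall back on the Lagrangian/symmetrization argument. The idea there is that Zykov symmetrization of nonadjacent vertices preserves \(K_{r+1}\)-freeness, reduces to a complete \(r\)-partite graph without decreasing the ratio \(k_r/e^{r/2}\), and leaves only an AM--GM computation on the part sizes.
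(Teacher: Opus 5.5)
The paper gives no proof of this statement. It quotes the result from Ma and Qiu and uses it as a black box in the proof of Theorem~\ref{cov:prop:fractional-stability}. Your argument is a correct, independent derivation of exactly the case needed there, built from two classical inputs.

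\begin{enumerate}
\item The Khadzhiivanov--S\'os--Straus inequality at $s=r$, namely $k_r\le (e/\binom r2)^{r/2}$ for $K_{r+1}$-free graphs, is tight at $T_{n,r}$. It therefore converts the clique deficit $\delta n^r$ into an edge deficit $\gamma(\delta)n^2$ with $\gamma=O_r(\delta)+O(1/n)$.
\item F\"uredi's form of Erd\H{o}s--Simonovits stability then gives an $r$-partition with at most $\gamma n^2$ internal edges.
\end{enumerate}

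Your bookkeeping after that is right. Crossing pairs number at most $t_r(n)$, so at most $2\gamma n^2$ of them are missing. The part sizes satisfy $\sum_i(|V_i|-n/r)^2\le 4\gamma n^2+r/4$, and rebalancing costs $O(r\sqrt\gamma)n^2$. Compared with the citation, this route buys transparency and an explicit polynomial dependence $\e=O_r(\sqrt\delta)$, at the price of two further black boxes.

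One caution: Step~1 has to rest on citing that inequality, because the self-contained sketches you offer do not prove it.

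\begin{itemize}
\item \emph{Lagrangian argument.} The minimal-support argument only shows that a maximizer lives on a clique of at most $r$ vertices. That yields $k_r\le (n/r)^r$ and carries no information about $e$.
\item \emph{Zykov fallback.} This fails as stated. For nonadjacent $u,v$, both $k_r$ and $e$ are the averages of their values in $G^{u\to v}$ and $G^{v\to u}$. Since $y\mapsto y^{r/2}$ is convex, this averaging gives no lower bound on the larger of the two ratios, and $k_r/e^{r/2}$ can in fact drop under both symmetrizations. For instance, take $r=3$ and $G=K_3\cup K_1$, with ratio $3^{-3/2}$. Symmetrizing the isolated vertex to a triangle vertex gives $K_4$ minus an edge, with ratio $2\cdot 5^{-3/2}<3^{-3/2}$. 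The reverse symmetrization gives a triangle-free graph. Every nonadjacent pair of $G$ is of this type, so no ratio-nondecreasing step exists.
\item \emph{Neighbourhood induction.} This would need a bound on $\sum_v e(G[N(v)])^{(r-1)/2}$ in terms of $e$. That bound is the real content of the inequality, and the proposal leaves it open.
\end{itemize}
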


The stability result for fractional \(t\)-clique covers follows directly.

\begin{theorem}
  \label{cov:prop:fractional-stability}
Fix \(t\ge2\). For every \(\varepsilon>0\), there are \(\delta>0\) and \(N\) such that
every graph \(G\) on \(n\ge N\) vertices satisfying
\begin{equation*}
\CCstar_t(G)\ge \CC_t(T_{n,t})-\delta n^t
\end{equation*}
also satisfies
\begin{equation*}
\ed(G,T_{n,t})\le \varepsilon n^2.
\end{equation*}
\end{theorem}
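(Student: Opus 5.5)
The plan is to chain Lemma~\ref{cov:lem:few-larger-cliques}, the graph removal lemma (Theorem~\ref{cov:thm:graph-removal}) and Theorem~\ref{cov:thm:maqiu}.

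\emph{Constants.} Given \(\varepsilon>0\), first apply Theorem~\ref{cov:thm:maqiu} with \(r=t\) and error \(\varepsilon/2\); this yields \(\delta_1>0\) and \(N_0\). Next apply Theorem~\ref{cov:thm:graph-removal} with \(F=K_{t+1}\) and edge budget \(\varepsilon_2:=\min\{\varepsilon/2,\ \delta_1/2\}\); this yields \(\delta_2>0\). Finally choose \(\delta\) small enough that
\[
\frac{t^t}{t!}\,\delta<\frac{\delta_2}{2}
\qquad\text{and}\qquad
\delta<\frac{\delta_1}{2},
\]
and take \(N\) large.

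\emph{Step 1: few \(K_{t+1}\)'s.} Suppose \(\CCstar_t(G)\ge \CC_t(T_{n,t})-\delta n^t\). Lemma~\ref{cov:lem:few-larger-cliques} gives
\[
|\mathcal K_{t+1}(G)|\le \frac{t^t}{t!}\,n\cdot\delta n^t+Cn^t\le \delta_2 n^{t+1}
\]
for \(n\) large.

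\emph{Step 2: removal.} The removal lemma deletes at most \(\varepsilon_2 n^2\) edges of \(G\) to produce a \(K_{t+1}\)-free graph \(G'\).

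\emph{Step 3: \(G'\) has nearly Tur\'an many \(K_t\)'s.} We need \(|\mathcal K_t(G')|\ge |\mathcal K_t(T_{n,t})|-\delta_1 n^t\). Since \(T_{n,t}\) has exactly \(\CC_t(T_{n,t})\) copies of \(K_t\) (every clique of \(T_{n,t}\) has at most \(t\) vertices), it suffices to prove two inequalities. The first is
\[
|\mathcal K_t(G')|\ge \CCstar_t(G')\ge \CCstar_t(G)-\varepsilon_2 n^2\cdot n^{t-2}.
\]
Here the left inequality holds because covering each \(K_t\) of \(G'\) by itself is a feasible cover. The right inequality is the fractional analogue of Lemma~\ref{cov:lem:restore-edges}: take a fractional cover of \(G'\), and for each deleted edge add weight \(1\) to every \(K_t\) of \(G\) containing it; there are at most \(n^{t-2}\) such \(K_t\)'s per edge, so this covers \(G\). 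Combining, \(|\mathcal K_t(G')|\ge \CC_t(T_{n,t})-(\delta+\varepsilon_2)n^t\), and \(\delta+\varepsilon_2<\delta_1\).

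\emph{Step 4: conclusion.} Theorem~\ref{cov:thm:maqiu} gives \(\ed(G',T_{n,t})\le \varepsilon n^2/2\). Since \(G\) and \(G'\) differ in at most \(\varepsilon_2 n^2\le\varepsilon n^2/2\) edges, the triangle inequality for edit distance gives \(\ed(G,T_{n,t})\le \varepsilon n^2\).

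The only step needing care is Step 3, namely that deleting few edges lowers \(\CCstar_t\) by \(O(\varepsilon_2 n^t)\) and that \(|\mathcal K_t|\) dominates \(\CCstar_t\). Both are elementary, so this should be routine.
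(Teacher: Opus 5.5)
Your proposal is correct and follows the paper's proof essentially step for step. It uses the same constant hierarchy (Ma--Qiu, then the removal lemma for \(K_{t+1}\), then \(\delta\) chosen via Lemma~\ref{cov:lem:few-larger-cliques}), the same removal to a \(K_{t+1}\)-free subgraph, and the same triangle inequality for edit distance. The only cosmetic difference is in Step 3: the paper counts \(K_t\)'s directly, \(|\mathcal K_t(H)|\ge|\mathcal K_t(G)|-\eta n^t\ge\CCstar_t(G)-\eta n^t\), rather than passing through \(\CCstar_t(G')\) and a fractional version of Lemma~\ref{cov:lem:restore-edges}, and both give the same inequality.
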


\begin{proof}

First apply Theorem~\ref{cov:thm:maqiu} with \(r=t\) and edit distance error \(\frac{\e}{2}\),
let \(\sigma\) and \(N_0\) be the resulting constants. Then apply Theorem~\ref{cov:thm:graph-removal}
to \(K_{t+1}\) with deletion error \(\eta\le\min\{\frac\sigma2,\frac\e2\}\), obtaining a copy threshold \(\tau\).
Let \(C=C(t)\) come from Lemma~\ref{cov:lem:few-larger-cliques}, and choose \(0<\delta\le\min\{\frac\sigma2,t!\tau/(2t^t)\}\).
Finally take \(N\ge N_0\) so large that, for \(n\ge N\), \(C/n\le\tau/2\).

Let \(G\) satisfy the hypotheses.  Lemma~\ref{cov:lem:few-larger-cliques} gives
\begin{equation*}
|\mathcal K_{t+1}(G)|
 \le \frac{t^t}{t!}\delta n^{t+1}+Cn^t
 \le \tau n^{t+1}.
\end{equation*}
The graph removal lemma therefore gives a \(K_{t+1}\)-free spanning subgraph \(H\subseteq G\) obtained by
deleting at most \(\eta n^2\) edges.  Since each deleted edge lies in at most \(n^{t-2}\) copies of \(K_t\),
\begin{equation*}
|\mathcal K_t(H)|
 \ge |\mathcal K_t(G)|-\eta n^t
 \ge \CCstar_t(G)-\eta n^t
 \ge \CC_t(T_{n,t})-(\delta+\eta)n^t\ge|\mathcal K_t(T_{n,t})|-\sigma n^t.
\end{equation*}
Theorem~\ref{cov:thm:maqiu} gives
\begin{equation*}
\ed(H,T_{n,t})\le \frac{\varepsilon}{2}n^2.
\end{equation*}
Since \(G\) and \(H\) differ in at most \(\eta n^2\le \varepsilon n^2/2\) edges,
\begin{equation*}
\ed(G,T_{n,t})\le \varepsilon n^2.
\end{equation*}
\end{proof}

Finally we pull this stability result back to integer \(t\)-clique covers.

\begin{corollary}
  \label{cov:cor:integer-stability}
Fix \(t\ge2\).  For every \(\varepsilon>0\), there are \(\delta>0\) and \(N\) such that
every graph \(G\) on \(n\ge N\) vertices satisfying
\begin{equation*}
\CC_t(G)\ge \CC_t(T_{n,t})-\delta n^t
\end{equation*}
also satisfies
\begin{equation*}
\ed(G,T_{n,t})\le \varepsilon n^2.
\end{equation*}
\end{corollary}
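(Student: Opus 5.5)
The plan is to transfer the integer hypothesis to a fractional one on a nearby graph using Lemma~\ref{cov:lem:nearby-model}. Then Theorem~\ref{cov:prop:fractional-stability} applies to that nearby graph, and the edit distance pays only for the edges deleted on the way.

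First fix the constants. Given \(\varepsilon>0\), apply Theorem~\ref{cov:prop:fractional-stability} with error \(\varepsilon/2\), obtaining \(\delta_1>0\) and \(N_1\). Set \(\eta:=\min\{\delta_1/2,\varepsilon/2\}\), and let \(N(t,\eta)\) be supplied by Lemma~\ref{cov:lem:nearby-model}. Finally take \(\delta:=\delta_1/2\) and \(N:=\max\{N_1,N(t,\eta)\}\).

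Now let \(G\) have \(n\ge N\) vertices and satisfy \(\CC_t(G)\ge \CC_t(T_{n,t})-\delta n^t\). Lemma~\ref{cov:lem:nearby-model} gives a spanning subgraph \(G_0\) with \(|E(G)\setminus E(G_0)|\le\eta n^2\) and \(\CC_t(G)\le\CCstar_t(G_0)+\eta n^t\). Hence
\begin{equation*}
\CCstar_t(G_0)\ge \CC_t(T_{n,t})-(\delta+\eta)n^t\ge \CC_t(T_{n,t})-\delta_1 n^t .
\end{equation*}
Since \(G_0\) has the same \(n\) vertices, Theorem~\ref{cov:prop:fractional-stability} yields \(\ed(G_0,T_{n,t})\le \frac{\varepsilon}{2}n^2\).

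To finish, \(G\) and \(G_0\) live on the same vertex set and differ in at most \(\eta n^2\) edges. Taking the copy of \(T_{n,t}\) that is optimal for \(G_0\), the triangle inequality for symmetric differences gives
\begin{equation*}
\ed(G,T_{n,t})\le \tfrac{\varepsilon}{2}n^2+\eta n^2\le\varepsilon n^2 .
\end{equation*}
There is no real obstacle. The only point needing care is choosing the constants so that both the cover-number loss \(\eta n^t\) and the edge loss \(\eta n^2\) are absorbed, and the choice of \(\eta\) above does both.
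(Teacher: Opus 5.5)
Your proposal is correct and follows the paper's proof essentially verbatim. It uses the same constants \(\eta=\min\{\delta_1/2,\varepsilon/2\}\) and \(\delta=\delta_1/2\), passes to \(G_0\) via Lemma~\ref{cov:lem:nearby-model}, applies Theorem~\ref{cov:prop:fractional-stability} with error \(\varepsilon/2\), and absorbs the at most \(\eta n^2\) deleted edges by the triangle inequality.
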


\begin{proof}
Let \(\delta_0,N_0\) be supplied by Theorem~\ref{cov:prop:fractional-stability} with error \(\varepsilon/2\). 
Set \(\eta=\min\{\delta_0/2,\varepsilon/2\}\), \(\delta=\delta_0/2\), and take
\(N\) above \(N_0\) and the threshold in Lemma~\ref{cov:lem:nearby-model}
for \(\eta\).  For the resulting subgraph \(G_0\subseteq G\),
\begin{equation*}
\CCstar_t(G_0)
 \ge \CC_t(G)-\eta n^t
 \ge \CC_t(T_{n,t})-(\delta+\eta)n^t
 \ge \CC_t(T_{n,t})-\delta_0 n^t.
\end{equation*}
Thus Theorem~\ref{cov:prop:fractional-stability} gives \(\ed(G_0,T_{n,t})\le\varepsilon n^2/2\), while
\(|E(G)\setminus E(G_0)|\le\eta n^2\le\varepsilon n^2/2\) by Lemma~\ref{cov:lem:nearby-model}. This proves the claim.
\end{proof}

\section{Proof of Theorem~\ref{cov:thm:main}}\label{cov:sec:transversal matching}

For an ordered \(t\)-partition \(\mathcal P=(V_1,\ldots,V_t)\) of \(V(G)\) (empty classes are permitted), define
\begin{equation*}
D_G(\mathcal P):=
\sum_{i=1}^t e(G[V_i])
+
\sum_{1\le i<j\le t}\bigl(|V_i||V_j|-e_G(V_i,V_j)\bigr).
\end{equation*}
This is the number of edge changes required to turn \(G\) into the complete \(t\)-partite graph with parts \(V_1,\ldots,V_t\).
The next lemma finds a good \(t\)-partition \((V_1,\ldots,V_t)\)
when \(\ed(G,T_{n,t})\) is small, such that the size of each part is roughly \(n/t\).
Denote the set of vertices in \(V_i\) with large neighborhoods inside \(V_i\) by \(X_i\).
The partition also ensures that the total size of all \(X_i\)'s is small and that each vertex
in each \(X_i\) also has many neighbors in each other \(V_j\).

\begin{lemma}\label{cov:lem:edit-partition}
Fix \(\theta>0\). Let \(G\) be an \(n\)-vertex graph satisfying \(\ed(G,T_{n,t})\le \theta n^2\),
and let \(\mathcal P=(V_1,\ldots,V_t)\) minimize \(D_G(\mathcal P)\) over all \(t\)-partitions.  Then
\begin{equation}\label{cov:eq:part-variance}
D_G(\mathcal P)\le \theta n^2, \qquad\sum_{i=1}^t\left(|V_i|-\frac nt\right)^2
 \le 4\theta n^2+\frac t4.
\end{equation}
Fix \(\gamma>0\) and define
\begin{equation*}
X_i:=\{v\in V_i:d(v,V_i)>\gamma n\},
\qquad
U_i:=V_i\setminus X_i,
\qquad
X:=\bigcup_iX_i.
\end{equation*}
Then
\begin{align}
  |X|&\le \frac{2\theta}{\gamma}n,\label{cov:eq:exceptional-size-quantitative}\\
  d(u,V_i)&\le\gamma n &&\text{for\ } i\in[t],\ u\in U_i,\notag\\
  d(x,V_j)& \ge \gamma n-\sqrt{4\theta n^2+\frac t4} &&\text{for\ }i\in[t],\ x\in X_i \text{\ and\ } j\ne i.\label{cov:eq:exceptional-cross-degree-quantitative}
\end{align}
\end{lemma}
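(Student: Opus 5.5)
The plan is to establish the four assertions in order, using only the minimality of \(\mathcal P\) and double counting.

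\textbf{Bound on \(D_G(\mathcal P)\).} Let \(H'\) be a copy of \(T_{n,t}\) on \(V(G)\) attaining \(\ed(G,T_{n,t})\), and let its parts form a \(t\)-partition \(\mathcal P'\). Then \(D_G(\mathcal P')=|E(G)\triangle E(H')|\le\theta n^2\), and minimality of \(\mathcal P\) gives \(D_G(\mathcal P)\le\theta n^2\).

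\textbf{Part sizes.} Since every \(G\)-edge inside a part of \(\mathcal P\) or missing between parts is counted in \(D_G(\mathcal P)\), we have
\[
|E(G)\triangle E(K_{\mathcal P})|=D_G(\mathcal P),
\]
where \(K_{\mathcal P}\) is the complete \(t\)-partite graph on \(\mathcal P\). By the triangle inequality,
\[
|e(K_{\mathcal P})-e(T_{n,t})|\le D_G(\mathcal P)+\ed(G,T_{n,t})\le 2\theta n^2 .
\]
Now use the identity
\[
e(K_{\mathcal P})=\tfrac12\Bigl(n^2-\sum_i|V_i|^2\Bigr)=\tfrac12\Bigl(n^2-\tfrac{n^2}{t}\Bigr)-\tfrac12\sum_i\Bigl(|V_i|-\tfrac nt\Bigr)^2 .
\]
For \(T_{n,t}\), the corresponding sum of squared deviations is at most \(t/4\), since each deviation has absolute value at most \(1/2\). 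Subtracting the two identities gives
\[
\sum_i\Bigl(|V_i|-\tfrac nt\Bigr)^2\le 2\cdot 2\theta n^2+\tfrac t4,
\]
which is the second half of \eqref{cov:eq:part-variance}. This step needs care with constants, but it is routine.

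\textbf{Size of \(X\).} We have \(\sum_i\sum_{v\in V_i}d(v,V_i)=2\sum_i e(G[V_i])\le 2D_G(\mathcal P)\le 2\theta n^2\). Each \(v\in X\) contributes more than \(\gamma n\) to this sum, so \(|X|\le 2\theta n/\gamma\), which is \eqref{cov:eq:exceptional-size-quantitative}. The bound \(d(u,V_i)\le\gamma n\) for \(u\in U_i\) holds by definition.

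\textbf{Cross degrees of \(X_i\) (the main step).} Here minimality is used locally. For \(x\in X_i\) and \(j\ne i\), move \(x\) from \(V_i\) to \(V_j\). The change in \(D_G\) is
\[
\bigl[d(x,V_j)-d(x,V_i)\bigr]+\bigl[(|V_i|-1-d(x,V_i))-(|V_j|-d(x,V_j))\bigr].
\]
The first bracket accounts for internal edges gained and lost. The second accounts for missing crossing pairs: \(x\) now misses \(V_i\setminus\{x\}\) minus its neighbours, rather than missing \(V_j\) minus its neighbours. Minimality forces this change to be nonnegative, so
\[
2d(x,V_j)\ge 2d(x,V_i)+|V_j|-|V_i|+1 .
\]
By the variance bound, \(\bigl||V_i|-|V_j|\bigr|\le 2\sqrt{4\theta n^2+t/4}\) (each deviation is at most the square root of the sum). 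Together with \(d(x,V_i)>\gamma n\), this yields
\[
d(x,V_j)\ge\gamma n-\sqrt{4\theta n^2+t/4},
\]
which is \eqref{cov:eq:exceptional-cross-degree-quantitative}. The obstacle is getting this single-vertex move computation right with the correct signs.
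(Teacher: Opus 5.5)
Your proposal is correct and follows the paper's proof essentially step for step. The only cosmetic difference is in the part-size bound: you get \(|e(K_{\mathcal P})-e(T_{n,t})|\le 2\theta n^2\) from a triangle inequality on symmetric differences, while the paper expands \(\sum_{i<j}|V_i||V_j|\) algebraically, and both give the same estimate.

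One justification is wrong, though the conclusion is fine. If \(b_i\) are the balanced part sizes of \(T_{n,t}\), the deviations \(|b_i-n/t|\) are not always at most \(1/2\): for \(t=3\), \(n=4\) the part of size \(2\) deviates by \(2/3\). Instead, write \(n=qt+r\) with \(0\le r<t\); this gives \(\sum_i(b_i-n/t)^2=r(t-r)/t\le t/4\) exactly, so your bound \(4\theta n^2+t/4\) still holds.
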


\begin{proof}
Choose a copy of \(T_{n,t}\) on \(V(G)\) whose edit distance from \(G\) is at most \(\theta n^2\).
Its balanced \(t\)-partition \(\mathcal P'\) has \(D_G(\mathcal P')\) at most \(\theta n^2\).
Hence the partition \(\mathcal P=(V_1,\ldots,V_t)\) minimizing \(D_G\) satisfies \(D_G(\mathcal P)\le\theta n^2\).
Let \(b_1,\ldots,b_t\) be the balanced part sizes of \(T_{n,t}\), then
\begin{equation*}
  \begin{aligned}
    \sum_{i=1}^t|V_i|^2-\sum_{i=1}^t b_i^2 &= 2\left(e(T_{n,t})-\sum_{1\le i<j\le t}|V_i||V_j|\right)\\
    &= 2\left[e(T_{n,t})-\left(D_G(\mathcal P)-2\sum_{i=1}^t e(G[V_i])+e(G)\right)\right]\\
    &= 2\left[\left(e(T_{n,t})-e(G)\right)-D_G(\mathcal P)+2\sum_{i=1}^t e(G[V_i])\right]\\
    &\le 2\left(\theta n^2-D_G(\mathcal P)+2D_G(\mathcal P)\right)\\
    &\le 4\theta n^2.
  \end{aligned}
\end{equation*}
Moreover, let \(r\) be the residue of \(n\) modulo \(t\), then
\begin{equation*}
\sum_{i=1}^t b_i^2-\frac{n^2}{t}
 =\frac{r(t-r)}{t}\le\frac t4,
\end{equation*}
Thus
\begin{equation*}
  \begin{aligned}
    \sum_{i=1}^t\left(|V_i|-\frac nt\right)^2 &= \sum_{i=1}^t\left(|V_i|^2-b_i^2+b_i^2+\frac{n^2}{t^2}-2\frac{n}{t}|V_i|\right)\\
    &= \left(\sum_{i=1}^t|V_i|^2-\sum_{i=1}^t b_i^2\right)+\left(\sum_{i=1}^t b_i^2-\frac{n^2}{t}\right)\\
    &\le 4\theta n^2+\frac{t}{4}.
  \end{aligned}
\end{equation*}
This is \eqref{cov:eq:part-variance}.

Every vertex in \(X\) has more than \(\gamma n\) neighbors in its own part, while
\begin{equation*}
\sum_{i=1}^t\sum_{v\in V_i}d(v,X_i)\le2\sum_{i=1}^t e(G[V_i])\le2\theta n^2.
\end{equation*}
This proves \eqref{cov:eq:exceptional-size-quantitative}.  The degree bound on \(U_i\) follows from its definition.

Finally, by moving some \(x\in V_i\) to \(V_j\), the change in the partition edit cost is
\begin{equation*}
2\left(d(x,V_j)-d(x,V_i)\right)+|V_i|-|V_j|-1.
\end{equation*}
Minimality makes this quantity nonnegative.  Hence
\begin{equation*}
d(x,V_j)
 \ge d(x,V_i)-\frac{|V_i|-|V_j|}{2}+\frac12.
\end{equation*}
By \eqref{cov:eq:part-variance},
\begin{equation*}
\frac{|V_i|-|V_j|}{2}
 \le \sqrt{4\theta n^2+\frac t4}.
\end{equation*}
Since \(d(x,V_i)>\gamma n\), this gives \eqref{cov:eq:exceptional-cross-degree-quantitative}.
\end{proof}

For a graph \(G\) with a \(t\)-partition \(V(G)=V_1\dotcupunion\cdots\dotcupunion V_t\), let
\(\mathfrak T:=V_1\times\cdots\times V_t\) be the set of \(t\)-tuples. Recall that a \(t\)-set is
\emph{transversal} if it contains exactly one vertex from each part. We identify each tuple
in \(\mathfrak T\) with its underlying transversal set, for simplicity we may call a (non)-transversal
set a (non)-transversal. Let \(\mathcal A\subseteq\mathfrak T\) be the set of transversals that induce
\(K_t\).  A transversal in \(\mathfrak T\setminus\mathcal A\) is called \emph{missing}.

Let each transversal \(K_t\) be covered by itself. A non-transversal \(K_t\) could either be covered
by itself or by a clique enlarged from some transversal \(K_t\). In the first case, its contribution
to \(\CC_t(G)\) could be compensated by some missing transversal \(t\)-set.
We formalize this as follows: a transversal \(t\)-set \(T\) is \emph{admissible} or a \emph{candidate}
for a non-transversal copy \(S\) of \(K_t\) if either \(T\notin\mathcal A\), or \(T\in\mathcal A\) and \(S\cup T\) is a clique.
We will show that, given a good \(t\)-partition as constructed in Lemma~\ref{cov:lem:edit-partition},
we could assign distinct admissible transversals to all non-transversal copies of \(K_t\) so that
the total cost used to cover all \(K_t\)'s does not exceed \(\prod_{i=1}^t|V_i|\).

\begin{theorem}\label{cov:thm:transversal-matching}
Fix \(t\ge 2\) and \(0<\alpha\le 1/t\). There is \(\gamma=\gamma(t,\alpha)\in(0,1/2)\) such that, for every
\(\kappa>0\), there are \(\xi=\xi(t,\alpha,\kappa)\in(0,1/2)\) and \(N=N(t,\alpha,\kappa)\) with the following property.

Let \(G\) be an \(n\)-vertex graph with \(n\ge N\). Suppose that
\begin{equation*}
V(G)=V_1\dotcupunion\cdots\dotcupunion V_t,
\qquad |V_i|\ge \alpha n\quad\text{for }i\in[t].
\end{equation*}
Write $V_i=U_i\dotcupunion X_i$ and $X=\bigcup_iX_i$. Suppose \(|X|\le \xi n\), and that for \(i\in[t]\),
\begin{equation*}
  \begin{aligned}
    d(u,V_i)&\le \gamma n \qquad&&\text{for }u\in U_i,\\
    d(x,V_j)&\ge \kappa n \qquad&&\text{for }x\in X_i\text{ and }j\ne i.
  \end{aligned}
\end{equation*}
Then
\begin{equation*}
\CC_t(G)\le\prod_{i=1}^t|V_i|.
\end{equation*}
If equality holds, then $G$ is the complete $t$-partite graph with parts $V_1,\ldots,V_t$.
\end{theorem}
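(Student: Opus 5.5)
The plan follows the sketch given in the introduction. We build an explicit cover, and we charge every non-transversal \(K_t\) to its own admissible transversal through a system of distinct representatives. Concretely, we seek an injection \(\phi\) from the set \(\mathcal S\) of non-transversal copies of \(K_t\) into \(\mathfrak T\) with \(\phi(S)\) admissible for \(S\).

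Given such a \(\phi\), the cover is defined as follows.
\begin{itemize}
\item For each \(T\in\mathcal A\) not in the image of \(\phi\), take \(T\) itself.
\item For each \(T\in\mathcal A\) in the image, take the clique \(\phi^{-1}(T)\cup T\).
\item For each \(S\) with \(\phi(S)\notin\mathcal A\), take \(S\) itself.
\end{itemize}
Every \(K_t\) is covered. The number of cliques used is \(|\mathcal A|+|\{S:\phi(S)\notin\mathcal A\}|\le|\mathcal A|+|\mathfrak T\setminus\mathcal A|=\prod_i|V_i|\). For equality, if \(G\) is not complete \(t\)-partite, we will show there is slack. Either some missing transversal is not in the image of \(\phi\), or some clique \(S\cup T\) also absorbs a second non-transversal \(K_t\), or \(\mathcal S=\varnothing\). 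In the last case, an internal edge or a missing cross pair immediately produces a missing transversal, hence slack. So the whole proof reduces to a Hall-type matching statement with a little room to spare.

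To verify Hall's condition, we classify each \(S\in\mathcal S\) by its type. A non-transversal \(S\) has a repeated part, say two vertices \(u,v\in V_i\). We split into two cases according to whether \(S\) meets \(X\).

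\textbf{Case 1: \(S\subseteq U:=\bigcup U_i\).} Then \(u,v\) are adjacent low-internal-degree vertices in the same part. We look at transversals through \(u\) and through \(v\) that agree outside \(V_i\), and we use a double counting argument. The pairs \((T_u,T_v)\) of transversals differing only in the \(V_i\)-coordinate (with \(u\) versus \(v\)) can both be in \(\mathcal A\) and extend \(S\) to a clique. Otherwise at least one of them is missing. Since \(d(u,V_i)\le\gamma n\), the number of such \(S\) containing a fixed internal edge \(uv\) is at most roughly \((\gamma n)^{t-2}\cdot\)const. Each internal edge \(uv\) "owns" about \(\prod_{j\ne i}|V_j|\ge(\alpha n)^{t-1}\) transversal pairs. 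So a fractional assignment spreading each \(S\) uniformly over its admissible transversals gives each transversal load \(O(\gamma)\ll1\), provided \(\gamma\) is small relative to \(\alpha\) and \(t\). We will implement this by a weighted Hall argument (a fractional matching with loads \(\le1\) in a bipartite graph implies an integral matching saturating \(\mathcal S\)).

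\textbf{Case 2: \(S\) meets \(X\).} Here we use \(d(x,V_j)\ge\kappa n\) for every \(j\ne i\), together with \(|X|\le\xi n\) with \(\xi\ll\kappa\). A vertex \(x\in X_i\) has many neighbours in every other part. So \(S\) has many candidate transversals obtained by replacing coordinates, and many missing transversals arise from \(x\)'s non-neighbours. The number of such \(S\) is at most \(|X|n^{t-1}\le\xi n^t\). Each has at least \(c(\kappa,\alpha)n^{t-1}\) admissible transversals, while each transversal is admissible for at most \(O(|X| n^{t-2})\)-ish such \(S\) per vertex choice. So the loads are again \(O(\xi/\kappa^{t})\ll1\).

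The main obstacle is establishing the lower bound on the number of admissible transversals for each \(S\) uniformly, in both cases. The needed quantity is transversals \(T\) that are either missing or complete to \(S\) with \(T\in\mathcal A\). Some transversals satisfy neither condition: they lie in \(\mathcal A\) but are not complete to \(S\). I would handle this by fixing one vertex \(w\) of \(S\) in the repeated part and choosing \(T\) to agree with \(S\) on the coordinates it already occupies. A failure to be complete to \(S\) then forces a nonadjacency inside \(T\cup S\). Swapping coordinates converts it into a missing transversal. This makes every "bad" \(T\) map boundedly-to-one to a missing transversal admissible for \(S\). Combining the two cases (the loads add, still \(<1\)) yields the matching. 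Tracking one unit of unused slack then gives the equality characterization.
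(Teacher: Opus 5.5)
Your architecture matches the paper's. You use an injection $\phi$ from non-transversal copies of $K_t$ into admissible transversals and the count $|\mathcal A|+|\mathfrak T\setminus\mathcal A|$. In your last paragraph you also have the representative-plus-completion construction, which swaps in a nonrepresentative vertex to get a missing transversal when $S\cup T_0(y)$ is not a clique. Using fractional loads that add over patterns is a legitimate substitute for the paper's random split of $\mathfrak T$ among patterns.

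But the Hall verification as written does not go through. A copy $S$ with $d$ empty parts has in general only $\Theta(n^d)$ admissible transversals. Take $G$ complete $t$-partite plus one edge $ux$ inside $V_1$, with $X_1=\{x\}$; this satisfies the hypotheses when $\kappa\le\alpha$. Then $S=\{u,x\}\cup\{s_k\}$, with one vertex $s_k$ in each of $t-2$ further parts, has exactly $2|V_j|$ admissible transversals, where $j$ is its empty part. So your Case~2 bound $c(\kappa,\alpha)n^{t-1}$ fails for $t\ge3$. Moreover, a missing transversal is admissible for \emph{every} $S$. Hence ``$O(|X|n^{t-2})$ copies per transversal'' is false, and uniform spreading over all admissible transversals can overload the missing ones.

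The comparison must be made pattern by pattern at scale $n^d$, inside a fixed candidate family. Each copy has at least $\prod_{j\in B}|V_j|\ge\alpha^dn^d$ candidates, against $O(\sigma n^d)$ copies per candidate with $\sigma=\gamma+\xi$. This requires choosing the representative in $V_i$ inside $U_i$ whenever $S\cap U_i\ne\varnothing$. Your split ``$S\subseteq U$'' versus ``$S$ meets $X$'' is not the relevant one: most copies meeting $X$ are handled by this generic count.

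The missing idea is the one configuration where the generic count breaks: $d=1$ and $S\cap V_i=\{u,x\}$ with $u\in U_i$, $x\in X_i$ (the paper's \emph{singular} copies). A candidate through $x$ can serve every copy $\{u,x\}\cup Y$ with $u\in N(x,V_i)$, possibly $\Theta(n)$ copies. This holds both for a swapped candidate and for one of your ``missing transversals from non-neighbours of $x$''. Each such copy has only $\Theta(n)$ completions, so the load is of order $d(x,V_i)/|V_j|$, i.e.\ $\Theta(1)$ rather than small. If you rely on transversals through $x$, Hall itself can fail once $d(x,V_i)>|V_j|$. Also, the hypothesis gives $x$ many \emph{neighbours} in $V_j$; there may be no missing transversal through $x$ at all.

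The fix is to keep $u$ and complete only with $y\in N(x,V_j)$. If $T=\{u,y\}\cup(S\setminus V_i)$ is a clique, then $y\sim x$ as well, so $S\cup T$ is a clique and $T$ is automatically admissible. There are at least $\kappa n$ such $T$. Each $T$ determines $S$ up to the choice of the repeated and empty parts and of $x\in X_i$, giving load at most $t(t-1)\xi/\kappa$. This is exactly where $d(x,V_j)\ge\kappa n$ and $\xi\ll\kappa$ enter. For nonsingular copies, the subcase $z\in X_i$, $b_i\in U_i$ forces $d\ge2$ and costs only $\sigma n^d$.

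Finally, your equality trichotomy is unjustified for an arbitrary $\phi$. Absorbing a second non-transversal $S'$ saves nothing when $\phi(S')\in\mathcal A$. Instead, build $\phi$ to avoid a reserved set $R$ with $|R|\le2$:
\begin{itemize}
\item if a missing transversal exists, reserve it;
\item otherwise, take an internal edge $uv$ and one vertex from each other part to form a $K_{t+1}$, and reserve its two transversals $T_u,T_v$, covering both by that single $K_{t+1}$.
\end{itemize}
The slack in your loads makes reserving $R$ harmless.
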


\begin{proof}
Suppose we have \(G\) as stated in the theorem, leave \(\gamma, \xi, N\) to be decided. 
Set \(\sigma:=\gamma+\xi<1\). Fix a linear order on every part \(V_i\).
Fix a non-transversal copy of \(K_t\) \(S\), put
\begin{equation*}
r_i:=|S\cap V_i|,
\qquad
A:=\{i:r_i>0\},
\qquad
B:=\{i:r_i=0\},
\qquad
d:=|B|.
\end{equation*}
Let \(P:=(r_1,\cdots,r_t)\) be its multiplicity pattern. Since \(|S|=t\),
\begin{equation}\label{cov:eq:excess-identity}
\sum_{i\in A}(r_i-1)=d.
\end{equation}
For every \(i\in A\), choose a canonical representative \(b_i\in S\cap V_i\):
take the first vertex of \(S\cap U_i\) if this set is nonempty, and otherwise
take the first vertex of \(S\cap X_i\). The other vertices of \(S\cap V_i\) are called \emph{nonrepresentative vertices}.

Given the canonical representatives \(b_i\)'s, the nonrepresentative vertices have at most \((\sigma n)^d\)
possible choices. Indeed, if \(b_i\in U_i\), every nonrepresentative vertex in \(V_i\) lies in \(N(b_i,V_i)\),
which has size at most \(\gamma n\). If \(b_i\in X_i\), then \(S\cap U_i=\varnothing\) by the definition of \(b_i\),
so every nonrepresentative vertex in \(V_i\) lies in \(X_i\), which has size at most \(\xi n\).
And \eqref{cov:eq:excess-identity} says there are exactly \(d\) nonrepresentative vertices to be decided.

We call \(S\) \emph{singular} if \(d=1\) and its unique repeated part \(V_i\) satisfies
\begin{equation*}
S\cap V_i=\{u,x\},
\qquad u\in U_i,
\quad x\in X_i.
\end{equation*}
Let \(V_j\) be the unique missing part. For every \(y\in N(x,V_j)\), form the transversal that
contains \(u\) in \(V_i\), \(y\) in \(V_j\), and the unique vertex of \(S\) in every other part.
This transversal is admissible for \(S\): if it is a clique, its union with \(S\) is a clique;
otherwise it is missing. Thus every singular \(S\) has at least \(\kappa n\) candidates.

A fixed transversal is produced as a candidate by this construction for at most \(t(t-1)|X|\) singular copies.
After the repeated part and the missing part are fixed, the transversal determines \(u\), the vertex chosen
in the missing part, and all singleton vertices of \(S\); only \(x\in X_i\) remains undetermined.

We now define candidates for every nonsingular \(S\). For each completion vector \(y=(y_j)_{j\in B}\) in \(\prod_{j\in B}V_j\), let
\begin{equation*}
T_0(y):=\{b_i:i\in A\}\cup\{y_j:j\in B\}.
\end{equation*}
If \(T_0(y)\) is missing, take \(T_0(y)\) as the candidate. If \(S\cup T_0(y)\) is a clique,
again take \(T_0(y)\). Otherwise both \(S\) and \(T_0(y)\) are cliques but \(S\cup T_0(y)\) is not.
Thus there exist a nonrepresentative vertex \(z\in S\cap V_i\) and a completion vertex \(y_j\) such
that \(zy_j\notin E(G)\). Order the indexed pairs lexicographically using the part indices and the
fixed vertex orders. Choose the first such pair, replace \(b_i\) by \(z\), and take the resulting transversal.
It contains the nonedge \(zy_j\), thus is missing and hence admissible.

Different completion vectors give different candidate transversals.
Thus every nonsingular \(S\) has at least \(\prod_{j\in B}|V_j|\ge \alpha^d n^d\) candidates.

Given a nonsingular multiplicity pattern \(P=(r_1,\ldots,r_t)\) and a transversal \(T\),
if \(T\) serves as \(T_0(y)\) in the above construction,
then \(T\) determines all canonical representatives, leaving at most \((\sigma n)^d\) choices.
Otherwise it replaces some \(b_i\) by a nonrepresentative vertex \(z\), and the pair consisting
of \(i\) and the missing part containing the selected completion vertex has at most \(t(t-1)\) choices.
The transversal \(T\) determines all other representatives.
If \(z\in U_i\), there are at most \(\gamma n\) choices for \(b_i\); if \(z,b_i\in X_i\), there are
at most \(\xi n\); and the remaining nonrepresentative vertices have at most \((\sigma n)^{d-1}\) choices.
In the only remaining case, \(z\in X_i\) and \(b_i\in U_i\). Here there are at most \(n\) choices for \(b_i\),
but nonsingularity implies \(d\ge2\), so this case contributes at most \(\sigma n^d\).
Consequently there is a constant \(C=C(t)\) such that \(T\) is a candidate for at most \(C\sigma n^d\)
nonsingular copies of pattern \(P\) by our construction.

We construct a bipartite \emph{candidate graph} \(H\) as below: its left side is the non-transversal \(K_t\)'s,
grouped by their multiplicity patterns plus one additional pattern for all singular non-transversals;
its right side is \(\mathfrak T\), there is an edge between a non-transversal \(K_t\) \(S\) and a transversal \(T\)
if and only if \(T\) is a candidate for \(S\) obtained through the above construction.
Let \(L=L(t)\) be the number of multiplicity patterns plus one.
Fix \(R\subseteq\mathfrak T\) with \(|R|\le2\). We would like non-transversals of different patterns to use
disjoint classes of candidates. For this purpose, we assign the members of \(\mathfrak T\setminus R\) independently
and uniformly to the \(L\) multiplicity patterns. Later on, for a non-transversal \(K_t\) \(S\) of pattern \(P\),
we only use candidates assigned to \(P\). Specifically, each of \(S\)'s original candidates outside \(R\) is preserved
with probability \(1/L\) independently. Delete the edges between each non-transversal \(K_t\) and
transversals not assigned to its pattern. Denote by \(\mathbf H\) the resulting bipartite graph. Then
\begin{equation*}
\mathbb E|N_{\mathbf H}(S)|=\frac{1}{L}\left|N_{H}(S)\setminus R\right|
\end{equation*}
for any non-transversal \(K_t\) \(S\). Recall that preceding analysis gives
\begin{equation*}
  |N_H(S)|\ge
  \begin{cases}
    \kappa n & S  \mathrm{\ is\ singular},\\
    \alpha^dn^d & S \mathrm{\ is\ nonsingular},
  \end{cases}
\end{equation*}
where \(d\ge1\) when \(S\) is nonsingular. Since \(R\) rules out at most \(2\) candidates, a Chernoff bound gives
\begin{equation*}
  \Pr\left(|N_{\mathbf{H}}(S)|<\frac{|N_H(S)|}{4L}\right)\le e^{-cn}
\end{equation*}
for large enough \(n\), where \(c=\min\{\kappa,\alpha^t\}/(16L)>0\). There are fewer than \(n^t\) non-transversal
\(K_t\)'s and \(n^te^{-cn}<1\) for large enough \(n\). Consequently, there is an assignment such that each
non-transversal \(K_t\) \(S\) retains at least \(\frac{1}{4L}\) of its original candidates.
Denote the resulting bipartite graph under this assignment by \(\widetilde{H}\). By our construction,
\(\widetilde{H}\) is the union of \(L\) disjoint bipartite subgraphs \(\widetilde{H}_P\), one for each pattern,
plus the isolated set \(R\). By preceding analysis, for each transversal \(T\) on the right side assigned to pattern \(P\), we have
\begin{equation*}
  |N_{\widetilde{H}_P}(T)|\le
  \begin{cases}
    t(t-1)\xi n & P  \mathrm{\ is\ the\ singular\ pattern},\\
    C\sigma n^d & P \mathrm{\ is\ nonsingular}.
  \end{cases}
\end{equation*}

Given \(t\ge 2\) and \(0<\alpha\le 1/t\), choose
\begin{equation*}
\gamma:=\min\left\{\frac13,\frac{\alpha^t}{8CL}\right\}.
\end{equation*}
Given \(\kappa>0\), choose \(\xi>0\) so small that
\begin{equation*}
\xi\le\gamma,
\qquad
t(t-1)\xi\le\frac{\kappa}{4L}.
\end{equation*}
Then on every \(\widetilde{H}_P\) we have minimum left degree at least as large as maximum right degree.
Write the two degrees as \(\delta_{\widetilde{H}_P}^{\mathrm{left}}\) and \(\Delta_{\widetilde{H}_P}^{\mathrm{right}}\) respectively.
For every set \(\mathcal S\) of left vertices in \(\widetilde{H}_P\) we have
\begin{equation*}
\delta_{\widetilde{H}_P}^{\mathrm{left}}|\mathcal S|
 \le e(\mathcal S,N(\mathcal S))
 \le \Delta_{\widetilde{H}_P}^{\mathrm{right}}|N(\mathcal S)|,
\end{equation*}
so Hall's condition holds. Combining all the resulting matchings together gives an injection
\begin{equation}\label{cov:eq:transversal-injection}
\phi_R:\mathcal K\longrightarrow\mathfrak T\setminus R,
\end{equation}
where \(\mathcal K\) is the family of all non-transversal copies of \(K_t\) and \(\phi_R(S)\) is admissible for \(S\).

Apply \eqref{cov:eq:transversal-injection} with \(R=\varnothing\).
Start with all actual transversal copies, namely \(\mathcal A\). If \(\phi_\varnothing(S)\) is missing,
add \(S\) to the cover. If \(\phi_\varnothing(S)\in\mathcal A\), replace \(\phi_\varnothing(S)\) by
the clique \(S\cup\phi_\varnothing(S)\). Since \(\phi_\varnothing\) is injective,
at most \(|\mathfrak T\setminus\mathcal A|\) cliques are added for missing transversals and
each actual transversal \(K_t\) is enlarged at most once. Therefore
\begin{equation*}
\CC_t(G)\le |\mathcal A|+|\mathfrak T\setminus\mathcal A|
 =|\mathfrak T|=\prod_{i=1}^t|V_i|.
\end{equation*}

Suppose equality holds. If a missing transversal \(T\) exists, use \(\phi_{\{T\}}\)
from \eqref{cov:eq:transversal-injection}. Then at most \(|\mathfrak T\setminus\mathcal A|-1\)
non-transversal copies are paid for by missing transversals, so the cover has fewer than \(|\mathfrak T|\) members.
Hence every transversal is an actual \(K_t\), and every pair of distinct parts is complete.

If some \(V_i\) contains an edge \(uv\), choose one vertex from every other part. Together with \(u,v\),
these vertices form a \(K_{t+1}\). Let \(T_u,T_v\) be the two transversal copies of \(K_t\) contained in this clique.
Start with \(\mathcal A\setminus\{T_u,T_v\}\), apply \eqref{cov:eq:transversal-injection} with
\(R=\left\{T_u,T_v\right\}\) and add the \(K_{t+1}\) containing \(T_u,T_v\) we just selected.
This gives a cover with at most \(|\mathfrak T|-1\) members, a contradiction.
Thus every part is independent, and \(G\) is complete \(t\)-partite.
\end{proof}

Lemma~\ref{cov:lem:edit-partition} and Theorem~\ref{cov:thm:transversal-matching} together prove Theorem~\ref{cov:thm:main}.

\begin{proof}[Proof of Theorem~\ref{cov:thm:main}]
Set \(\alpha=1/(2t)\) and let \(\gamma=\gamma(t,\alpha)\) be given by Theorem~\ref{cov:thm:transversal-matching}.
Put \(\kappa=\gamma/2\), and let \(\xi=\xi(t,\alpha,\kappa)\) and \(N_1=N(t,\alpha,\kappa)\) be supplied by
Theorem~\ref{cov:thm:transversal-matching}. Choose \(\theta>0\) so small that
\begin{equation*}
\frac{2\theta}{\gamma}\le\xi,
\qquad
2\sqrt{\theta}<\min\left\{\frac1{2t},\frac{\gamma}{2}\right\}.
\end{equation*}
Let \(\delta>0\) and \(N_2\) be supplied by Corollary~\ref{cov:cor:integer-stability} with error \(\theta\),
and choose \(n_0=n_0(t)\ge\max\{N_1,N_2\}\) so large that
\begin{equation*}
\sqrt{4\theta+\frac{t}{4n_0^2}}
 \le \min\left\{\frac1{2t},\frac{\gamma}{2}\right\}.
\end{equation*}

Let \(G\) be an \(n\)-vertex graph with \(n\ge n_0(t)\).  If \(\CC_t(G)<\CC_t(T_{n,t})-\delta n^t\),
there is nothing to prove.  Otherwise Corollary~\ref{cov:cor:integer-stability} gives \(\ed(G,T_{n,t})\le\theta n^2\).
Choose a partition \(\mathcal P=(V_1,\ldots,V_t)\) minimizing \(D_G\),
and define \(X_i,U_i,X\) as in Lemma~\ref{cov:lem:edit-partition}. The lemma and the choices above give
\begin{align*}
|V_i|&\ge\frac nt-\sqrt{4\theta n^2+\frac t4}\ge\alpha n,\\
|X|&\le\frac{2\theta}{\gamma}n\le\xi n,\\
d(u,V_i)&\le\gamma n, && \mathrm{for\ } i\in [t],\ u\in U_i,\\
d(x,V_j)&\ge\gamma n-\sqrt{4\theta n^2+\frac t4}\ge\kappa n,
 && \mathrm{for\ } i\in [t],\ x\in X_i,\ j\ne i.
\end{align*}
Theorem~\ref{cov:thm:transversal-matching} therefore applies and gives
\begin{equation*}
\CC_t(G)
 \le \prod_{i=1}^t|V_i|
 \le \CC_t(T_{n,t}).
\end{equation*}
This proves the upper bound.

Suppose now that \(\CC_t(G)=\CC_t(T_{n,t})\). Equality must hold in both inequalities above. 
Equality in Theorem~\ref{cov:thm:transversal-matching} forces \(G\) to be complete \(t\)-partite with parts \(V_1,\ldots,V_t\),
and equality in the product bound forces the part sizes to differ by at most one. Hence \(G\cong T_{n,t}\).
\end{proof}

\section*{Acknowledgements}
The author would like to thank Jie Ma for bringing this problem to his attention and for helpful discussions.

\section*{Declaration on the Use of AI}
The central proof idea in this manuscript was first generated with the assistance of GPT 5.6 Sol Ultra. The manuscript was drafted with the assistance of Codex (using GPT 5.6 Sol Ultra). All AI-assisted outputs were independently checked and refined by the author, who takes full responsibility for the correctness and originality of the paper.

\printbibliography

\end{document}